\newtheorem{theorem}{Theorem}[section]
\newtheorem{question}[theorem]{Question}
\newtheorem{lemma}[theorem]{Lemma}
\newtheorem{proposition}[theorem]{Proposition}
\newtheorem{clm}[theorem]{Claim}
\newtheorem{fact}[theorem]{Fact}
\newtheorem{remark}[theorem]{Remark}
\newtheorem{construction}[theorem]{Construction}
\begin{document}

\title{Stability of intersecting families\thanks{This work is supported by  NSFC (Grant No. 11931002).  E-mail addresses: 1060393815@qq.com (Yang Huang),
ypeng1@hnu.edu.cn (Yuejian Peng, corresponding author).}}

\author{Yang Huang, Yuejian Peng$^{\dag}$ \\[2ex]
{\small School of Mathematics, Hunan University} \\
{\small Changsha, Hunan, 410082, P.R. China }  }

\maketitle

\vspace{-0.5cm}

\begin{abstract}
The celebrated Erd\H{o}s--Ko--Rado theorem \cite{EKR1961} states that the maximum intersecting $k$-uniform family on $[n]$ is a full star if $n\ge 2k+1$. Furthermore,  Hilton-Milner \cite{HM1967} showed that  if  an intersecting $k$-uniform family on $[n]$  is not a subfamily of a full star,  then its maximum size achieves only on a family isomorphic to
$HM(n,k):=
\Bigl\{G\in {[n] \choose k}:
1\in G, G\cap [2,k+1] \neq \emptyset \Bigr\}
\cup \Bigl\{ [2,k+1]  \Bigr\}
$
if $n>2k$ and $k\ge 4$, and there is one more possibility in the case of $k=3$.  Han and Kohayakawa \cite{HK2017} determined the maximum intersecting $k$-uniform family on $[n]$ which is neither a subfamily of a full star nor a subfamily of the extremal family in Hilton-Milner theorm, and they asked what is the next maximum intersecting $k$-uniform family on $[n]$.  Kostochka and Mubayi \cite{KM2016} gave the answer  for  large enough $n$. In this paper, we are going to get rid of  the requirement that $n$ is large enough in the result by  Kostochka and Mubayi \cite{KM2016} and answer the question of Han and Kohayakawa \cite{HK2017}.
 \end{abstract}

{{\bf Key words:}
Intersecting families; Extremal finite sets; Shifting method. }

{{\bf 2010 Mathematics Subject Classification.}  05D05, 05C65, 05D15.}

\section{Introduction}

    For a positive interge $n$, let $[n]=\{1, 2, \dots, n\}$ and $2^{[n]}$ be the family of all subsets of $[n]$.  An $i$-element subset $A\subseteq [n]$ is called an $i$-set.
    For $0\le k \le n$, let ${[n] \choose k}$ denote the collection of all $k$-sets of $[n]$. A family $\mathcal{F} \subseteq {[n] \choose k}$ is called {\it k-uniform}. For a family $\mathcal{F}\subseteq 2^{[n]}$, we say $\mathcal{F}$ is {\it intersecting}  if for any two distinct sets $F$ and $F'$ in $\mathcal{F}$ we have $ | F\cap F' |\ge 1$. In this paper, we always consider a $k$-uniform intersecting family on $[n]$. The following celebrated theorem of
Erd\H{o}s--Ko--Rado determines the maximum intersecting family.

For $x\in [n]$ denote $\mathcal{F}_x:=\{F\in {[n] \choose k}: x\in F\}$ by the {\it full star} centered at $x$. We say $\mathcal{F}$ is {\it EKR} if $\mathcal{F}$ is contained in a full star.
\begin{theorem}[Erd\H{o}s--Ko--Rado \cite{EKR1961}]\label{thm1.1}
Let $n\ge 2k$ be integer and $\mathcal{F}$ be a $k$-uniform intersecting family of subsets of $[n]$. Then
$$  |\mathcal{F}| \le {n-1 \choose k-1}.$$
  Moreover, when $n>2k$,
  equality holds if and only if
  $\mathcal{F}$ is a full star.
\end{theorem}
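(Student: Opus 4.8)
The plan is to prove Theorem~\ref{thm1.1} by Katona's cyclic permutation (``circle'') method. Consider the $(n-1)!$ cyclic arrangements of $[n]$ around a circle; given one, call a $k$-set an \emph{arc} if its elements occupy $k$ consecutive positions, so that each arrangement has exactly $n$ arcs. The crux is the local bound: \emph{for any cyclic arrangement and any intersecting family $\mathcal F$, at most $k$ of the $n$ arcs lie in $\mathcal F$, provided $n\ge 2k$}. To prove this, assume some arc $A\in\mathcal F$ and relabel positions so that $A$ occupies $1,\dots,k$. For each $i\in\{2,\dots,k\}$ set $B_i=\{i,i+1,\dots,i+k-1\}$ and $C_i=\{i-k,i-k+1,\dots,i-1\}$ (indices mod $n$). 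Since $n\ge 2k$, the sets $B_i$ and $C_i$ are disjoint, so at most one of them can belong to the intersecting family $\mathcal F$; and a short case check shows that every arc other than $A$ meeting $A$ equals some $B_i$ or some $C_i$ with $i\in\{2,\dots,k\}$. Summing over the $k-1$ disjoint pairs $\{B_i,C_i\}$ and adding $A$ itself gives at most $k$ arcs of $\mathcal F$.

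Granting the local bound, I would finish the inequality by double counting the pairs $(c,F)$ with $c$ a cyclic arrangement and $F\in\mathcal F$ an arc of $c$. By the local bound this count is at most $k\cdot(n-1)!$. On the other hand, a fixed $k$-set is an arc in exactly $k!\,(n-k)!$ of the cyclic arrangements (order the set internally, then arrange the resulting block together with the remaining $n-k$ elements around the circle), so the count equals $|\mathcal F|\cdot k!\,(n-k)!$. Comparing the two gives $|\mathcal F|\le \frac{k\,(n-1)!}{k!\,(n-k)!}=\binom{n-1}{k-1}$, valid for every $n\ge 2k$. (A compression/shifting argument would also yield the bound, but the circle method is the most direct route to the exact value.)

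For the equality clause I would examine when the double count is tight: if $|\mathcal F|=\binom{n-1}{k-1}$, then \emph{every} cyclic arrangement must contain exactly $k$ arcs of $\mathcal F$. The step I expect to be the real obstacle is the purely combinatorial claim that, \emph{when $n>2k$}, any $k$ pairwise intersecting length-$k$ arcs on an $n$-cycle share a common vertex; this fails at $n=2k$ (e.g.\ $\{1,2,3\},\{3,4,5\},\{5,6,1\}$ on a $6$-cycle), which is exactly why the hypothesis sharpens there. Granting it, for each arrangement $c$ the $k$ arcs of $\mathcal F$ are precisely the $k$ arcs through some vertex $x_c$. It then remains to globalize, i.e.\ to show $x_c=x$ is independent of $c$ --- for instance by comparing arrangements that differ by a single transposition of adjacent positions far from $x_c$, which keeps all $k$ arcs through $x_c$ intact and hence forces the centre to persist --- after which $\mathcal F$ contains every $k$-set through $x$ (each such set is an arc through $x$ in a suitable arrangement) and so, by the cardinality count, $\mathcal F=\mathcal F_x$. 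In short, the inequality is routine once the local bound is set up; the delicate part is the equality analysis, namely the common-vertex claim and the globalization argument.
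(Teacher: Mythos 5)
The paper does not supply its own proof of Theorem~\ref{thm1.1}: it is the classical Erd\H{o}s--Ko--Rado theorem, imported verbatim from \cite{EKR1961} and used as a black box, so there is no internal argument to compare against. On its own merits, your plan is Katona's cyclic permutation (``circle'') proof, which is a standard and correct route to the result. The inequality half is complete: the local bound (at most $k$ of the $n$ arcs in a fixed cyclic order can lie in an intersecting family when $n\ge 2k$, via the $k-1$ disjoint pairs $\{B_i,C_i\}$ together with $A$) and the double count (each $k$-set is an arc in exactly $k!\,(n-k)!$ of the $(n-1)!$ cyclic orders) are both right and give $|\mathcal F|\le k(n-1)!/(k!\,(n-k)!)=\binom{n-1}{k-1}$.

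The equality half is, as you yourself flag, only a sketch. The two steps you identify --- (a) for $n>2k$, any $k$ pairwise intersecting arcs of length $k$ on an $n$-cycle pass through a common point, and (b) the common point $x_c$ is the same for all cyclic orders, which forces $\mathcal F=\mathcal F_x$ --- are exactly where the work is, and both are true but nontrivial. For (a), the usual argument fixes $A\in\mathcal F$ and observes that the choices from the pairs $\{B_i,C_i\}$ must be ``nested'': one cannot pick $B_i$ and $C_j$ with $i>j$, since $n>2k$ makes those two arcs disjoint; this forces the chosen arcs to be $B_2,\dots,B_r,C_{r+1},\dots,C_k$ for some $r$, and all of these contain position $r$ (together with $A$). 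Your $n=2k$ counterexample correctly shows where this breaks. For (b), the transposition-of-far-away-points idea works, but you should make explicit that it lets you move any element not equal to $x$ into and out of the set of $k$ neighbours of $x$ and thereby show that \emph{every} $k$-set containing $x$ is in $\mathcal F$; the cardinality then forces equality with $\mathcal F_x$. With those two steps written out, the argument would be a complete and correct proof.
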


The theorem of Hilton-Milner determines the maximum size of non-EKR families.

\begin{theorem}[Hilton--Milner \cite{HM1967}] \label{thm1.2}
Let $k\ge 2$ and $n\ge 2k$ be integers and $\mathcal{F}\subseteq {[n] \choose k}$ be
an intersecting  family.
 If  $\mathcal{F}$ is not EKR,  then

$$ |\mathcal{F}| \le {n-1 \choose k-1} -{n-k-1 \choose k-1} +1.$$
 Moreover, for $n>2k$  and $k\ge 4$, equality holds if and only if
$\mathcal{F}$ is isomorphic to
\[ \begin{matrix}
HM(n,k):=
\Bigl\{G\in {[n] \choose k}:
1\in G, G\cap [2,k+1] \neq \emptyset \Bigr\}
\cup \Bigl\{ [2,k+1]  \Bigr\}.
\end{matrix} \]
 For the case  $k=3$,
there is one more possibility, namely
\[ \begin{matrix}
\mathcal{T}(n,3):=\left\{F\in {[n] \choose 3}:
|F\cap [3]|\ge 2 \right\}.
\end{matrix} \]
\end{theorem}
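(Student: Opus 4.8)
The plan is to argue by compression, matching the methodology of the paper. First I pass to an extremal configuration: fix an intersecting, non-EKR family $\mathcal{F}\subseteq\binom{[n]}{k}$ of maximum size. Since adjoining a $k$-set to an intersecting family that already fails to be EKR keeps it non-EKR (for every $x$ the family still contains a member avoiding $x$), such an $\mathcal{F}$ must be a maximal intersecting family; not being a full star, it therefore satisfies $\bigcap_{F\in\mathcal{F}}F=\emptyset$. One may also assume $n>2k$, since for $n=2k$ the claimed bound equals $\binom{n-1}{k-1}$ and the inequality is immediate from Theorem~\ref{thm1.1}. Now apply the $(i,j)$-shifts $S_{ij}$ ($1\le i<j\le n$), which preserve $k$-uniformity, cardinality, and the intersecting property. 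The one subtlety is that a shift may turn a non-EKR family into an EKR one, so I follow the orbit of $\mathcal{F}$ under the shifts and distinguish two cases according to whether this ever happens.

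\emph{Case 1: at some stage a shift destroys the non-EKR property.} Say $\mathcal{H}$ is a family reached along the way that is still intersecting and non-EKR, has $|\mathcal{H}|=|\mathcal{F}|$, but $S_{ij}(\mathcal{H})$ is EKR. Checking which element the compressed family can be starred at rules out every element except $i$; consequently every member of $\mathcal{H}$ contains $i$ or $j$, and, since $\mathcal{H}$ is a star at neither $i$ nor $j$, there are members containing $j$ but not $i$ and members containing $i$ but not $j$. Writing $\mathcal{A}_0,\mathcal{A}_{ij},\mathcal{B}$ for the members of $\mathcal{H}$ containing only $i$, both $i$ and $j$, and only $j$ respectively, we get $\mathcal{A}_0,\mathcal{B}\ne\emptyset$, and deleting $i$ from the members of $\mathcal{A}_0$ and $j$ from the members of $\mathcal{B}$ produces two nonempty cross-intersecting $(k-1)$-uniform families on the $(n-2)$-set $[n]\setminus\{i,j\}$. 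Applying the Hilton--Milner bound for cross-intersecting uniform families (itself a consequence of the shifting method, and available since $n-2\ge2(k-1)$), together with $|\mathcal{A}_{ij}|\le\binom{n-2}{k-2}$ and $\binom{n-1}{k-1}=\binom{n-2}{k-1}+\binom{n-2}{k-2}$, yields $|\mathcal{H}|\le\binom{n-1}{k-1}-\binom{n-k-1}{k-1}+1$. Moreover, tracking the equality cases of the cross-intersecting bound shows that equality above would force $\mathcal{H}=\{G\in\binom{[n]}{k}:j\in G,\ G\cap A_0\ne\emptyset\}\cup\{A_0\}$ for some $k$-set $A_0$ with $i\in A_0$, $j\notin A_0$; but a direct inspection shows that for such an $\mathcal{H}$ the family $S_{ij}(\mathcal{H})$ is \emph{not} EKR, contradicting the case hypothesis. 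Hence Case~1 always produces a strict inequality, and the equality cases of the theorem can arise only in Case~2.

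\emph{Case 2: no shift ever destroys the non-EKR property.} Then the process terminates at a shifted, intersecting, non-EKR family $\mathcal{G}$ with $|\mathcal{G}|=|\mathcal{F}|$. Since $\mathcal{G}$ is not a star at $1$, it contains a member avoiding $1$, and shifting its elements downward forces $[2,k+1]\in\mathcal{G}$. Write $\mathcal{G}=\mathcal{G}^{(1)}\cup\mathcal{G}^{(0)}$ for the members containing $1$ and those avoiding $1$; then $\mathcal{G}^{(0)}\ne\emptyset$. Passing to the trace $\mathcal{C}=\{C\in\binom{[2,n]}{k-1}:\{1\}\cup C\in\mathcal{G}\}$ of $\mathcal{G}^{(1)}$, one checks that $\mathcal{C}$ (which is $(k-1)$-uniform) and $\mathcal{G}^{(0)}$ (which is $k$-uniform) are cross-intersecting families on $[2,n]$, both shifted, with $\mathcal{G}^{(0)}\ne\emptyset$ and $|\mathcal{G}|=|\mathcal{C}|+|\mathcal{G}^{(0)}|$. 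It thus suffices to prove the mixed Hilton--Milner-type inequality $|\mathcal{C}|+|\mathcal{G}^{(0)}|\le\binom{n-1}{k-1}-\binom{n-k-1}{k-1}+1$ on the $(n-1)$-set $[2,n]$, which I would establish either by one further round of shifting (now unobstructed, the non-EKR constraint being no longer in force) or by induction on $k$. The rigidity of shiftedness drives this: a member of a shifted intersecting family that avoids $1$, written $\{a_1<\dots<a_k\}$, must satisfy $a_t\le2t-1$ for some $t$, since otherwise the disjoint sets $\{1,3,\dots,2k-1\}$ and $\{2,4,\dots,2k\}$ both lie in the family. Analyzing equality in this inequality pins down exactly two shifted configurations: $\mathcal{G}^{(0)}$ a single $k$-set together with all $(k-1)$-sets of $[2,n]$ meeting it (which reassembles to $HM(n,k)$), and, only when $k=3$, $\mathcal{G}^{(0)}$ consisting of all $k$-sets of $[2,n]$ through a fixed $2$-set (which reassembles to $\mathcal{T}(n,3)$).

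Finally, to complete the characterization I would descend the chain of shifts from $\mathcal{G}$ back to $\mathcal{F}$: since equality in the bound holds at every stage of the chain and the fully compressed families $HM(n,k)$ and $\mathcal{T}(n,3)$ are fixed by every $S_{ij}$, a reverse-shifting argument shows that $\mathcal{F}$ is isomorphic to the extremal family $\mathcal{G}$, i.e.\ to $HM(n,k)$ when $k\ge4$ and to $HM(n,k)$ or $\mathcal{T}(n,3)$ when $k=3$. The two steps I expect to be the genuine obstacles are: (a) determining precisely the equality cases of the mixed cross-intersecting inequality in Case~2 --- this is exactly where the exceptional family $\mathcal{T}(n,3)$ for $k=3$ has to be isolated; and (b) the reverse-shifting step, namely showing that an $S_{ij}$ which preserves cardinality and keeps the Hilton--Milner bound tight must act as an isomorphism, which needs more than the soft properties of the shift operator and is the usual crux in uniqueness arguments of this type.
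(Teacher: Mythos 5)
This theorem is not proved in the paper; it is quoted directly from Hilton and Milner \cite{HM1967} as background, so there is no in-paper argument for your outline to match or diverge from. Evaluated on its own terms, your proposal follows the standard shifting architecture, and Case~1 is essentially sound: the decomposition into $\mathcal{A}_0$, $\mathcal{A}_{ij}$, $\mathcal{B}$ and the cross-intersecting bound give the estimate, and the equality-killing step is correct --- the set $(A_0\setminus\{i\})\cup\{j\}$ already lies in $\mathcal{H}$, so it is fixed by $S_{ij}$ and $S_{ij}(\mathcal{H})$ retains a member avoiding $i$, hence is non-EKR for $k\ge 3$ (the only range where the equality statement applies). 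The shiftedness fact you invoke, that every member $\{a_1<\dots<a_k\}$ of a shifted intersecting family has $a_t\le 2t-1$ for some $t$, is also correct.

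The genuine gap is Case~2, which carries the entire weight of the theorem. The inequality $|\mathcal{C}|+|\mathcal{G}^{(0)}|\le\binom{n-1}{k-1}-\binom{n-k-1}{k-1}+1$ for the cross-intersecting pair $(\mathcal{C},\mathcal{G}^{(0)})$ with $\mathcal{G}^{(0)}\ne\emptyset$, the determination of its equality configurations (in particular isolating $\mathcal{T}(n,3)$ at $k=3$), and the reverse-shifting step carrying the structure of $\mathcal{G}$ back to $\mathcal{F}$ are all deferred to ``one further round of shifting or induction on $k$.'' That sentence is a placeholder for the bulk of the work: the $a_t\le 2t-1$ fact controls members of $\mathcal{G}^{(0)}$ one at a time but does not by itself produce the required trade-off between $|\mathcal{C}|$ and $|\mathcal{G}^{(0)}|$, and nothing is said about why equality forces exactly the two claimed shapes. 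You have, in fact, flagged both of these as the ``genuine obstacles,'' which is an honest assessment --- but until they are filled in the proposal is a roadmap rather than a proof.
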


We say a family $\mathcal{F}$ is {\it HM} if it is isomorphic to a subfamily of HM($n, k$). We say that 1 is the {\it center} of HM($n, k$).

Let $E\subseteq [n]$ be an $i$-set and $x\in [n]$. We define
\[
\mathcal{G}_i:= \left\{ G\in {[n]\choose k}:E\subseteq G\right\} \cup \left\{ G\in {[n]\choose k}: x\in G \,\, \text{and} \,\, G\cap E\neq \emptyset \right\}.
\]
We call $x$ the {\it center}, and $E$ the {\it core} of $\mathcal{G}_i$ for $i\ge 3$.
With a slight tweaking, we call $\{x\}\cup E$ the {\it core} of $\mathcal{G}_2$.
 Note that $\mathcal{G}_k=HM(n, k)$.

For a $(k-1)$-set $E$, a point $x\in [n]\setminus E$, and an $i$-set $J \subset [n] \setminus (E\cup\{x\})$, we denote
\begin{align*}
\mathcal {J}_i:=\left\{G\in {[n]\choose k}: E \subseteq G \,\, \text{and} \,\, G \cap J \neq \emptyset\right\} \cup
\left\{G\in {[n]\choose k}:J\cup\{x\}\subseteq G\right\} \\
\cup \left\{G\in {[n]\choose k}:x\in G, G\cap E \neq \emptyset\right\}.
\end{align*}
We call $x$ the {\it center}, $E$ the {\it kernel}, and $J$ the {\it set of pages}.

For two $k$-sets $E_1$ and $E_2 \subseteq [n] $ with $|E_1\cap E_2|=k-2$, and  $x\in [n]\setminus (E_1\cup E_2)$, we define
$$\mathcal{K}_2:=\{ G\in {[n]\choose k}: x\in G, G\cap E_1\ne \emptyset \,\, \text{and} \,\, G\cap E_2\ne \emptyset \} \cup \{E_1, E_2\},$$
and call $x$ the {\it center} of $\mathcal{K}_2$.

In \cite{HK2017}, Han and Kohayakawa obtained the size of a maximum non-EKR, non-HM intersecting family.

\begin{theorem}[Han--Kohayakawa \cite{HK2017}]\label{thm1.4}

Suppose $k\ge 3$ and $n\ge 2k+1$ and let $\mathcal{H}$ be an intersecting $k$-uniform family on
 $[n]$. Furthermore, assume that $\mathcal{H}$ is neither EKR nor HM, if $k=3$,
  $\mathcal{H} \not\subseteq \mathcal{G}_2
$. Then
  $$|\mathcal{H}|\le {n-1 \choose k-1} -{n-k-1 \choose k-1} -{n-k-2 \choose k-2}+2.$$
  For $ k=4$, equality holds if and only if $\mathcal{H}= \mathcal{J}_{2},\, \mathcal{G}_2$ or
  $ \mathcal{G}_3$. For every other $k$, equality holds if and only if $\mathcal{H}= \mathcal{J}_2 $.
\end{theorem}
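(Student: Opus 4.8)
\medskip
\noindent\textbf{Proof strategy.}
The plan is to run the shifting (left-compression) method and build on the Hilton--Milner theorem. First I would compress: applying the operations $S_{ij}$ ($1\le i<j\le n$) repeatedly keeps $\mathcal{H}$ intersecting and $|\mathcal{H}|$ unchanged. The delicate point is that a single compression can destroy ``non-EKR'' (or ``non-HM'', or ``$\not\subseteq\mathcal{G}_2$''), so I would isolate a lemma: if $\mathcal{G}$ obeys all three conditions but $S_{ij}(\mathcal{G})$ violates one, then $\mathcal{G}$ is contained in a union of at most two stars with a rigid overlap pattern, and for such $\mathcal{G}$ the bound $|\mathcal{G}|\le M$ together with the listed equality cases follow by a direct count. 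Thereafter I may assume $\mathcal{H}$ is shifted; then $1$ is a most popular element, and since $\mathcal{H}$ is not EKR a colex-minimality argument shows $[2,k+1]\in\mathcal{H}$.

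\medskip
\noindent
Next I pass to a cross-intersecting picture. Let $\mathcal{A}=\{F\in\mathcal{H}:1\in F\}$, $\mathcal{B}=\{F\in\mathcal{H}:1\notin F\}\neq\emptyset$, and $\mathcal{A}^-=\{A\setminus\{1\}:A\in\mathcal{A}\}$. Then $\mathcal{A}^-$ and $\mathcal{B}$ are cross-intersecting on $[2,n]$ ($(k-1)$- and $k$-uniform respectively), and $|\mathcal{H}|=|\mathcal{A}^-|+|\mathcal{B}|$. If $|\mathcal{B}|=1$ then $\mathcal{B}=\{[2,k+1]\}$ and $\mathcal{H}$ is a subfamily of $HM(n,k)$, contrary to hypothesis; hence $|\mathcal{B}|\ge2$, so every member of $\mathcal{A}^-$ meets two distinct sets of $\mathcal{B}$, which already tightens the estimate of $|\mathcal{A}^-|$. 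Since $\mathcal{H}$ (hence $\mathcal{B}$) is shifted, $\bigcap_{B\in\mathcal{B}}B$ is an initial segment $[2,c+1]$ for some $c\ge0$, and I organize the analysis by $c$ (note $c\ge1$ iff $\tau(\mathcal{B})=1$, and all three extremal families have $\tau(\mathcal{B})=1$). When $c=0$ the family $\mathcal{B}$ is itself non-EKR, and the demand that a single $(k-1)$-set of $\mathcal{A}^-$ meet every member of $\mathcal{B}$ — several of which have small union — cuts $|\mathcal{A}^-|$ down far enough that $|\mathcal{H}|<M$, so this case contributes nothing.

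\medskip
\noindent
When $c\ge1$, peel the common coordinates $[2,c+1]$ off $\mathcal{B}$ and split $\mathcal{A}^-$ according to whether it meets $[2,c+1]$: the part meeting $[2,c+1]$ has size at most $\binom{n-1}{k-1}-\binom{n-1-c}{k-1}$, while the part avoiding $[2,c+1]$ is cross-intersecting with $\mathcal{B}^*:=\{B\setminus[2,c+1]:B\in\mathcal{B}\}$ on $[c+2,n]$. The inequality $|\mathcal{H}|\le M$ then reduces, after optimizing over $c$, to a Hilton--Milner-type stability statement for cross-intersecting uniform families (roughly: a cross-intersecting pair that is neither degenerate nor of the one extremal shape has total size at least $\binom{n-k-2}{k-2}-1$ below the Hilton--Milner value). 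The equality analysis identifies the extremal $\mathcal{H}$: for $c=k-1$, $\mathcal{B}$ is a sunflower with a $(k-1)$-element core and $|\mathcal{B}|$ one-element petals, and $|\mathcal{H}|$ is maximized exactly when $|\mathcal{B}|=2$, i.e.\ $\mathcal{J}_2$; for $c\in\{2,3\}$ with $\mathcal{B}$ the family of all $k$-supersets of its core, one gets $\mathcal{G}_2,\mathcal{G}_3$. A binomial identity gives $|\mathcal{J}_2|=|\mathcal{G}_2|=|\mathcal{G}_3|=M$, shows that among $\mathcal{G}_2,\mathcal{G}_3$ equality with $M$ holds only for $k=4$, and that $\mathcal{J}_2$ is the sole extremal family for $k\ge5$; the case $k=3$ is exactly where $\mathcal{G}_2\cong\mathcal{T}(n,3)$ exceeds $M$, which is why the hypothesis excludes $\mathcal{H}\subseteq\mathcal{G}_2$ there, and the few degenerate configurations at $k=3$ are then checked by hand.

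\medskip
\noindent
I expect the main obstacle to be precisely the sharp cross-intersecting stability inequality together with its exact equality cases: this is the technical heart, and obtaining it for every $n\ge2k+1$ (rather than for $n$ large, as in \cite{KM2016}) seems to need a genuinely finite-$n$ argument — a nested induction on $k$ and $n$, or a second round of compression inside the reduced families. A secondary obstacle is making the compression reduction airtight, i.e.\ enumerating all the ways the three structural hypotheses can collapse under one $S_{ij}$ and bounding each resulting family directly.
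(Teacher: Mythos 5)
The paper does not prove Theorem~\ref{thm1.4}: it is quoted verbatim from Han--Kohayakawa \cite{HK2017} and used as a black box (for instance inside the proof of Lemma~\ref{lem2.3}), so there is no in-paper proof to compare your attempt against. What the paper does import from \cite{HK2017}, namely Proposition~\ref{prop3.2} (non-separability of the ``between'' families $\{B:0<|B\cap A|<|A|\}$) and Lemma~\ref{lem3.2} (shift-rigidity of $\mathcal{J}_2$, $\mathcal{G}_{k-1}$, $\mathcal{G}_2$), is precisely the machinery one would need to make your equality analysis and your compression-rigidity lemma precise, so you have identified the right toolbox, and your bookkeeping ($\bigcap_{B\in\mathcal{B}}B$ an initial segment $[2,c+1]$, $c=k-1$ giving $\mathcal{J}_2$, $c\in\{2,3\}$ giving $\mathcal{G}_2,\mathcal{G}_3$, $\mathcal{G}_2\cong\mathcal{T}(n,3)$ exceeding $M$ at $k=3$) is consistent.

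As a proof, though, the proposal has two genuine gaps. First, the assertion that whenever a single $S_{ij}$ spoils one of the three structural hypotheses the family $\mathcal{G}$ collapses into ``a union of at most two stars with a rigid overlap pattern'' is stated, not argued, and it is doubtful it is true in that clean form: the present paper's analogue for the very next level of stability, Lemma~\ref{lem2.1}, takes several pages of delicate case analysis (Lemmas~\ref{case1}--\ref{case5}) and, crucially, proceeds not by bounding the pre-compressed family directly but by fixing a small set $X_i$ and forbidding all subsequent shifts from touching $X_i$. Second, the ``Hilton--Milner-type stability for cross-intersecting uniform families'' that you correctly flag as the technical heart is not supplied, nor are its exact equality cases, and without it the passage from the $c\ge 1$ analysis to the bound and to the sporadic pair $\mathcal{G}_2,\mathcal{G}_3$ at $k=4$ is unsubstantiated. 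Note finally that for the paper's own stronger Theorem~\ref{thm1.5} the authors do not use your $\mathcal{A}^-/\mathcal{B}$ cross-intersecting decomposition at all; after the shift-stability lemmas they restrict the family to a $2k$-set $Y$, bound each trace layer $\mathcal{A}_i$ by inclusion-exclusion (Lemma~\ref{lem2.3}), and multiply by $\binom{n-2k}{k-i}$ (Lemma~\ref{lem2.8}). That trace method replaces one sharp cross-intersecting inequality by a cascade of elementary counting bounds, which is what lets them handle every $n\ge 2k+1$ rather than only large $n$.
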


 Han and Kohayakawa \cite{HK2017} proposed the following question.

 \begin{question}\label{quest1}
Let $n\ge 2k+1$. What is the maximum size of an intersecting family $\mathcal{H}$ that is neither EKR nor HM, and $\mathcal{H} \not\subseteq \mathcal{J}_2$ $(\!\!$ in addition $\mathcal{H} \not\subseteq \mathcal{G}_2$ and $\mathcal{H} \not\subseteq \mathcal{G}_3$ if $k=4$ $\!\!)$?
 \end{question}

  Regarding this question, Kostochka and Mubayi \cite{KM2016} showed that the answer is $|\mathcal{J}_3|$ for sufficiently large $n$. In fact they proved that the maximum size of an intersecting family that is neither EKR, nor HM, nor contained in $\mathcal{J}_i$ for each $i$, $2\le i\le k-1$ (nor in $\mathcal{G}_2,\mathcal{G}_3$ for $k=4$) is $|\mathcal{K}_2|$ for all large enough $n$. In paper \cite{KM2016}, they also established the structure of almost all intersecting 3-uniform families.
Sometimes, it is relatively easier to get extremal families under the assumption that $n$ is large enough. For example, 
  Erd\H{o}s matching conjecture \cite{MC1965} states  that for a $k$-uniform family $\mathcal{F}$ on finite set $[n]$,  $|\mathcal{F}|\le \max\{{k(s+1)-1\choose k}, {n\choose k}-{n-s\choose k}\}$ if there is no $s+1$ pairwise disjoint members of $\mathcal{F}$ and $n\ge(s+1)k$, and it was proved to be true for large enough  $n$ in \cite{MC1965}. There has been a lot of recent studies for small n (see \cite{Fra2013, FraX, HLS2012, LM2014}).
However, the conjecture is not completely verified for small $n$.  Up to now, the best condition on $n$  was given by Frankl in \cite{Fra2013, FT2018} that $n\ge k(2s+1)-s$, for $(s+1)k\le n\le k(2s+1)-s-1$.

 As mentioned by Han and Kohayakawa in  \cite{HK2017}, for $k\ge 4$, the bound in Theorem \ref{thm1.4} can be deduced from Theorem 3 in \cite{HM1967} which was established by Hilton and Milner in 1967. However,  family $\mathcal{H}$ in Question \ref{quest1}  does not satisfy the hypothesis of Theorem 3 in \cite{HM1967} for  $k\ge 4$. This makes Question \ref{quest1} more interesting.
In this paper, we answer Question \ref{quest1}. We are going to get rid of  the requirement that $n$ is large enough in the result by  Kostochka and Mubayi \cite{KM2016}.  As in the proofs of Theorem \ref{thm1.1}, Theorem \ref{thm1.2} and Theorem \ref{thm1.4}, we will apply the shifting method. The main difficulty in our proof is to guarantee that we can get a {\it stable} family which is not EKR, not HM, $\not\subseteq \mathcal{J}_2$ $(\!\!$ in addition $\not\subseteq \mathcal{G}_2, \not\subseteq \mathcal{G}_3$ if $k=4$ $\!\!)$ after performing a series of shifts to a family which is not EKR, not HM, $\not\subseteq \mathcal{J}_2$ $(\!\!$ in addition $\not\subseteq \mathcal{G}_2, \not\subseteq \mathcal{G}_3$ if $k=4$ $\!\!)$.
  Our main result is as follows.

\begin{theorem}\label{main}

Let $k\ge 4$ and $\mathcal{H} \subseteq {[n]\choose k}$ be an intersecting  family  which is neither EKR nor HM. Furthermore, $\mathcal{H} \not\subseteq \mathcal{J}_2$ $(\!\!$ in addition $\mathcal{H} \not\subseteq \mathcal{G}_2$ and $\mathcal{H} \not\subseteq \mathcal{G}_3$ if $k=4$ $\!\!)$. \\
(i) If $2k+1\le n\le 3k-3$, then
  $$|\mathcal{H}| \le {n-1 \choose k-1} -2{n-k-1 \choose k-1} +{n-k-3 \choose k-1}+2,$$
  Moreover,  the equality holds only for $\mathcal{H}= \mathcal{K}_2 $ if $k\ge 5$, and $\mathcal{H}= \mathcal{K}_2 $ or $\mathcal{J}_3$ if $k=4$.\\
(ii) If  $n\ge 3k-2$, then
  $$|\mathcal{H}|\le {n-1 \choose k-1} -{n-k-1 \choose k-1} -{n-k-2 \choose k-2} -{n-k-3 \choose k-3}+3.$$
 Moreover, for $k=5$, the equality holds only for $\mathcal{H}= \mathcal{J}_{3}$  or $\mathcal{G}_4$. For every other $k$, equality holds only for $\mathcal{H}= \mathcal{J}_3 $.
\end{theorem}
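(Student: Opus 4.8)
The plan is to follow the shifting-method framework used for Theorems~\ref{thm1.1}, \ref{thm1.2}, and~\ref{thm1.4}, but with the extra care needed to preserve the structural hypotheses under shifting. First I would set up the compression operators $S_{ij}$ and record their basic properties: shifting preserves intersecting-ness and the uniformity, and it cannot increase the size of the family. The key preliminary task is a \emph{stability lemma}: starting from an intersecting $\mathcal{H}$ that is not EKR, not HM, and $\not\subseteq\mathcal{J}_2$ (and, when $k=4$, also $\not\subseteq\mathcal{G}_2,\mathcal{G}_3$), I want to show that one can apply a suitable sequence of shifts and arrive at a shifted family $\mathcal{H}'$ with $|\mathcal{H}'|=|\mathcal{H}|$ that is still in this class. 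The natural obstacle is that a single shift $S_{ij}$ can collapse a "generic" family into one of the forbidden configurations; the remedy is the standard one — if some shift destroys the property, analyze the pre-image family, show it must already have been very close to an extremal configuration, and bound its size directly (it will be strictly below the claimed bound), so we may assume all shifts are "safe." This is the step I expect to be the main obstacle, and it is exactly the difficulty flagged in the introduction.

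Once a shifted (compressed) family $\mathcal{H}$ in the right class is in hand, I would exploit shiftedness to control the structure. Let $\tau(\mathcal{H})$ denote the covering number. Since $\mathcal{H}$ is not EKR we have $\tau(\mathcal{H})\ge 2$; since $\mathcal{H}$ is not HM and not contained in $\mathcal{J}_2$, one pushes further. The heart of the argument is a case analysis on $\tau(\mathcal{H})$ and on the structure of a minimum cover together with the "link" families. For $\tau(\mathcal{H})=2$, a minimum cover is a pair $\{x,y\}$; shiftedness forces $x,y$ to be small (say $\{1,2\}$), and one partitions $\mathcal{H}$ according to which of $1,2$ a set contains, estimating each part via the Hilton--Milner bound applied to the links $\mathcal{H}(1)$ and $\mathcal{H}(2)$, each of which is an intersecting $(k-1)$-uniform family subject to the inherited restrictions. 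The families $\mathcal{K}_2$, $\mathcal{J}_3$, $\mathcal{G}_4$ appear precisely as the configurations achieving equality in these sub-bounds, which is how one reads off the extremal characterization. For $\tau(\mathcal{H})\ge 3$ one uses known upper bounds for intersecting families with large covering number (Frankl-type bounds) to show $|\mathcal{H}|$ is strictly smaller than the target in both ranges of $n$, so such families are irrelevant to the extremal problem.

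The split between $2k+1\le n\le 3k-3$ and $n\ge 3k-2$ arises from comparing the sizes $|\mathcal{K}_2|$ and $|\mathcal{J}_3|$: one computes
\[
|\mathcal{K}_2|={n-1\choose k-1}-2{n-k-1\choose k-1}+{n-k-3\choose k-1}+2,
\]
\[
|\mathcal{J}_3|={n-1\choose k-1}-{n-k-1\choose k-1}-{n-k-2\choose k-2}-{n-k-3\choose k-3}+3,
\]
and checks that the first dominates exactly when $n\le 3k-3$ while the second dominates when $n\ge 3k-2$ (with the transition $k=4$, resp.\ $k=5$, producing the stated coincidences and the extra extremal family $\mathcal{G}_4$). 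Thus part~(i) and part~(ii) are really the same theorem with the roles of $\mathcal{K}_2$ and $\mathcal{J}_3$ exchanged. The routine work is: (a) verifying these two size formulas by inclusion--exclusion; (b) the crossover computation; (c) confirming that in the $\tau=2$ analysis every part other than the dominant configuration contributes a strict deficit, so equality pins down $\mathcal{H}$ up to isomorphism. I would organize the write-up as: a section on shifting and the stability lemma, a section bounding the large-$\tau$ case, a section on the $\tau=2$ structural analysis yielding the bound, and a final section matching the extremal families and doing the crossover arithmetic.
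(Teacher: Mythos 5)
Your high-level plan (compress, then analyze the structure of a stable family, then separate the two regimes by comparing $|\mathcal{K}_2|$ and $|\mathcal{J}_3|$) matches the paper's outline, and the crossover arithmetic in your last paragraph is exactly right. But there are two genuine gaps in the middle.

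First, your handling of the stability step does not work as stated. You say: if a shift $S_{xy}$ would push the family into EKR/HM/$\mathcal{J}_2$/$\mathcal{G}_2$/$\mathcal{G}_3$, then ``analyze the pre-image family, show it must already have been very close to an extremal configuration, and bound its size directly (it will be strictly below the claimed bound), so we may assume all shifts are safe.'' This is circular: the shift preserves cardinality, so if $S_{xy}(\mathcal{H})$ is (say) HM, then $|\mathcal{H}|=|S_{xy}(\mathcal{H})|$ can be as large as the Hilton--Milner bound, which is strictly larger than the target of Theorem~\ref{main}. There is no a priori reason the pre-image should be small; indeed, the whole point is that it need not be. What the paper actually does (Lemma~\ref{lem2.1}) is different and is explicitly flagged in the introduction as the main difficulty: when a bad shift $S_{xy}$ is encountered, you do \emph{not} perform it; instead you identify a small ``forbidden'' vertex set $X_i$ (containing $x,y$) such that every member of the family meets $X_i$, and from then on you only shift inside $[n]\setminus X_i$. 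Showing that this restricted shifting terminates at a stable family that is still non-EKR, non-HM, $\not\subseteq\mathcal{J}_2$ (and for $k=4$, not inside $\mathcal{G}_2,\mathcal{G}_3$) requires the detailed case analysis in Lemmas~\ref{case1}--\ref{case5}, together with the degree lemmas Claims~\ref{clm 1}--\ref{clm00}. Your sketch skips over all of this, and the replacement you propose is not sound.

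Second, your structural phase is organized around the covering number $\tau(\mathcal{H})$, with the $\tau\ge 3$ case disposed of by ``Frankl-type bounds.'' The Frankl bound you have in mind (the one quoted in the concluding remarks of the paper) is only proved for $n$ sufficiently large, which is precisely the restriction this theorem is meant to remove. For $n$ near $2k+1$, $\tau\ge3$ intersecting families can be large relative to $|\mathcal{J}_3|$, and I do not see how to close that case without essentially redoing the analysis from scratch for small $n$. The paper sidesteps $\tau$ entirely: after Fact~\ref{fact0} reduces to the situation where every vertex misses at least $3$ members of the family, it restricts attention to the trace families $\mathcal{A}_i=\{G\cap Y: |G\cap Y|=i\}$ on a fixed window $Y$ of size $2k$ (size $9$ when $k=4$), proves that $\bigcup_i \mathcal{A}_i$ is intersecting (Lemma~\ref{lem2.2}), bounds each $|\mathcal{A}_i|$ individually by inclusion--exclusion and small-uniformity extremal results (Lemma~\ref{lem2.3}), and sums with the obvious counting of extensions (Lemma~\ref{lem2.8}). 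This window method is self-contained and valid for all $n\ge 2k+1$, which is why the paper can drop the large-$n$ assumption. Your $\tau=2$ link analysis is plausible in spirit but you would still have to supply the same amount of careful case work, and you would be left with the unresolved small-$n$, $\tau\ge3$ case.

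One smaller omission: the uniqueness characterization requires, in addition to pinning down the stable family $\mathcal{G}$, showing that a family whose shift is $\mathcal{J}_3$ (or $\mathcal{K}_2$, $\mathcal{J}_2$, $\mathcal{G}_{k-1}$, $\mathcal{G}_2$) is itself isomorphic to that family. The paper's Lemma~\ref{lem3.3} proves this for $\mathcal{J}_3$ via a non-separability argument on the ``shifted-in'' sets; your proposal does not address how uniqueness is transported back along the shift sequence.
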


In Section \ref{sec2}, we will give the proof of Theorem \ref{main}. The proofs of some crucial lemmas for the proof of Theorem \ref{main} are given in Section \ref{sec3}.

\section{Proof of Theorem \ref{main}}\label{sec2}

 In this section, we always assume that $\mathcal{H}$ is a maximum intersecting family which satisfies the conditions of Theorem  \ref{main}, that is, $\mathcal{H}$ is not EKR, not HM, $\mathcal{H} \not\subseteq \mathcal{J}_2$ $(\!\!$ in addition $\mathcal{H} \not\subseteq \mathcal{G}_2, \mathcal{H} \not\subseteq \mathcal{G}_3$ if $k=4$ $\!\!)$.  By direct calculation, we have the following fact.
\begin{fact}\label{fact0}
$ (i) $ Suppose that  there is $x\in [n]$ such that there are only 2 sets, say, $E_1\,\, \text{and} \,\, E_2 \in \mathcal{H}$ missing $x$. If $|E_1 \cap E_2|=k-i \,\, \text{and} \,\, i\ge 2$,  then
\begin{align}\label{eq1}
|\mathcal{H}| &\le {n-1 \choose k-1} -2{n-k-1 \choose k-1} +{n-k-i-1 \choose k-1}+2 \nonumber\\
&\le {n-1 \choose k-1} -2{n-k-1 \choose k-1} +{n-k-3 \choose k-1}+2.
\end{align}
 The equality in (\ref{eq1}) holds if and only if $|E_1 \cap E_2|=k-2$, that is $\mathcal{H}=\mathcal{K}_2$.\\
$ (ii) $ By the definiton of $\mathcal{J}_i$, we have
\begin{equation}\label{eq2}
|\mathcal{J}_3|={n-1 \choose k-1} -{n-k-1 \choose k-1} -{n-k-2 \choose k-2} -{n-k-3 \choose k-3}+3.
\end{equation}
$ (iii) $ Comparing the right hand sides of $($\ref{eq1}$)$ and $($\ref{eq2}$)$, we can see that if  $2k+1\le n \le 3k-3$, then $|\mathcal{K}_2|\ge |\mathcal{J}_3|$, the equality holds only for $k=4$; and if  $n \ge 3k-2$, then $|\mathcal{K}_2|< |\mathcal{J}_3|$.
\end{fact}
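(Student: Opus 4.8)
\textbf{Proof proposal for Fact \ref{fact0}.}

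The three parts are all direct computations, so the plan is simply to organize the counting carefully and then compare the resulting polynomials in $n$.

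For part (i), I would argue as follows. Since $\mathcal{H}$ is intersecting and at most two sets miss the point $x$, the family $\mathcal{H}$ is contained in $\mathcal{F}_x \cup \{E_1,E_2\}$; moreover every member of $\mathcal{H}$ containing $x$ must meet both $E_1$ and $E_2$ (because $E_1,E_2\in\mathcal{H}$). Hence $|\mathcal{H}|\le |\{G\in\binom{[n]}{k}: x\in G,\ G\cap E_1\ne\emptyset,\ G\cap E_2\ne\emptyset\}| + 2$. I would count the complementary sets inside the star $\mathcal{F}_x$ by inclusion--exclusion: the number of $k$-sets through $x$ missing $E_1$ is $\binom{n-k-1}{k-1}$ (since $x\notin E_1$ forces $|E_1|=k$ to lie in the remaining $n-k-1$ points, wait --- actually $x\notin E_1$ and $|E_1|=k$, so the points available for $G\setminus\{x\}$ avoiding $E_1$ number $n-1-k$, giving $\binom{n-k-1}{k-1}$), similarly for $E_2$, and the number missing both is $\binom{n-1-|E_1\cup E_2|}{k-1}=\binom{n-1-(2k-(k-i))}{k-1}=\binom{n-k-i-1}{k-1}$. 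Thus
\[
|\mathcal{H}|\le \binom{n-1}{k-1}-2\binom{n-k-1}{k-1}+\binom{n-k-i-1}{k-1}+2,
\]
which is the first inequality in \eqref{eq1}. Since $\binom{m}{k-1}$ is nondecreasing in $m$ and $i\ge 2$ gives $n-k-i-1\le n-k-3$, the second inequality follows, with equality iff $i=2$; and when $i=2$ every inequality above is an equality precisely when $\mathcal{H}$ consists of \emph{all} such $G$ together with $E_1,E_2$, i.e. $\mathcal{H}=\mathcal{K}_2$. One caveat to check: if \emph{every} point of $[n]$ lies in all but $\le 2$ members of $\mathcal{H}$ we still only need one such $x$, and the case where fewer than two sets miss $x$ only makes $|\mathcal{H}|$ smaller, so the bound is unaffected.

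For part (ii), I would count $|\mathcal{J}_3|$ directly from its definition with $|E|=k-1$, $|J|=3$, and $x\notin E\cup J$. Write $\mathcal{J}_3=\mathcal{A}\cup\mathcal{B}\cup\mathcal{C}$ where $\mathcal{A}=\{G:E\subseteq G,\ G\cap J\ne\emptyset\}$, $\mathcal{B}=\{G:J\cup\{x\}\subseteq G\}$, $\mathcal{C}=\{G:x\in G,\ G\cap E\ne\emptyset\}$. Note $\mathcal{C}$ is exactly the star at $x$ minus the $k$-sets through $x$ avoiding $E$, so $|\mathcal{C}|=\binom{n-1}{k-1}-\binom{n-k}{k-1}$; the sets in $\mathcal{A}\setminus\mathcal{C}$ are those containing $E$ but not $x$ and meeting $J$, of which there are $\binom{n-k}{1}-\binom{n-k-3}{1}-\ldots$ hmm, more cleanly: among $k$-sets containing the $(k-1)$-set $E$ and avoiding $x$, there are $n-k$ choices for the last point, and those meeting $J$ number $3$ (one for each point of $J$), minus overlap --- actually $G=E\cup\{j\}$ for $j\in J$ gives exactly $3$ sets, but we must subtract the one already counted if it contains $x$, which it does not; and $\mathcal{B}\setminus(\mathcal{A}\cup\mathcal{C})$ adds the single set $J\cup\{x\}\cup\{?\}$ --- I will instead just expand the expression in \eqref{eq2} and verify it term by term against a clean inclusion--exclusion over the three defining conditions, tracking the $+3$ as the contribution of the three ``page'' sets $E\cup\{j\}$, $j\in J$, that are not in the star at $x$. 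This is the part most prone to an off-by-one error, so I would double-check it by evaluating both sides at a small concrete $(n,k)$, e.g. $(n,k)=(11,4)$.

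For part (iii), I would compare the two right-hand sides. Subtracting, the difference $|\mathcal{K}_2|-|\mathcal{J}_3|$ equals $-\binom{n-k-1}{k-1}+\binom{n-k-3}{k-1}+\binom{n-k-2}{k-2}+\binom{n-k-3}{k-3}-1$. Using the Pascal identity $\binom{n-k-1}{k-1}-\binom{n-k-3}{k-1}=\binom{n-k-2}{k-2}+\binom{n-k-3}{k-2}$, this simplifies to $\binom{n-k-3}{k-3}-\binom{n-k-3}{k-2}-1$. Now $\binom{n-k-3}{k-3}-\binom{n-k-3}{k-2}$ is positive, zero, or negative according as $n-k-3$ is less than, equal to, or greater than $(k-3)+(k-2)=2k-5$, i.e. according as $n$ is less than, equal to, or greater than $3k-2$; a short case analysis on the boundary values (and the observation that the extra $-1$ only matters right at $n=3k-3$, where it forces equality exactly when $k=4$) yields the stated trichotomy: $|\mathcal{K}_2|\ge|\mathcal{J}_3|$ for $2k+1\le n\le 3k-3$ with equality only at $k=4$, and $|\mathcal{K}_2|<|\mathcal{J}_3|$ for $n\ge 3k-2$. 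The only mild subtlety is handling the degenerate binomial coefficients $\binom{m}{j}$ with $m<j$ or $m<0$ (which occur for the smallest values of $n$ in the range $2k+1\le n\le 3k-3$), where one must use the convention that such coefficients are $0$ and re-examine the boundary cases by hand; I expect this bookkeeping, rather than any real idea, to be the only ``obstacle.''
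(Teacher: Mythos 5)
Your proposal is correct and coincides with the paper's (implicit) approach: the paper offers no proof of Fact \ref{fact0} beyond "by direct calculation," and your inclusion--exclusion in (i), three-way decomposition of $\mathcal{J}_3$ in (ii), and Pascal-identity reduction in (iii) are exactly those calculations. Part (ii), once carried out, does check: with $\mathcal{A}=\{G:E\subseteq G,\,G\cap J\ne\emptyset\}$, $\mathcal{B}=\{G:J\cup\{x\}\subseteq G\}$, $\mathcal{C}=\{G:x\in G,\,G\cap E\ne\emptyset\}$, one gets $|\mathcal{A}|=3$, $\mathcal{A}\cap\mathcal{B}=\mathcal{A}\cap\mathcal{C}=\emptyset$, and $|\mathcal{B}|-|\mathcal{B}\cap\mathcal{C}|=\binom{n-k-3}{k-4}$, so $|\mathcal{J}_3|=3+\binom{n-1}{k-1}-\binom{n-k}{k-1}+\binom{n-k-3}{k-4}$, which telescopes via Pascal to the stated formula.

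One caution on (iii): after you reduce to $|\mathcal{K}_2|-|\mathcal{J}_3|=\binom{n-k-3}{k-3}-\binom{n-k-3}{k-2}-1$, the parenthetical remark that the "$-1$ only matters right at $n=3k-3$" is off. Writing $m=n-k-3$, the quantity $f(m)=\binom{m}{k-3}-\binom{m}{k-2}$ is increasing for $m\le 2k-7$ and satisfies $f(2k-6)=f(2k-7)$, so on the relevant interval $k-2\le m\le 2k-6$ it is minimized at the \emph{lower} endpoint $m=k-2$, where $f(k-2)=k-3$. Thus the place where the $-1$ is closest to biting is $n=2k+1$, and the reason $|\mathcal{K}_2|>|\mathcal{J}_3|$ holds strictly throughout for $k\ge 5$ is that $k-3\ge 2$ there. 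Your conclusion survives only because for $k=4$ the range collapses to the single value $n=2k+1=3k-3=9$, where indeed $f(2)-1=0$; so the argument is fine once the endpoint check is aimed at $n=2k+1$ rather than $n=3k-3$.
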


By Fact \ref{fact0}, we may assume that for any $x$, at least 3 sets in $\mathcal{H}$ do not contain $x$. To show Theorem \ref{main}, it is sufficient to show the following result.

\begin{theorem}\label{thm1.5}
Let $k\ge 4, n\ge 2k+1$ and $\mathcal{H} \subseteq {[n]\choose k}$ be an intersecting  family  which is not EKR, not HM and $\mathcal{H} \not\subseteq \mathcal{J}_2$ $(\!\!$ in addition $\mathcal{H} \not\subseteq \mathcal{G}_2, \mathcal{H} \not\subseteq \mathcal{G}_3$ if $k=4$ $\!\!)$. Moreover, for any $x\in [n]$, there are at least 3 sets in $\mathcal{H}$ not containing $x$. Then
$$|\mathcal{H}|\le {n-1 \choose k-1} -{n-k-1 \choose k-1} -{n-k-2 \choose k-2} -{n-k-3 \choose k-3}+3.$$ Moreover if $k\ne 5$, the equality holds only for $\mathcal{H}=\mathcal{J}_3$; if $k=5$, the equality holds for $\mathcal{H}=\mathcal{J}_3$ or $\mathcal{G}_4$.
\end{theorem}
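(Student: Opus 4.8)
The plan is to run the shifting (left-compression) method to reduce $\mathcal{H}$ to a left-compressed family still satisfying all the hypotheses of Theorem~\ref{thm1.5}, and then to analyse the compressed family according to its covering number. The reduction is the hard part. A shift $S_{ij}$ ($i<j$) preserves $|\mathcal{H}|$ and the intersecting property, but it may turn $\mathcal{H}$ into an EKR family, an HM family, a subfamily of $\mathcal{J}_2$ (or, for $k=4$, of $\mathcal{G}_2$ or $\mathcal{G}_3$), or into a family in which some element is omitted by at most two members. So I would apply shifts greedily, each step choosing one that keeps all the hypotheses (this terminates since $\sum_{F\in\mathcal{H}}\sum_{x\in F}x$ strictly decreases). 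The delicate point is what to do if a stage is reached where $\mathcal{H}$ is not yet left-compressed but every non-trivial shift destroys a hypothesis: then one must read off enough structure of $\mathcal{H}$ \emph{itself} --- which still satisfies all hypotheses, in particular still has at least three members omitting every element --- to finish directly. Typically $\mathcal{H}$ then has a cover of size $2$ or $3$, or is recognisably ``close to'' a star, a copy of $HM$, or a copy of $\mathcal{J}_2$ (resp.\ $\mathcal{G}_2,\mathcal{G}_3$), and one concludes using Theorems~\ref{thm1.2}~and~\ref{thm1.4}, Fact~\ref{fact0}, and routine binomial estimates. Carrying this out --- the ``bad-shift'' lemmas of Section~\ref{sec3} --- is, as the authors stress, \emph{the main obstacle}, and it is where the hypothesis ``at least three members omit every element'' is truly needed: without it, by Fact~\ref{fact0}, the family $\mathcal{K}_2$, which for $2k+1\le n\le3k-3$ exceeds $\mathcal{J}_3$, would have to be admitted. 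After this reduction we may assume $\mathcal{H}$ is left-compressed and still satisfies every hypothesis.

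Assume then $\mathcal{H}$ is left-compressed. Since $\mathcal{H}$ is not EKR, $[2,k+1]\in\mathcal{H}$; since it is not HM, some member $\ne[2,k+1]$ omits $1$, and since at least three members omit each element, at least three members omit $1$. Write $\mathcal{H}(1)=\{F\setminus\{1\}:1\in F\in\mathcal{H}\}\subseteq\binom{[2,n]}{k-1}$ and $\mathcal{H}(\bar1)=\{F\in\mathcal{H}:1\notin F\}\subseteq\binom{[2,n]}{k}$, so $|\mathcal{H}|=|\mathcal{H}(1)|+|\mathcal{H}(\bar1)|$, both are left-compressed, $\mathcal{H}(\bar1)$ is intersecting, and $\mathcal{H}(1),\mathcal{H}(\bar1)$ are cross-intersecting. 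Let $\tau$ be the minimum size of a cover; for compressed families a minimal cover may be chosen coordinate-wise smallest, so $\tau\ge2$, and $\tau=2$ forces $\{1,2\}$ to be a cover. If $\tau\ge3$, then $\{1,2\}$ is not a cover, so $[3,k+2]\in\mathcal{H}$, and $\mathcal{H}(\bar1)$ is a non-EKR intersecting family on $[2,n]$; I would bound $|\mathcal{H}(1)|$ using that it must cross-intersect the whole, necessarily ``spread out'', family $\mathcal{H}(\bar1)$, combine this with Hilton--Milner applied to $\mathcal{H}(\bar1)$ and with the omission hypothesis, and conclude $|\mathcal{H}|<|\mathcal{J}_3|$ strictly. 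This cross-intersecting estimate between a $(k-1)$-uniform and a $k$-uniform family is another Section~\ref{sec3} lemma, and is where removing the lower bound on $n$ is most delicate.

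This leaves the case $\tau=2$ with cover $\{1,2\}$. Decompose $\mathcal{H}$ into $\mathcal{A}=\{F\setminus\{1,2\}:\{1,2\}\subseteq F\in\mathcal{H}\}\subseteq\binom{[3,n]}{k-2}$, $\mathcal{B}=\{F\setminus\{1\}:1\in F\in\mathcal{H},\,2\notin F\}$ and $\mathcal{C}=\{F\setminus\{2\}:2\in F\in\mathcal{H},\,1\notin F\}$, with $\mathcal{B},\mathcal{C}\subseteq\binom{[3,n]}{k-1}$. Then the intersecting property of $\mathcal{H}$ is equivalent to $\mathcal{B},\mathcal{C}$ being cross-intersecting; compression forces $\mathcal{B},\mathcal{C}$ to be left-compressed with $\mathcal{C}\subseteq\mathcal{B}$ and both shadows $\partial\mathcal{B},\partial\mathcal{C}$ contained in $\mathcal{A}$; and the omission hypothesis gives $|\mathcal{B}|\ge3$ and $|\mathcal{C}|=|\mathcal{H}(\bar1)|\ge3$. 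Since $|\mathcal{A}|\le\binom{n-2}{k-2}$, everything reduces to a third-level Hilton--Milner stability statement, which I would prove by compression in Section~\ref{sec3}: if $\mathcal{B},\mathcal{C}\subseteq\binom{[m]}{t}$ are cross-intersecting, left-compressed and each of size at least $3$, then
\[
|\mathcal{B}|+|\mathcal{C}|\le\binom{m}{t}-\binom{m-t}{t}-\binom{m-t-1}{t-1}-\binom{m-t-2}{t-2}+3,
\]
with equality only for the ``$\mathcal{J}_3$-type'' pair and, when $t\in\{3,4\}$, one further pair. Taking $m=n-2$, $t=k-1$ and using $\binom{n-2}{k-2}+\binom{n-2}{k-1}=\binom{n-1}{k-1}$ gives exactly $|\mathcal{H}|\le|\mathcal{J}_3|$ (cf.\ Fact~\ref{fact0}(ii)). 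For equality one needs $\mathcal{A}=\binom{[3,n]}{k-2}$ and $(\mathcal{B},\mathcal{C})$ extremal; then $\mathcal{C}\subseteq\mathcal{B}$ and $\partial\mathcal{B}\cup\partial\mathcal{C}\subseteq\mathcal{A}$ reassemble $\mathcal{H}$ to $\mathcal{J}_3$ from the ``$\mathcal{J}_3$-type'' pair, to $\mathcal{G}_4$ from the extra pair when $t=4$ (i.e.\ $k=5$), and to $HM$ --- which is excluded --- from the extra pair when $t=3$ (i.e.\ $k=4$), giving precisely the stated equality cases. Besides the bad-shift lemmas, the main remaining work is to prove this cross-intersecting stability statement uniformly for $m\ge2t+1$ and to pin down its extremal pairs; I expect the bad-shift analysis to be the single hardest ingredient.
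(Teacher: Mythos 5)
Your plan is a genuinely different route from the paper's. The paper never fully left-compresses $\mathcal{H}$: Lemma \ref{lem2.1} only produces a family that is stable under shifts $S_{x'y'}$ with $x',y'\in[n]\setminus X_i$ for a small exceptional set $X_i$ (with $|X_i|\le 5$), and the whole layer decomposition $\mathcal{A}_1,\dots,\mathcal{A}_k$ by intersection with the $2k$-set $Y$ in Lemmas \ref{lem2.2}--\ref{lem2.8} is engineered precisely so that partial stability suffices. Your proposal instead reduces to the case of a \emph{fully} left-compressed family and then decomposes by a cover of size $2$, ultimately resting on a cross-intersecting stability lemma. That is an attractive alternative, but I see two concrete gaps.

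First, the pivotal cross-intersecting lemma is false as stated at $t=3$. Take $m=7$, $t=3$ and $\mathcal{B}=\mathcal{C}=\{A\in\binom{[7]}{3}:1\in A\}$. This pair is left-compressed, cross-intersecting, and each member has size $15\ge 3$, yet
\[
|\mathcal{B}|+|\mathcal{C}|=30 \;>\; \binom{7}{3}-\binom{4}{3}-\binom{3}{2}-\binom{2}{1}+3=29.
\]
Reassembling $\mathcal{H}$ from this pair yields $\mathcal{G}_2$ with core $\{1,2,3\}$, which is excluded for $k=4$ by hypothesis --- so the \emph{theorem} survives, but the cross-intersecting lemma, which has no $\mathcal{G}_2$-type exclusion built in, does not. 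It must be restated with an extra hypothesis ruling out the double-star pair (or the derivation must funnel that case elsewhere), and this is not a cosmetic fix since that case is the boundary one. (For $t\ge 4$ the Catalan number $\binom{2t}{t}-\binom{2t}{t-1}$ dominates $3t-3$, so this particular example is no longer a problem, but you would need to verify no other pair beats the bound.)

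Second, and more structurally, the sentence ``After this reduction we may assume $\mathcal{H}$ is left-compressed and still satisfies every hypothesis'' is not available. The situation that the paper goes to great lengths to handle is precisely that the greedy shifting process stalls at a family that is \emph{not} left-compressed, where every remaining non-trivial shift would destroy a hypothesis, and this stalled family does not ``recognisably'' reduce to Theorems \ref{thm1.2}, \ref{thm1.4} and Fact \ref{fact0}. In that situation the paper proves (Lemmas \ref{case1}--\ref{case5}) that one can continue shifting outside a fixed $5$-set $X_i$ without ever re-entering a bad case, obtaining only partial stability; your cover-based argument would have to be reworked to function under partial stability, or you would need a separate self-contained argument for each stalled configuration, and neither is sketched. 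Relatedly, the $\tau\ge 3$ branch, where you appeal to $\mathcal{H}(\bar 1)$ being ``spread out'' and to cross-intersecting estimates without a concrete bound, is not yet an argument: for $n$ as small as $2k+1$, $\binom{n-k-1}{k-1}$ and its relatives are tiny, so the slack over $|\mathcal{J}_3|$ is very small and hand-waving will not suffice. Until these points are filled in --- and I suspect the fully-compressed assumption is the hardest to repair without essentially reinventing the paper's $X_i$-and-$Y$ machinery --- the proposal remains an outline rather than a proof.
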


From now on, we always assume that $\mathcal{H}$ is a maximum intersecting family which satisfies the conditions of Theorem  \ref{thm1.5}, that is   $\mathcal{H}$ is not EKR, not HM, $\mathcal{H} \not\subseteq \mathcal{J}_2$ $(\!\!$ in addition $\mathcal{H} \not\subseteq \mathcal{G}_2, \mathcal{H} \not\subseteq \mathcal{G}_3$ if $k=4$ $\!\!)$ and for any $x\in [n]$, there are at least 3 sets in $\mathcal{H}$ not containing $x$.

We first give some definition related to the shifting method. For $x$ and $y \in [n], x<y$, and $F\in \mathcal{F}$,  we call the following operation a {\it shift}\,:
 $$
 S_{xy}(F)=
  \begin{cases}
  (F\setminus \{y\})\cup \{x\}, & \text{if}  \  x \not \in F, y\in F\, \text{and} \,(F\setminus \{y\})\cup \{x\} \not \in \mathcal{F};\\
  F,& \text{otherwise}.
  \end{cases}
  $$
We say that $F$ is {\it stable} under the shift $S_{xy}$ if $S_{xy}(F)=F$. If $z\in F$ and $z\in S_{xy}(F)$ still, we say that $F$ is {\it stable at $z$} after the shift $S_{xy}$.
For a family $\mathcal{F}$, we define
$$S_{xy}(\mathcal{F})=\{S_{xy}(F): F\in \mathcal{F}\}.$$
Clearly, $\vert S_{xy}(\mathcal{F})\vert = \vert\mathcal{F}\vert$. We say that $\mathcal{F}$ is {\it stable}  if $S_{xy}(\mathcal{F})= \mathcal{F}$ for all $x, y \in [n]$ with $x<y$.

An important property shown in \cite{Fra1987shift} is that if $\mathcal{F}$ is intersecting, then $S_{xy}(\mathcal{F})$ is still intersecting.  Let us rewrite is as a remark.

\begin{remark} \cite{Fra1987shift}\label{shiftpro}
If $\mathcal{F}$ is a maximum intersecting family, then $S_{xy}(\mathcal{F})$ is still a maximum intersecting family.
\end{remark}

 This property guarantees that performing shifts repeatedly to a maximum intersecting family will yield a stable maximum intersecting family. The main difficulty we need to overcome  is to guarantee that we can get a  stable maximum intersecting family with further properties:  not EKR, not HM, $\not\subseteq \mathcal{J}_2$ $(\!\!$ in addition $\not\subseteq \mathcal{G}_2, \not\subseteq \mathcal{G}_3$ if $k=4$ $\!\!)$. The following facts and lemmas are for this purpose.

\begin{fact} \label{fact 2}
The following properties hold. \\
(i) If $S_{xy}(\mathcal{H})$ is EKR $(\!\!$ or HM $\!)$, then $x$ must be the center.\\
(ii) If $S_{xy}(\mathcal{H})\subseteq\mathcal{G}_2$, then the core is
 $\{ x, x_1, x_2 \}$ for some $x_1, x_2 \in [n] \setminus \{x, y\}$.\\
(iii) If $S_{xy}(\mathcal{H})\subseteq\mathcal{J}_2$, then $x$ is the center.\\
(iv) If $S_{xy}(\mathcal{H})\subseteq\mathcal{G}_3$, then $x$ is the center or $x$ is in the core.
\end{fact}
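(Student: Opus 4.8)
The idea is to use that a single shift $S_{xy}$ alters only the two coordinates $x,y$: if $S_{xy}(F)\neq F$ then $y\in F$, $x\notin F$ and $S_{xy}(F)=(F\setminus\{y\})\cup\{x\}$, so every set \emph{created} by the shift contains $x$ and misses $y$, while every $F\in\mathcal{H}$ with $y\notin F$ is fixed and therefore already lies in $S_{xy}(\mathcal{H})$; moreover, for every point $p\notin\{x,y\}$ we have $p\in F\iff p\in S_{xy}(F)$ for all $F$. In each of (i)--(iv) I would argue by contradiction: assuming the distinguished vertex (the center, and for (ii),(iv) also the rest of the core) is not placed as claimed, I would deduce that $\mathcal{H}$ itself is EKR, HM, $\subseteq\mathcal{J}_2$, $\subseteq\mathcal{G}_2$, or $\subseteq\mathcal{G}_3$, contradicting the standing hypotheses on $\mathcal{H}$.

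For (i): if $S_{xy}(\mathcal{H})\subseteq\mathcal{F}_z$ and $z\notin\{x,y\}$, then $z$-membership is preserved and $\mathcal{H}\subseteq\mathcal{F}_z$, so $\mathcal{H}$ is EKR; if $z=y$, no set can be created, so the shift is trivial and $\mathcal{H}=S_{xy}(\mathcal{H})\subseteq\mathcal{F}_y$ --- either way a contradiction, forcing $z=x$. In the HM case I would write $S_{xy}(\mathcal{H})\subseteq\mathcal{G}_k$ with center $z$ and core $E$, noting that $E$ is the unique member of $\mathcal{G}_k$ avoiding $z$. If $z=y$, injectivity of $S_{xy}$ gives at most one moved set (whose image must be $E$), and a direct check then places $\mathcal{H}$ inside a copy of $\mathcal{G}_k$ centered at $y$ or inside $\mathcal{F}_y$. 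If $z\notin\{x,y\}$, $z$-membership is preserved, so at most one $F_0\in\mathcal{H}$ avoids $z$; since $\mathcal{H}$ is intersecting, every $z$-set of $\mathcal{H}$ meets $F_0$, whence $\mathcal{H}\subseteq\mathcal{G}_k$ with center $z$ and core $F_0$ --- again $\mathcal{H}$ is HM. Hence $z=x$.

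The remaining parts follow the same template. For (iii), with $S_{xy}(\mathcal{H})\subseteq\mathcal{J}_2$ of center $z$, kernel $E$ and page set $J$: since $\mathcal{H}\not\subseteq\mathcal{J}_2$ the shift is nontrivial, and, as in the HM case, outside the ``$E$-part'' every member of $\mathcal{J}_2$ contains $z$, so the two options $z=y$ and $z\notin\{x,y\}$ each force $\mathcal{H}$ into some $\mathcal{J}_2$ (with a relocated kernel and pages) or into a star; for $z\notin\{x,y\}$ one exploits that swapping $x\leftrightarrow y$ fixes all coordinates outside $\{x,y\}$, and treats the sub-cases $x\notin E\cup J$, $x\in E$, $x\in J$ separately. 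This gives $z=x$. For (ii) and (iv) the same reasoning applied to the $3$-element core $\{w\}\cup\{a,b\}$ of $\mathcal{G}_2$ (center $w$) and the $4$-element set $\{x'\}\cup E'$ (center $x'$, core $E'$ of size $3$) of $\mathcal{G}_3$ shows that, if the asserted placement fails --- $x$ outside that set, or (for (ii)) $y$ inside it --- then a short case analysis on which core element equals $y$ yields $\mathcal{H}\subseteq\mathcal{G}_2$, respectively $\mathcal{H}\subseteq\mathcal{G}_3$. For $k=4$ this contradicts the hypothesis directly; for $k\ge 5$ it contradicts maximality, since $\mathcal{J}_3$ meets every condition imposed on $\mathcal{H}$ while $|\mathcal{G}_2|,|\mathcal{G}_3|<|\mathcal{J}_3|\le|\mathcal{H}|$. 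Only the claimed placement of the core survives.

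The step I expect to be the main obstacle is precisely the sub-case analysis in (i)--(iv) when the distinguished vertex lies outside $\{x,y\}$ but $x$ or $y$ belongs to the kernel or the page set: there one cannot simply ``transport membership'', and one must identify exactly which forbidden configuration $\mathcal{H}$ falls into and check genuine containment rather than mere proximity. Among these, the four-point core of $\mathcal{G}_3$ and the case $x\in J$ in part (iii) look most delicate, and may need the standing assumption that every point lies outside at least three members of $\mathcal{H}$.
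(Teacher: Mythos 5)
Your general framework --- a shift changes only the coordinates $x,y$, so membership at every other point is preserved, created sets contain $x$ and miss $y$, and $S_{xy}$ is injective on $\mathcal{H}$ --- is the right starting observation, and the argument you give for (i) is complete. (The paper cites \cite{HK2017} for (i) and (ii), so there is no in-paper proof to compare against there.) For (iii) and (iv), though, your plan leaves the decisive step as an unexecuted ``short case analysis,'' and it also misses that the standing hypothesis you relegate to a final ``may need'' remark --- every point of $[n]$ lies outside at least three members of $\mathcal{H}$ --- makes (iii) a one-line count.

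Concretely, the paper's (iii) goes: if the center $z$ of the $\mathcal{J}_2$ satisfied $z\ne x$, then the (at least) three members of $\mathcal{H}$ avoiding $z$ still avoid $z$ after $S_{xy}$ (a set missing $y$ is never moved, and membership at any point other than $x,y$ is preserved by the shift), so $S_{xy}(\mathcal{H})$ has at least three members avoiding $z$; but any $\mathcal{J}_2$ centered at $z$ has exactly two such members. No sub-cases on whether $x$ lies in $E$ or in $J$ are needed. The route you propose instead --- showing $\mathcal{H}$ itself sits inside a relocated $\mathcal{J}_2$ or star --- is both harder and not clearly achievable: you only have the containment $S_{xy}(\mathcal{H})\subseteq\mathcal{J}_2$, not equality, so the ``transport the structure backwards through the shift'' reasoning does not come for free (the paper's Lemma \ref{lem3.2} is about $S_{xy}(\mathcal{F})=\mathcal{J}_2$, not $\subseteq$). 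For (iv) the paper also argues from the opposite direction to yours: it fixes a single $G\in\mathcal{H}\setminus\mathcal{G}_3$ (one exists for $k=4$ by hypothesis), notes that $S_{xy}(G)\ne G$ forces $y\in G$, $x\notin G$, and then directly checks how $S_{xy}(G)=(G\setminus\{y\})\cup\{x\}$ can lie in $\mathcal{G}_3$; this forces $x\in\{x_0\}\cup E$ without having to classify all of $\mathcal{H}$. You should replace your deferred containment arguments with these counting and single-set arguments, and promote the ``three sets miss every point'' property from a possible fallback to the central tool --- as written, the case analyses you postpone are exactly where the proof lives.
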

\begin{proof}
For (i) and (ii), Han and Kohaykawa proved them in \cite{HK2017}. We prove (iii) and (iv) only.

For (iii), suppose that $S_{xy}(\mathcal{H})\subseteq\mathcal{J}_2$ at center $z \in [n] \setminus \{x\}$. Since $\mathcal{H}\not\subseteq\mathcal{J}_2$ at $z$,  there are at least three sets
$E_1, E_2$ and $E_3$ in $\mathcal{H}$ missing $z$,  after doing the shift $S_{xy}$, these 3 sets still miss $z$, so $S_{xy} (\mathcal{H})$ is not contained in $\mathcal{J}_2$ center at $z$.

For (iv), let $S_{xy}(\mathcal{H})\subseteq\mathcal{G}_3$ at center $x_0$ and core $E=\{x_1, x_2, x_3\}$, and let $B=\{x_0, x_1, x_2, x_3\}$. Since $\mathcal{H}\not \subseteq \mathcal{G}_3$, there is a set $G \in \mathcal{H} $ that satisfies one of the following two cases:
(a) $\{y, x_0\} \subseteq G, G \cap E=\emptyset$; (b) $y \in G, x_0 \not \in G, |G \cap E| \in \{1, 2\}$.
If (a) holds, then $x\ne x_0$ and $x$ must be in the core, $y\not \in B$. If (b) holds,
then either $x=x_0$ is the center or $x$ is in the core and $y\not \in B$.
\end{proof}

\begin{remark}\label{remark2.4}
By Fact \ref{fact 2},  if applying $S_{x'y'} (x' < y')$ repeatedly to $\mathcal {H}$, we may reach a family which belong to one of the following  cases.\\
Case 1: a family $\mathcal{H}_1$ such that $S_{xy}(\mathcal{H}_1)$ is EKR with center $x$;\\
Case 2: a family $\mathcal{H}_2$ such that $S_{xy}(\mathcal{H}_2)$ is HM with center $x$;\\
Case 3: a family $\mathcal{H}_3$ such that $S_{xy}(\mathcal{H}_3)\subseteq\mathcal{J}_2$ with center $x$;\\
Case 4: a family $\mathcal{H}_4$ such that $S_{xy}(\mathcal{H}_4)\subseteq\mathcal{G}_2$ with core $\{x, x_1, x_2\}$ for some $\{x_1, x_2\} \in X\setminus \{x, y\}$ $($$k=4$ only$)$;\\
Case 5: a family $\mathcal{H}_5$ such that $S_{xy}(\mathcal{H}_5)\subseteq\mathcal{G}_3$ with center $x$ or $x$ being in the core $($$k=4$ only$)$;\\
Case 6: a stable family $\mathcal{H}_6$ satisfies the conditions of Theorem \ref{thm1.5}, that is we will not meet Cases 1-5 after doing all shifts.
\end{remark}

By Remark \ref{shiftpro}, we know that for any shift $S_{xy}$ on $[n]$ we have $|S_{xy}(\mathcal{H})|=|\mathcal{H}|$ and $S_{xy}(\mathcal{H})$ is also intersecting.  We hope to get a stable family satisfying the conditions of Theorem \ref{thm1.5} after some shifts, that is neither EKR, nor HM, nor contained in $\mathcal{J}_2$ (nor in $\mathcal{G}_2$, $\mathcal{G}_3$ if $k=4$). By Fact \ref{fact0}, we can  assume that a family $\mathcal{G}$ obtained by performing shifts to $\mathcal{H}$ has the property that for any $x$, at least 3 sets in $\mathcal{G}$ do not contain $x$. What we are going to do is: If any case of Cases 1-5 happens, we will not perform $S_{xy}$. Instead we will adjust the shifts as shown in Lemma \ref{lem2.1} to guarantee that the terminating family is a stable family satisfying the conditions of Theorem \ref{thm1.5}. We will prove the following two crucial lemmas  in Section \ref{sec3}.

\begin{lemma}\label{lem2.1}
Let $ i \in [5] $. If we reach $\mathcal{H}_i$ in {\it Case i } in Remark \ref{remark2.4}, then there is a set $X_i \subseteq [n]$ with $|X_i| \le 5$ $($when $k\ge 5$, $|X_i| \le 3$ for  $i\in [3]$$)$, such that after a series of shifts $S_{x'y'} \, (x'<y'\,\,\text{and}\,\, x', y' \in [n]\setminus X_i)$ to $\mathcal{H}_i$, we can reach a stable family satisfying the conditions of Theorem \ref{thm1.5}. Moreover, for any set $G$ in the final family $\mathcal{G}$, we have $G \cap X_i \ne \emptyset$.
\end{lemma}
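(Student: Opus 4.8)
The plan is to handle the five cases by one scheme: isolate the ``bad'' part of $S_{xy}(\mathcal{H}_i)$ inside a bounded coordinate set $X_i$, and thereafter perform only shifts $S_{ab}$ with $a,b\in[n]\setminus X_i$. Suppose then that we have reached $\mathcal{H}_i$ together with a shift $S_{xy}$ ($x<y$) for which $S_{xy}(\mathcal{H}_i)$ falls into Case $i$ of Remark \ref{remark2.4}. By Fact \ref{fact 2} the distinguished centre of $S_{xy}(\mathcal{H}_i)$ is $x$ in Cases 1--3, while in Cases 4 and 5 (which occur only for $k=4$) the point $x$ lies in a core of size at most $3$. Since $\mathcal{H}_i$ is still good, for each forbidden structure that $S_{xy}(\mathcal{H}_i)$ does possess there is a member of $\mathcal{H}_i$ witnessing that $\mathcal{H}_i$ does \emph{not} possess it, and comparing $F$ with $S_{xy}(F)$ for such a witness $F$ shows that every witness either contains $y$ or lies inside the relevant core. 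I would then take $X_i$ to be the union of $\{x,y\}$, the Case-$i$ core when $i\in\{4,5\}$, and at most one or two further points read off from those witnesses; a short case check using only $n\ge 2k+1$ and the counting identities of Fact \ref{fact0} gives $|X_i|\le 5$, with $|X_i|\le 3$ when $i\in[3]$ and $k\ge 5$. By construction every member of $\mathcal{H}_i$ meets $X_i$.

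I would then carry two invariants through all subsequent shifting: for every family $\mathcal{F}$ reachable from $\mathcal{H}_i$ by shifts with both endpoints in $[n]\setminus X_i$, \textbf{(I1)} every member of $\mathcal{F}$ meets $X_i$, and \textbf{(I2)} every point $z\in[n]$ is missed by at least three members of $\mathcal{F}$. Invariant (I1) is immediate, since a shift with both endpoints outside $X_i$ does not change $G\cap X_i$ for any set $G$. Invariant (I2) holds at $\mathcal{H}_i$ by the standing hypothesis of Theorem \ref{thm1.5}; and if some admissible shift were to destroy it, then the resulting maximum intersecting family would have a point missed by only two of its members, so by Fact \ref{fact0} its size, hence $|\mathcal{H}|$, is at most $|\mathcal{K}_2|$, and the desired bound on $|\mathcal{H}|$ follows from the comparison in Fact \ref{fact0}(iii) --- so I may assume (I2) is never lost.

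The crux is to prove that, under (I1)--(I2), no shift $S_{ab}$ with $a,b\in[n]\setminus X_i$ carries $\mathcal{F}$ into any of Cases 1--5. Suppose some such shift did. By Fact \ref{fact 2} the offending structure of $S_{ab}(\mathcal{F})$ has centre $a$, or a core of size at most $3$ containing $a$; in particular all of its distinguished points lie outside $X_i$. Running the argument in the proof of Fact \ref{fact 2} in reverse, any member of $\mathcal{F}$ missing $a$ that $S_{ab}$ leaves fixed still misses $a$ afterwards, which is incompatible with that structure unless that member contains $b$; but by (I2) there are at least three members of $\mathcal{F}$ missing $a$, so (apart from at most the single core set permitted in the HM case) they all contain $b$, are pairwise intersecting, and --- by (I1) --- meet $X_i$ while avoiding $a$. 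Since $X_i$ was chosen exactly so that every member of $\mathcal{H}_i$, hence every member of $\mathcal{F}$, is rigidly pinned to $X_i$, a short combinatorial analysis --- one for each $i$, and most involved for $k=4$, where $\mathcal{F}$ must additionally be kept out of $\mathcal{G}_2$ and $\mathcal{G}_3$ --- shows that no such configuration can exist. \emph{I expect this exclusion step to be the main obstacle}: it is where the internal structure of the five forbidden families must genuinely be used, and where the short range $2k+1\le n\le 3k-3$ has to be handled directly rather than quoted from \cite{HM1967}.

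Granting the exclusion step, shifting inside $[n]\setminus X_i$ from $\mathcal{H}_i$ never leaves the class of good families, and since every proper shift strictly decreases $\sum_{F\in\mathcal{F}}\sum_{j\in F}j$ the process terminates at a family $\mathcal{G}$ that is good, satisfies (I1)--(I2), is stable under all shifts with both endpoints in $[n]\setminus X_i$, and has every member meeting $X_i$. It remains to see that $\mathcal{G}$ may be taken stable under \emph{all} shifts. After relabelling $[n]$ so that $X_i=\{1,\dots,|X_i|\}$, every not-yet-performed shift has the form $S_{ab}$ with $a\in X_i$, hence can only enlarge some $G\cap X_i$ and so preserves (I1); one applies these shifts one at a time, and any one of them that would create a forbidden structure must, by Fact \ref{fact 2}, have its centre or core inside $X_i$ --- which a direct check rules out, using that $\mathcal{G}$ is good with every member pinned to the tiny set $X_i$ and that $n\ge 2k+1$. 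The process therefore terminates at a stable good family $\mathcal{G}$ in which every member meets $X_i$, which is precisely the assertion of the lemma; in the borderline case $k=5$ this terminal family may be $\mathcal{G}_4$, which accounts for the extra extremal configuration recorded in Theorem \ref{thm1.5}.
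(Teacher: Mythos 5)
Your broad framework matches the paper's: isolate a small coordinate set $X_i$ containing $\{x,y\}$ and the core, keep every member pinned to $X_i$, and shift only inside $[n]\setminus X_i$ until stability, invoking Fact \ref{fact 2} and Fact \ref{fact0} along the way. The clean split for $k\ge 5$ (where Cases~4,~5 cannot occur and $|X_i|\le 3$) is also the route the paper takes. However, the proposal has a genuine gap exactly where you flag ``the main obstacle'': you \emph{assert} that a short combinatorial analysis rules out re-entering Cases 1--5 after restricting shifts to $[n]\setminus X_i$, but this is the entire content of the lemma. The paper's proof of this (Lemmas~\ref{case1}--\ref{case5}, Claims~\ref{clm 1}--\ref{clm 6}) is several pages of delicate case analysis, and in Case~1 it is not even a single construction: one starts with $X_1=\{x,y\}$, discovers while shifting that a $\mathcal{G}_2$- or $\mathcal{G}_3$-type obstruction appears, enlarges $X_1$ by three more points read off a witness set, and in sub-case~(c) of Claim~\ref{clm 4} must \emph{switch} to a different five-point $X_1'$ when the first choice fails. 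Your sketch of $X_i$ as ``$\{x,y\}$, the core, and one or two further points'' does not anticipate this retry-and-enlarge structure, and a naive count ($2+\text{core}+2$) can exceed $5$; pinning $|X_i|\le 5$ actually requires the careful choices made in the paper.

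A second issue: the final paragraph, in which you try to upgrade $\mathcal{G}$ to stability under \emph{all} shifts $S_{ab}$ including $a\in X_i$, is both unnecessary and unsupported. The lemma (and its downstream use in Lemma~\ref{lem2.2}, which explicitly invokes ``$\mathcal{G}$ is stable on $[n]\setminus X_i$'') only needs stability under shifts with both endpoints outside $X_i$. Performing further shifts with $a\in X_i$ is precisely what can push the family into EKR, HM, or the other forbidden shapes --- that danger is the whole reason $X_i$ is excluded from the shift range --- and your one-line assertion that ``a direct check rules out'' any bad outcome has no substance. Dropping that paragraph removes a potential error, but it does not repair the central missing argument: the construction of $X_i$ in each of the five cases and the verification (using the degree bounds in Claims~\ref{clm 1}--\ref{clm00} and the explicit sets of Lemmas~\ref{case1}--\ref{case5}) that the pinned, restricted-shift process cannot fall back into Cases 1--5.
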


From now on, let $X_i$ be the corresponding sets in Lemma \ref{lem2.1} for $1\le i \le 5$ and $X_6=\emptyset$ .  For $k\ge 5$ and $i\in \{1, 2, 3, 6\}$, let $Y_i$ be the set of the first $2k-|X_i|$ elements of $[n]\setminus X_i$, and for $k=4$ and $i\in \{1, 2, 3, 4, 5, 6\}$, let $Y_i$ be the first $9-|X_i|$ elements of $[n]\setminus X_i$. Let $Y=Y_i\cup X_i$, then $|Y_i|\ge 2k-4$ and $|Y|=2k$ if $k\ge 5$. If $k=4$ then  $|Y|=9$.
 Let
\begin{align*}
\mathcal{A}_i:&=\{G\cap Y: G\in \mathcal{G}, |G\cap Y|=i\},\\
\widetilde{\mathcal{A}_i}:&=\{G: G\in \mathcal{G}, |G\cap Y|=i\}.
\end{align*}

\begin{lemma} \label{lem2.2}
Let $\mathcal{G}$ be the final stable family guaranteed by Lemma \ref{lem2.1} satisfying the conditions of Theorem \ref{thm1.5}, and let $X_i$ be inherit from Lemma \ref{lem2.1}. In other words, $\mathcal{G}$ is stable;   $\mathcal{G}$ is neither EKR, nor HM, nor contained in $\mathcal{J}_2$ $($nor in $\mathcal{G}_2$, $\mathcal{G}_3$ if $k=4$$)$; for any $x\in [n]$, there are at least 3 sets in $\mathcal{G}$ not containing $x$; and $G\cap X_i\ne \emptyset$ for any $G\in \mathcal{G}$.
Then \\
$(i)$ $\mathcal{A}_1=\emptyset$.\\
$(ii)$ For all $G$ and $G'\in \mathcal{G}$, we have $G\cap G' \cap Y\ne \emptyset$, or equivalently, $\cup_{i=2}^{k}\mathcal{A}_i\cup \mathcal{G}$ is intersecting.
\end{lemma}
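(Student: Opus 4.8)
The plan is to establish $(i)$ by a direct down-shift argument, then prove the assertion of $(ii)$ that $G\cap G'\cap Y\neq\emptyset$ for all $G,G'\in\mathcal{G}$ by the same method carried a little further, and finally observe that the stated equivalence in $(ii)$ is a formal consequence of $(i)$. Throughout I would use the two properties of $\mathcal{G}$ delivered by Lemma~\ref{lem2.1}: every member of $\mathcal{G}$ meets $X_i$, and $\mathcal{G}$ is stable with respect to $[n]\setminus X_i$, the latter meaning that $(G\setminus\{y\})\cup\{x\}\in\mathcal{G}$ whenever $G\in\mathcal{G}$, $x<y$, $x\in[n]\setminus X_i$, $y\in G$ and $x\notin G$. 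I would also use that $Y=X_i\sqcup Y_i$ with $Y_i$ an initial segment of $[n]\setminus X_i$, so that every element of $[n]\setminus Y$ is larger than every element of $Y_i$, and that $|Y_i|\geq k$ in all cases (for $k\geq 5$ the relevant $X_i$ have $|X_i|\leq 3$; for $k=4$, $|Y_i|\geq 9-5=k$).

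For $(i)$: assume some $G\in\mathcal{G}$ has $|G\cap Y|=1$, say $G\cap Y=\{a\}$. Since $\emptyset\neq G\cap X_i\subseteq G\cap Y$, we get $a\in X_i$, and $G=\{a\}\cup T$ with $T\subseteq[n]\setminus Y$, $|T|=k-1$. By hypothesis at least three sets of $\mathcal{G}$ avoid $a$; fix one, $G'$. As $\mathcal{G}$ is intersecting and $a\notin G'$, the set $C:=G\cap G'$ is a nonempty subset of $T$; put $m=|C|\geq 1$. Since $C\subseteq G'\setminus Y$ we have $|G'\cap Y|\leq k-m$, and because $G\cap Y_i=\emptyset$ this gives $|(G\cup G')\cap Y_i|=|G'\cap Y_i|\leq k-m$, hence $|Y_i\setminus(G\cup G')|\geq|Y_i|-(k-m)\geq m$. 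Pick distinct $x_1,\dots,x_m\in Y_i\setminus(G\cup G')$ and write $C=\{z_1,\dots,z_m\}$. Replacing $z_j$ by $x_j$ inside $G$ successively, each step swaps an element $z_j$ (which exceeds $\max Y_i$, hence $>x_j$) of the current set for the smaller element $x_j$, which lies outside that set, so by stability with respect to $[n]\setminus X_i$ each intermediate set stays in $\mathcal{G}$; the terminal set $G^{*}:=(G\setminus C)\cup\{x_1,\dots,x_m\}$ thus lies in $\mathcal{G}$. But $G^{*}\neq G'$ (it contains $x_1\notin G'$) and $G^{*}\cap G'=\emptyset$, contradicting intersectingness. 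Hence $\mathcal{A}_1=\emptyset$.

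For the intersection statement in $(ii)$: suppose $G,G'\in\mathcal{G}$ with $G\cap G'\cap Y=\emptyset$. Then $C:=G\cap G'$ is nonempty and contained in $[n]\setminus Y$; set $m=|C|\geq 1$, so each element of $C$ exceeds $\max Y_i$. Since $G\cap G'\cap X_i=\emptyset$ while $G\cap X_i\neq\emptyset\neq G'\cap X_i$, these two traces on $X_i$ are disjoint, whence $|(G\cup G')\cap X_i|\geq 2$; combined with $|G\cap Y|\leq k-m$, $|G'\cap Y|\leq k-m$ (from $C\subseteq(G\setminus Y)\cap(G'\setminus Y)$) this gives $|(G\cup G')\cap Y_i|\leq 2(k-m)-2$, and so $|Y_i\setminus(G\cup G')|\geq|Y_i|-(2k-2m-2)$. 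This is $\geq m$ as soon as $|Y_i|\geq 2k-m-2$, which holds for every $k\geq 5$ (there $|Y_i|=2k-|X_i|\geq 2k-3$) and for $k=4$ whenever $m\geq 2$ (there $|Y_i|\geq 4\geq 6-m$); in those cases the argument finishes exactly as in $(i)$, by swapping $m$ elements of $Y_i\setminus(G\cup G')$ for the elements of $C$ inside $G$ to reach a set in $\mathcal{G}$ that is disjoint from, yet different from, $G'$, a contradiction. The sole residual case is $k=4$ with $m=1$ and $|X_i|=5$; here I would give a short separate argument using the explicit description of $X_i$ from the proof of Lemma~\ref{lem2.1} --- in the situations where $|X_i|$ can be as large as $5$, every member of $\mathcal{G}$ meets $X_i$ in at least two points, so in fact $|(G\cup G')\cap X_i|\geq 4$, and the same computation now yields $|Y_i\setminus(G\cup G')|\geq 2>m$. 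This establishes $G\cap G'\cap Y\neq\emptyset$ for all $G,G'\in\mathcal{G}$.

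It remains to read off the equivalence in $(ii)$. By $(i)$ together with $G\cap X_i\neq\emptyset$ and $X_i\subseteq Y$, every $G\in\mathcal{G}$ has $|G\cap Y|\geq 2$, so $G\cap Y\in\mathcal{A}_j$ for some $2\leq j\leq k$; thus each member of $\bigcup_{i=2}^{k}\mathcal{A}_i\cup\mathcal{G}$ is of the form $G$ or $G\cap Y$ for some $G\in\mathcal{G}$. For two such members $A$ (arising from $G$) and $A'$ (arising from $G'$) we have $A\cap A'\supseteq G\cap G'\cap Y\neq\emptyset$, so the union is intersecting; conversely, if the union is intersecting then in particular $(G\cap Y)\cap(G'\cap Y)\neq\emptyset$ for all $G,G'\in\mathcal{G}$, i.e.\ $G\cap G'\cap Y\neq\emptyset$. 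The main obstacle in the whole argument is the counting in $(ii)$ --- guaranteeing $|Y_i\setminus(G\cup G')|\geq|G\cap G'|$ so that the down-shift can be performed inside $\mathcal{G}$ --- and in particular the tight residual configuration for $k=4$, which is the one point where one must use more about $X_i$ than the bound on its size.
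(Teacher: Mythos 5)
Your treatment of $(i)$ and of $(ii)$ for $k\ge 5$ is correct and matches the spirit of the paper, with two harmless cosmetic differences: you prove $(i)$ directly rather than deducing it from $(ii)$, and in $(ii)$ you replace the paper's descent on the minimal $|G\cap G'|$ by a single batch swap of all of $C=G\cap G'\cap([n]\setminus Y)$ into $Y_i\setminus(G\cup G')$, which works since the shifts commute with one another on disjoint pairs. The equivalence clause at the end is also fine.

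The gap is in the residual case $k=4$, $m=1$, $|X_i|=5$. Your proposed fix is the blanket claim that ``in the situations where $|X_i|$ can be as large as $5$, every member of $\mathcal{G}$ meets $X_i$ in at least two points,'' but that is \emph{not} what Lemma~\ref{lem2.1} delivers. It is true for Cases 4 and 5 (Lemma~\ref{case4} and Lemma~\ref{case5} explicitly record $|G\cap X_i|\ge 2$ for every $G\in\mathcal{G}$), but it fails for Cases 1, 2 and (part of) 3. For instance, in Case 1 Lemma~\ref{case1} only guarantees $G\cap\{x,y\}\ne\emptyset$ for every $G\in\mathcal{G}$ together with the membership of one specific set, say $\{y,y',z,w\}\in\mathcal{G}$; a set $G\in\mathcal{G}$ with $G\cap X_1=\{y\}$ is entirely consistent with these conclusions, so $|G\cap X_1|=1$ is possible. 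Likewise, in Case 2 Lemma~\ref{case2} allows a set meeting $X_2$ in a single point, and in Case 3 the second alternative of Lemma~\ref{case3} only gives $|G\cap G'|\ge 2$ when $G\cap X_3=\{x\}$, $G'\cap X_3=\{y\}$, not $|G\cap X_3|\ge 2$. Consequently your bound $|(G\cup G')\cap X_i|\ge 4$ does not follow, and the count $|Y_i\setminus(G\cup G')|\ge 2$ collapses.

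What the paper actually does in this tight $k=4$, $m=1$ configuration (Claim~\ref{clm3.11}) is case-specific: after reducing to $|G\cap X_i|=|G'\cap X_i|=1$ with disjoint one-point traces, it invokes the \emph{extra} structure recorded in each of Lemmas~\ref{case1}--\ref{case5} --- the presence in $\mathcal{G}$ of a specific $4$-subset of $X_i$ (which is then disjoint from $G$ or $G'$) in Cases 1--2, the guarantee $|G\cap G'|\ge 2$ in Case 3, and $|G\cap X_i|\ge 2$ only in Cases 4--5 --- rather than a uniform ``$\ge 2$ points in $X_i$'' bound. Your sketch needs to be replaced by this case analysis (or an equivalent one) for the argument to go through.
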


\subsection{Quantitative Part of Theorem \ref{thm1.5}}
\begin{lemma}\label{lem2.3}
 For $k=4$, we have $|\mathcal{A}_1|=0$, $| \mathcal{A}_2 | \le 3, | \mathcal{A}_3 | \le 18$ and $|\mathcal{A}_4 | \le 50$. For $k\ge 5$, we have
$$| \mathcal{A}_i | \le {2k-1 \choose i-1}-{k-1 \choose i-1}-{k-2 \choose i-2}-{k-3 \choose i-3},  \,\, 1\le i \le k-1, $$
$$| \mathcal{A}_k | \le {\frac{1}{2}}{2k \choose k}={2k-1 \choose k-1}-{k-1 \choose k-1}-{k-2 \choose k-2}-{k-3 \choose k-3}+ 3.$$
\end{lemma}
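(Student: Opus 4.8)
The plan is to bound each $|\mathcal{A}_i|$ separately, using two facts throughout: that $\bigcup_{j\ge 2}\mathcal{A}_j$ is intersecting (Lemma~\ref{lem2.2}(ii)), and that every $\mathcal{A}_i$ is a \emph{shifted} family of $i$-subsets of $Y$. The latter holds because $\mathcal{G}$ is stable: if $x<y$ lie in $Y$ then $S_{xy}(\mathcal{G})=\mathcal{G}$ forces $S_{xy}(G)\in\mathcal{G}$ for every $G\in\mathcal{G}$, and tracing onto $Y$ gives $S_{xy}(\mathcal{A}_i)=\mathcal{A}_i$; after relabelling $Y$ I may treat $\mathcal{A}_i$ as a shifted family on $[2k]$ (on $[9]$ when $k=4$). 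The case $i=1$ is exactly $\mathcal{A}_1=\emptyset$, which is Lemma~\ref{lem2.2}(i).

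The top level is easy: for $k\ge 5$, $\mathcal{A}_k$ is an intersecting family of $k$-sets on the $2k$-set $Y$, so for each $k$-subset $A\subseteq Y$ at most one of $A,\,Y\setminus A$ lies in $\mathcal{A}_k$; hence $|\mathcal{A}_k|\le\frac12\binom{2k}{k}=\binom{2k-1}{k-1}$, which is the claimed value since $\binom{k-1}{k-1}=\binom{k-2}{k-2}=\binom{k-3}{k-3}=1$. The substance is the range $2\le i\le k-1$, and the key preparatory step is to force three $k$-sets into $\mathcal{A}_k$. Being stable, $\mathcal{G}$ is a down-set in the shifting order. Since $\mathcal{G}$ is not EKR it is not contained in the star at $1$, so some member misses $1$, whence the shift-minimal such set, $G_1:=[2,k+1]$, lies in $\mathcal{G}\cap\mathcal{A}_k$. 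Since $\mathcal{G}$ is not HM it is not contained in $HM(n,k)$, and examining a member outside $HM(n,k)$ — noting that the only other shift-minimal escapee, $\{1\}\cup[k+2,2k]$, is disjoint from $G_1$ and so cannot belong to $\mathcal{G}$ — forces $G_2:=\{2,\dots,k,k+2\}\in\mathcal{A}_k$. Finally, since $\mathcal{G}\not\subseteq\mathcal{J}_2$ it is not contained in the shifted copy of $\mathcal{J}_2$ with centre $1$, kernel $[2,k]$ and pages $\{k+1,k+2\}$, whose first part is precisely $\{G_1,G_2\}$; given $G_1,G_2\in\mathcal{G}$ and intersectingness, the only way a member $G$ can escape this copy is for $G$ to avoid $1$ and to strictly dominate $G_2$ in the shifting order, which forces a cover of $G_2$ into $\mathcal{A}_k$ — a set $G_3$ equal to $\{2,\dots,k,k+3\}$ or to $\{2,\dots,k-1,k+1,k+2\}$.

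With $G_1,G_2,G_3\in\mathcal{A}_k$ in hand, I would finish the range $2\le i\le k-1$ by a dichotomy. If $\mathcal{A}_i$ is not contained in any star, it is a non-EKR intersecting family of $i$-sets on $Y$, so Theorem~\ref{thm1.2} gives $|\mathcal{A}_i|\le\binom{2k-1}{i-1}-\binom{2k-i-1}{i-1}+1$; as $2k-i-1\ge k$, an elementary binomial inequality (with equality of the two bounds only at $(k,i)=(5,4)$) shows this is at most $\binom{2k-1}{i-1}-\binom{k-1}{i-1}-\binom{k-2}{i-2}-\binom{k-3}{i-3}$. If instead $\mathcal{A}_i$ lies in a star, then by shiftedness it lies in the star at $1$, so $A\mapsto A\setminus\{1\}$ embeds $\mathcal{A}_i$ into the $(i-1)$-subsets of $[2,2k]$; since $\bigcup_j\mathcal{A}_j$ is intersecting and $G_1,G_2,G_3$ all avoid $1$, every image $A\setminus\{1\}$ meets each of $G_1,G_2,G_3$. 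Counting the $(i-1)$-subsets of $[2,2k]$ that meet all three does the job: when $G_3=\{2,\dots,k,k+3\}$ these are exactly the ones meeting $\{2,\dots,k\}$ or containing $\{k+1,k+2,k+3\}$, so their number is $\binom{2k-1}{i-1}-\binom{k}{i-1}+\binom{k-3}{i-4}$, which equals the displayed bound by the identity $\binom{k}{i-1}-\binom{k-3}{i-4}=\binom{k-1}{i-1}+\binom{k-2}{i-2}+\binom{k-3}{i-3}$; the other choice of $G_3$ yields a strictly smaller count.

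For $k=4$ one runs the same scheme on the $9$-set $Y$: force $G_1=\{2,3,4,5\}$, $G_2=\{2,3,4,6\}$ and a $G_3$ into $\mathcal{A}_4$ as above, and, using $\mathcal{G}\not\subseteq\mathcal{G}_2$ and $\mathcal{G}\not\subseteq\mathcal{G}_3$, force further $4$-sets witnessing that $\mathcal{A}_4$ (viewed on $Y$) lies in no copy of $\mathcal{G}_2$ or $\mathcal{G}_3$. Then the star/non-star dichotomy gives $|\mathcal{A}_2|\le 3$ and $|\mathcal{A}_3|\le 18$ — for $\mathcal{A}_3$ the non-star case uses Theorem~\ref{thm1.2} on $[9]$ together with the observation that a $19$-element intersecting family of triples on $[9]$ must be one of the two extremal families of Theorem~\ref{thm1.2} for triples, both excluded by the forced $4$-sets — while $|\mathcal{A}_4|\le 50$ follows from Theorem~\ref{thm1.4} with $n=9$, since $\mathcal{A}_4$ is intersecting but neither EKR, nor HM, nor contained in $\mathcal{J}_2,\mathcal{G}_2,\mathcal{G}_3$, giving $|\mathcal{A}_4|\le 51$ with equality only for those three families. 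I expect the main obstacle to be the structural step: converting each non-degeneracy hypothesis into an explicit set forced into $\mathcal{A}_k$ calls for a delicate shift-down analysis in which one must repeatedly check that intermediate sets do not become disjoint from the sets already forced in (which would contradict intersectingness), and one must then verify that these few forced $k$-sets really confine $\mathcal{A}_i$ — in the star case — to size at most the stated bound, i.e.\ that the binomial bookkeeping is sharp for $i$ near $k$ and that for $k=4$ the handful of extra near-extremal configurations on the small ground set $Y$ are all ruled out.
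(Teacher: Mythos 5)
The central premise of your argument is that $\mathcal{G}$ is a fully stable (shifted) family, so that after relabelling $\mathcal{A}_i$ is a shifted family on $[2k]$ (or $[9]$) and you can force the explicit shift-minimal sets $G_1=[2,k+1]$, $G_2=\{2,\dots,k,k+2\}$, $G_3$ into $\mathcal{A}_k$. That premise is false in the setting of this lemma. By Lemma \ref{lem2.1}, $\mathcal{G}$ is obtained from $\mathcal{H}_i$ by performing shifts $S_{x'y'}$ only with $x',y'\in[n]\setminus X_i$, and the resulting family is stable only on $[n]\setminus X_i$; this is exactly what the proof of Lemma \ref{lem2.2} uses (``Since $\mathcal{G}$ is stable on $[n]\setminus X_i$\,\dots''). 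Moreover Lemma \ref{lem2.1} also guarantees that every $G\in\mathcal{G}$ meets $X_i$, and $X_i$ is generally not a prefix of $[n]$, so the set $[2,k+1]$ (or its relabelled counterpart inside $Y$) typically misses $X_i$ and thus \emph{cannot} lie in $\mathcal{G}$. So the three explicit $k$-sets your counting hinges on are not available, except in the degenerate Case~6 where $X_6=\emptyset$.

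The same issue infects the rest of your plan: the claim ``if $\mathcal{A}_i$ lies in a star, then by shiftedness it lies in the star at $1$'' and the deduction that the sole escapee from $HM(n,k)$ can be shifted down to $\{2,\dots,k,k+2\}$ both require closure of $\mathcal{G}$ under \emph{all} downward shifts, including those into $X_i$, which Lemma \ref{lem2.1} precisely does not provide (and which would in general destroy the non-EKR/non-HM/non-$\mathcal{J}_2$ properties the whole proof is protecting). This is why the paper's proof of Lemma \ref{lem2.3} takes a different tack: it applies Theorem~\ref{thm1.4} to the $i$-uniform family $\mathcal{A}_i$ on $Y$ to reduce to the cases where $\mathcal{A}_i$ is EKR, HM, or (for $i=3$) contained in $\mathcal{G}_2$, and in the EKR/HM case takes an \emph{arbitrary} triple $G_1,G_2,G_3\in\mathcal{G}$ missing the center $x$ (available since at least three members of $\mathcal{G}$ avoid every vertex), writes an inclusion--exclusion bound in terms of the Venn-diagram parameters $t,t_1,\dots,t_6$ of $G_1\cap Y, G_2\cap Y, G_3\cap Y$, and then carries out a monotonicity/optimization argument to show the bound is maximized exactly at the configuration your clean explicit triple would realize. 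Your binomial bookkeeping (the identity $\binom{k}{i-1}-\binom{k-3}{i-4}=\binom{k-1}{i-1}+\binom{k-2}{i-2}+\binom{k-3}{i-3}$ and the count of $(i-1)$-sets meeting $G_1,G_2,G_3$) is correct and in fact reproduces the extremal case of the paper's optimization; the gap is that you have not shown those particular $G_j$ are in $\mathcal{G}$, nor can you in Cases~1--5.
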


\begin{proof}
By Lemma \ref{lem2.2} (i), we have $|\mathcal{A}_1|=0$.

First consider $k=4$. If $|\mathcal{A}_2| \ge 4$, since $\mathcal{A}_2$ is intersecting, it must be a star. Let its center be $x$. Since $\mathcal{A}_2\cup \mathcal{A}_3 \cup \mathcal{A}_4$ is intersecting,  $\mathcal{A}_3$ must be a star with center $x$ and there is at most one set in $\mathcal{A}_4$  missing $x$, this implies that $\mathcal{G}$ is EKR or HM, which contradicts the fact that $\mathcal{G}$ is neither EKR nor HM.

Suppose that $ | \mathcal{A}_3 | \ge 19$. By Theorem \ref{thm1.4}, $\mathcal{A}_3$ must be EKR, HM or $\mathcal{G}_2$.

If $\mathcal{A}_3$ is EKR with center $x$, then since $\mathcal{G}$ is not EKR and $\mathcal{A}_1=\emptyset$,
there must exist $G\in \mathcal{G}$, such that either $x\not\in G$ and $G\cap Y\in \mathcal{A}_2$, or $x\not\in G$ and $G\cap Y\in \mathcal{A}_4$. If the former holds, by the intersecting property of $\mathcal{A}_2\cup \mathcal{A}_3$, every set in $\mathcal{A}_3$ must contain at least one of the elements in $G\cap Y$, so $ | \mathcal{A}_3 | \le 13$, a contradiction. Otherwise, the latter holds and $\mathcal{A}_2$ is a star with center $x$, and all sets of $\mathcal{G}$ missing $x$ lie in $Y$ completely. Recall that the number of these sets is at leat 3, say $x\not\in G_1, G_2, G_3\in \mathcal{G}$. Since $\mathcal{G}$ is not $\mathcal{G}_3$, it's impossible that $G_1, G_2, G_3$  form a 3-star (each member contains a fixed 3-set). If any two sets in $G_1, G_2, G_3$   intersect at $3$ vertices, then $G_1, G_2, G_3$ must be a 2-star. Since $\mathcal{A}_3\cup \mathcal{A}_4$ is intersecting, calculating directly the number of triples of $Y$ containing $x$ and intersecting with $G_1, G_2$ and $G_3$, we have $ | \mathcal{A}_3 | \le 16$, a contradiction. Otherwise, there are two members, w.l.o.g., say, $G_1, G_2$, such that $|G_1\cap G_2|=2$. Since $\mathcal{A}_3\cup \mathcal{A}_4$ is intersecting,  calculating directly  the number of triples of $Y$ containing $x$ and intersecting with $G_1$ and $G_2$, we have $ | \mathcal{A}_3 | \le 17$, also a contradiction. 

If $\mathcal{A}_3$ is HM with center $x$, let $\{z_1, z_2, z_3\}\in \mathcal{A}_3$. By Theorem \ref{thm1.2}, we have $ | \mathcal{A}_3 | \le 19$,
so we may assume $ | \mathcal{A}_3 | = 19$ and $\mathcal{A}_3$ is isomorphic to $HM(9, 3)$. Suppose that there is a set $G$ such that $x\not\in G, G\cap Y\in \mathcal{A}_2$, w.l.o.g., assume $z_1\not\in G$. Since $|Y\setminus(\{x, z_1, z_2, z_3\}\cup G)|\ge 3$,  there is $a\in Y\setminus(\{x, z_1, z_2, z_3\}\cup G)$ such that $\{x, z_1, a\}\cap G=\emptyset$. By the intersecting property of $\mathcal{A}_3\cup \mathcal{A}_4$, we have $\{x, z_1, a\}\not\in \mathcal{A}_3$, so  $ | \mathcal{A}_3 | < 19$, a contradiction. Now we may assume that $\mathcal{A}_2$ is a star with center $x$. Since $\mathcal{G}$ is neither HM nor contained in $\mathcal{G}_3$, there must be a 4-set $G$ in $\mathcal{A}_4$ such that either $x\not\in G$ and $1\le |G\cap \{z_1, z_2, z_3\}|\le 2$, w.l.o.g., assume $z_1\not\in G$ or $x\in G$ and $|G\cap \{z_1, z_2, z_3\}|=0$. But since   $\mathcal{A}_3\cup \mathcal{A}_4$ is intersecting, the latter case will not happen. Assume the former holds. Since $|Y\setminus(\{x, z_1, z_2, z_3\}\cup G)|\ge 2$,  there is $a\in Y\setminus(\{x, z_1, z_2, z_3\}\cup G)$ such that $\{x, z_1, a\}\cap G=\emptyset$. By the intersecting property of $\mathcal{A}_3\cup \mathcal{A}_4$, we have $\{x, z_1, a\}\not\in \mathcal{A}_3$, so  $ | \mathcal{A}_3 | < 19$.

At last, assume that $\mathcal{A}_3 \subseteq \mathcal{G}_2$ with core $\{x_1, x_2, x_3\}$. Since $  \mathcal{A}_3$ is intersecting, by calculating the number of triples in $Y$ containing at least 2 vertices in core $\{x_1, x_2, x_3\}$, we have $ | \mathcal{A}_3 | \le 19$, so we may assume that $ | \mathcal{A}_3 | = 19$. Since $\mathcal{G}\not\subseteq\mathcal{G}_2$, there exists a set $G\in\mathcal{G}$ such that $| G\cap \{x_1, x_2, x_3\}  |\le 1$. w.l.o.g., let $G\cap \{x_1, x_2\}=\emptyset$. Since $ |Y\setminus (\{x_1, x_2, x_3\}\cup G)|\ge 2 $, we can pick $a\in Y\setminus (\{x_1, x_2, x_3\}\cup G)$ such that $G\cap \{x_1, x_2, a\}=\emptyset$. By the intersecting property of $\mathcal{A}_3\cup \mathcal{G}$, we have $\{x_1, x_2, a\}\not\in \mathcal{A}_3$, hence $ | \mathcal{A}_3 | \le 18$, as desired.

So we have proved that $ | \mathcal{A}_3 | \le 18$ for $k=4$.

Next, we prove $ | \mathcal{A}_4 | \le 50$. On the contrary, suppose that $ | \mathcal{A}_4 | \ge 51$. By Theorem \ref{thm1.4}, $\mathcal{A}_4$ must be EKR, HM, or contained in $\mathcal{J}_2$, $\mathcal{G}_2$ or $\mathcal{G}_3$.

 Suppose that $\mathcal{A}_4$ is EKR at $x$. Since $\mathcal{G}$ is not EKR and $\mathcal{A}_1=\emptyset$, there must exist $G\in \mathcal{G}$ such that either $x\not\in G$ and $G\cap Y\in \mathcal{A}_2$ or $x\not\in G$ and $G\cap Y\in \mathcal{A}_3$. If the former holds, since $\mathcal{A}_2\cup \mathcal{A}_4$ is intersecting, by calculating the number of 4-sets in $Y$ containing $x$ and intersecting with $G\cap Y$ directly, we have $ | \mathcal{A}_4 | \le 36$. If the latter holds, since $\mathcal{A}_3\cup \mathcal{A}_4$ is intersecting, by calculating the number of 4-sets in $Y$ containing $x$ and intersecting with $G\cap Y$ directly, we have $ | \mathcal{A}_4 | \le 46$.

 Suppose that $\mathcal{A}_4$ is HM at $x$. Since $\mathcal{G}$ is not HM at $x$, there exists $G\in \mathcal{G}$ such that either $x\not\in G$ and $G\cap Y\in \mathcal{A}_2$ or $x\not\in G$ and $G\cap Y\in \mathcal{A}_3$, since $\mathcal{A}_4$ is HM at $x$ and $\mathcal{A}_2\cup \mathcal{A}_4$ (or $\mathcal{A}_3\cup \mathcal{A}_4$) is intersecting, by calculating the number of 4-subsets containing $x$ and intersecting with $G\cap Y$, and adding $1$ set not containing $x$, we have $|\mathcal{A}_4|\le 37$ (or $|\mathcal{A}_4|\le 47$).

 Suppose that $\mathcal{A}_4\subseteq\mathcal{G}_2$ with core $\{x_1, x_2, x_3\}=A$. By calculating the number of 4-subsets in $Y$ containing at least 2 of $\{x_1, x_2, x_3\}$, we have $ | \mathcal{A}_4 | \le 51$,  so we may assume $ | \mathcal{A}_4 | = 51$. Since $\mathcal{G}\not\subseteq\mathcal{G}_2$, there exists a set $G$ in $\mathcal{G}$ such that $|G\cap A|\le 1, G\cap Y\in \mathcal{A}_2$ or $\mathcal{A}_3$.  w.l.o.g., let $G\cap \{x_1, x_2\}=\emptyset$. Since $ |Y\setminus (A\cup G)|\ge 2 $, we can pick $a, b \in Y\setminus (A\cup G)$ such that $(G\cap Y) \cap \{x_1, x_2, a, b\}=\emptyset$. By the intersecting property of $\mathcal{A}_2\cup\mathcal{A}_3\cup \mathcal{A}_4$, we have $\{x_1, x_2, a, b\}\not\in \mathcal{A}_4$. Hence $ | \mathcal{A}_4 | \le 50$, as desired.

  Suppose that $\mathcal{A}_4\subseteq\mathcal{G}_3$ with core $\{x_1, x_2, x_3\}$ and center $x$. By direct calculation, $ | \mathcal{A}_4 | \le 51$, so we may assume $ | \mathcal{A}_4 | = 51$ and $\mathcal{A}_4=\mathcal{G}_3$.  Since $\mathcal{G}\not\subseteq\mathcal{G}_3$, there must be $G\in \mathcal{G}$ and $G\cap Y\in \mathcal{A}_2  \text{ or }   \mathcal{A}_3$,  such that either $x\not \in G$ and $\{x_1, x_2, x_3\}\not\subseteq G\cap Y$ or $x \in G$ and $\{x_1, x_2, x_3\}\cap (G\cap Y) =\emptyset$.  By the intersecting property of $\mathcal{A}_2\cup \mathcal{A}_3 \cup \mathcal{A}_4$, in either case, we have $\mathcal{A}_4\neq\mathcal{G}_3$ and $ | \mathcal{A}_4 | < 51$.

  At last, suppose that $\mathcal{A}_4\subseteq\mathcal{J}_2$ with center $x$, kernel $\{x_1, x_2, x_3\}$ and the set of pages $\{x_4, x_5\}$. By Theorem 1.4, we may assume $ | \mathcal{A}_4 | = 51$ and $\mathcal{A}_4=\mathcal{J}_2$. Since $\mathcal{A}_2\cup \mathcal{A}_3\cup \mathcal{A}_4$ is intersecting, there is no member in $\mathcal{A}_2  \text{ or }   \mathcal{A}_3$ avoiding $x$. And each member in $\mathcal{A}_2$ must interset with $\{x_1, x_2, x_3\}$, each member in $\mathcal{A}_3$ must interset with $\{x_1, x_2, x_3\}$ or contain $\{x_4, x_5\}$, to satisfy these conditions, $G$ must be contained in $\mathcal{J}_2$, a contradiction.

  So we have proved that $\mathcal{A}_4\le 50$ for $k=4$.

Next consider $k\ge 5$.   Suppose on the contrary that there exists $i\in \{2, \dots, k-1\}$ such that
\begin{equation}\label{eq3}
| \mathcal{A}_i |  > {2k-1 \choose i-1}-{k-1 \choose i-1}-{k-2 \choose i-2}-{k-3 \choose i-3}.
\end{equation}
Note that for $i=2$,
\begin{equation}
{2k-1 \choose i-1} - {k-1\choose i-1} - {k-2\choose i-2} - {k-3 \choose i-3}=k-1\nonumber.
\end{equation}
If $|\mathcal{A}_2|\ge k \,(\, k\ge 5\, )$, then $\mathcal{A}_2$ is EKR, moreover, since $\mathcal{A}_2\cup \mathcal{G}$ is intersecting, $\mathcal{G}$ must be EKR or HM, a contradiction. Hence $|\mathcal{A}_2|\le k-1$, as desired.

Now consider $i\ge 3$. Under the assumption (\ref{eq3}), we claim that
\begin{equation}\label{eq4}
| \mathcal{A}_i | > {2k-1 \choose i-1}-{2k-i-1 \choose i-1}-{2k-i-2 \choose i-2}+2.
\end{equation}
Let us explain inequality (\ref{eq4}). We write
\begin{equation}\label{5}
{2k-i-2 \choose i-2}={2k-i-3 \choose i-2}+{2k-i-3 \choose i-3}.
\end{equation}
For $k\ge 5$ and $3 \le i \le k-1$, we have
\begin{equation}\label{6}
{2k-1-i \choose i-1}-{k-1 \choose i-1}={k-1 \choose i-2}+{k \choose i-2}+\dots+{2k-2-i \choose i-2} \ge4,
\end{equation}
\begin{equation}\label{7}
{2k-i-3 \choose i-2}-{k-2 \choose i-2}\ge 0,\,\,\,
{2k-i-3 \choose i-3}-{k-3 \choose i-3} \ge 0,
\end{equation}
Combining (\ref{eq3}), (\ref{5}), (\ref{6}) and (\ref{7}), we obtain (\ref{eq4}).
Since $\mathcal{A}_i$ is intersecting, we may assume, by Theorem \ref{thm1.4} that
$\mathcal{A}_i$ is EKR or HM or for $i=3$, $\mathcal{A}_i\subseteq\mathcal{G}_2$.



Case (i): $\mathcal{A}_i$ is EKR or HM at center $x$.

In this case $\mathcal{A}_i$ contains at most 1 $i$-set missing $x$. Recall that there are at least three sets missing $x$ in $\mathcal{G}$. Pick three sets $G_1, G_2, G_3\in \mathcal{G}$ missing $x$. Denote \\
\begin{minipage}[b]{0.5\linewidth}
\begin{flushleft}
\begin{align*}
T&=G_1\cap G_2\cap G_3\cap Y, \,t=|T|,\\
 T_1&=(G_1\cap Y)\setminus (G_2\cup G_3),\, t_1=|T_1|,\\
T_2&=(G_2\cap Y)\setminus (G_1\cup G_3), \,t_2=|T_2|, \\
 T_3&=(G_3\cap Y)\setminus (G_1\cup G_2),\, t_3=|T_3|, \\
T_4&=(G_1\cap G_2\cap Y)\setminus G_3,\, t_4=|T_4|,\\
 T_5&=(G_1\cap G_3\cap Y)\setminus G_2, \,t_5=|T_5|,\\
T_6&=(G_2\cap G_3\cap Y)\setminus G_1,\, t_6=|T_6|.
\end{align*}
\end{flushleft}
\end{minipage}
\hspace{-1cm}
\begin{minipage}[b]{0.5\linewidth}
\begin{flushright}
\includegraphics[scale=0.39]{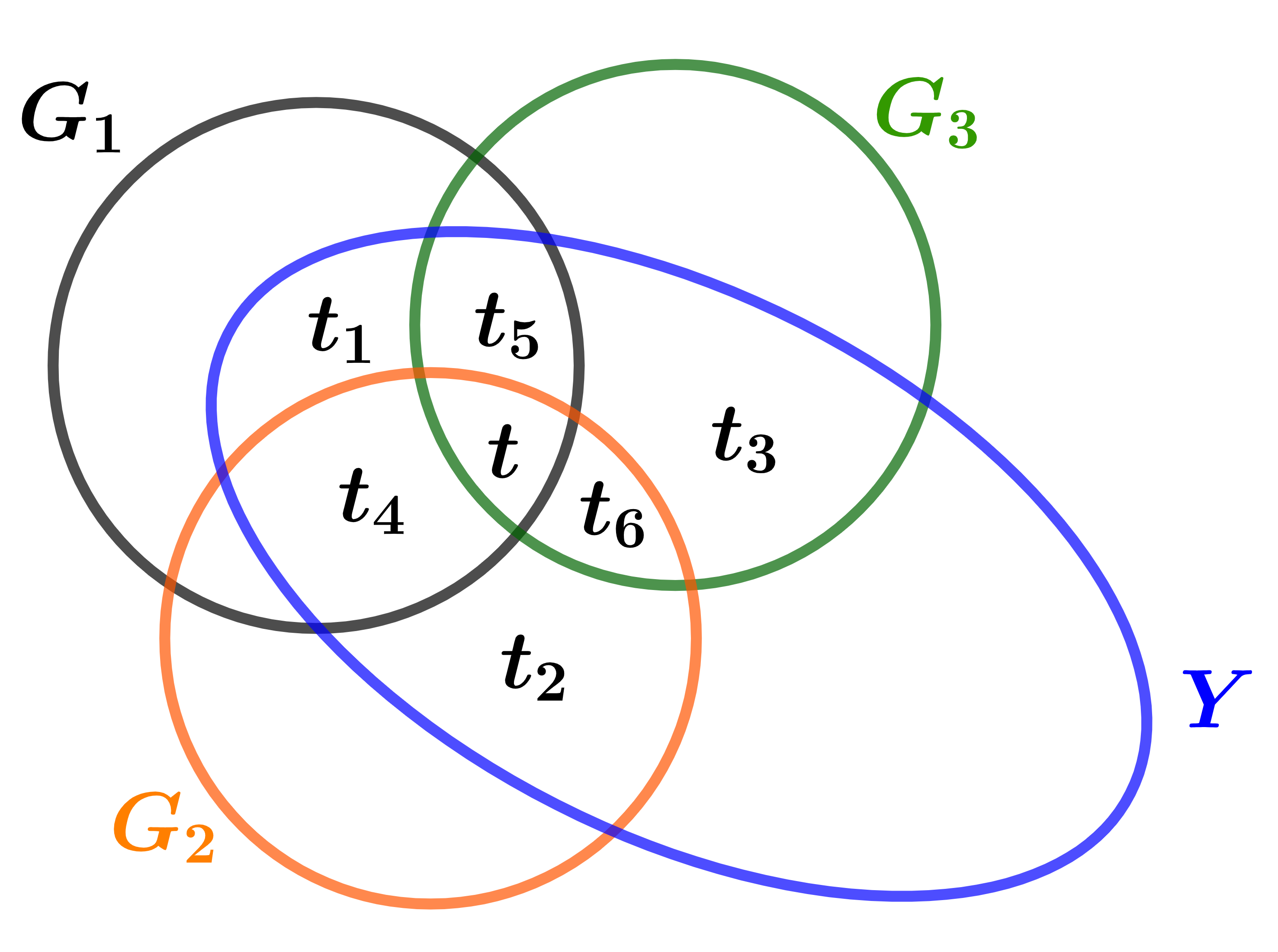}
\end{flushright}
\end{minipage}

Clearly, $t+t_1+t_4+t_5\le k$, $t+t_2+t_4+t_6\le k$, $t+t_3+t_5+t_6\le k$. By Lemma \ref{lem2.2} $\mathcal{A}_i \cup \{G_1\cap Y, G_2\cap Y, G_3\cap Y\}$ is intersecting. Applying Inclusion-Exclusion principle, we have
\begin{equation}\label{eq5}
\begin{split}
 \mathcal{A}_i\le{2k-1 \choose i-1}-{2k-1-t-t_1-t_4-t_5 \choose i-1}
-{2k-1-t-t_2-t_4-t_6 \choose i-1}\\-{2k-1-t-t_3-t_5-t_6 \choose i-1}
+{2k-1-t-t_1-t_2-t_4-t_5-t_6 \choose i-1}\\+{2k-1-t-t_1-t_3-t_4-t_5-t_6 \choose i-1}
+{2k-1-t-t_2-t_3-t_4-t_5-t_6 \choose i-1}\\-{2k-1-t-t_1-t_2-t_3-t_4-t_5-t_6 \choose i-1}+c,
\end{split}
\end{equation}
where $c=0$ (if $\mathcal{A}_i$ is EKR) or $1$ (if $\mathcal{A}_i$ is HM).
Denote the right side of equality (\ref{eq5}) by $f$.
We rewrite it as
\begin{equation}\label{eq6}
\begin{split}
f={2k-1 \choose i-1}-{2k-2-t-t_1-t_4-t_5 \choose i-2}-\dots -{2k-1-t-t_1-t_3-t_4-t_5-t_6 \choose i-2}\\
-{2k-2-t-t_2-t_4-t_6 \choose i-2}-\dots-{2k-1-t-t_1-t_2-t_4-t_5-t_6 \choose i-2}\\
- {2k-2-t-t_3-t_5-t_6 \choose i-2}-\dots-{2k-1-t-t_2-t_3-t_4-t_5-t_6 \choose i-2}\\
-{2k-1-t-t_1-t_2-t_3-t_4-t_5-t_6 \choose i-1}+c.
\end{split}
\end{equation}
We can see that the right side of (\ref{eq6}), consequently (\ref{eq5}) does not decrease as $t+t_1+t_4+t_5, t+t_2+t_4+t_6, t+t_3+t_5+t_6$ increase. Since $t+t_1+t_4+t_5, t+t_2+t_4+t_6, t+t_3+t_5+t_6\le k,$ we can substitute $t+t_1+t_4+t_5=k, t_2+t_4+t_6=k-t, t_3+t_5+t_6=k-t$ into inequality (\ref{eq5}), and this will not decrease $f$. So we have
\begin{equation}\label{eq7}
\begin{split}
|\mathcal{A}_i| &\le
{2k-1\choose i-1}-3{k-1\choose i-1}+{t+t_4-1\choose i-1}+{t+t_5-1\choose i-1}+{t+t_6-1\choose i-1}\\
&-{t+t_5-t_2-1 \choose i-1} +c\\
&={2k-1\choose i-1}-3{k-1\choose i-1}+{t+t_4-1\choose i-1}+{t+t_6-1\choose i-1}+{t+t_5-2\choose i-2}\\
&+\dots+{t+t_5-t_2-1 \choose i-2}+c\\
&\triangleq g.
 \end{split}
\end{equation}
Clearly, $g$ does not decrease as $t+t_4, t+t_5, t+t_6$ increase and $t+t_4\le k-1, t+t_5\le k-1$  $t+t_6\le k-1$. If $t+t_5-t_2-1\ge k-3$, then
\[
\begin{split}
 |\mathcal{A}_i| &\le {2k-1\choose i-1}-3{k-1\choose i-1}+3{k-2\choose i-1}-{k-3 \choose i-1}+c\\
         &={2k-1\choose i-1}-{k-1\choose i-1}-{k-2\choose i-2}-{k-3 \choose i-3}+c.
 \end{split}
\]
The equality holds only if $t=k-1, t_1=t_2=t_3=1, t_4=t_5=t_6=0$.
 If $t+t_5-t_2-1\le k-4$ ($\ast$), then $t\le k-2$ since $t=k-1$ implies $t_5=0$ and combining with ($\ast$), we have $t_2\ge2$, so $t+t_2\ge k+1$, a contradiction. Since $t+t_4\le k-1, t+t_5\le k-1$ and $t+t_6\le k-1$, by (\ref{eq6}) and (\ref{eq7}), taking $t+t_1+t_4+t_5=k, t+t_2+t_4+t_6=k, t+t_3+t_5+t_6=k$ and $t+t_4=k-1, t+t_5=k-1, t+t_6=k-1$ (this implies that $t=k-2, t_4=t_5=t_6=1$ and $t_1=t_2=t_3=0$) does not decrease $f$. So
\[
\begin{split}
g & \le {2k-1\choose i-1}-3{k-1\choose i-1}+3{k-2\choose i-1}-{k-2 \choose i-1}+c\\
&={2k-1\choose i-1}-{k-1\choose i-1}-{k-2\choose i-2}-{k-3 \choose i-3}-{k-3 \choose i-2}+c\\
&\le {2k-1\choose i-1}-{k-1\choose i-1}-{k-2\choose i-2}-{k-3 \choose i-3}-2+c.
\end{split}
\]
So
$$ |\mathcal{A}_i| \le {2k-1\choose i-1}-{k-1\choose i-1}-{k-2\choose i-2}-{k-3 \choose i-3}+c.$$
To reach $c=1$, there is a set $A$ in $\mathcal{A}_i$ not containing $x$. Let $G_1$ be such that $G_1\cap Y=A$. So $|G_1\cap Y|= i\le k-1$. This implies that $t+t_1+t_4+t_5\le k-1$.
In view of (\ref{eq5}) and (\ref{eq6}), $|\mathcal{A}_i|$ strictly decreases as $t+t_1+t_4+t_5$ strictly decreases.
So we have
\[
|\mathcal{A}_i|\le {2k-1\choose i-1}-{k-1\choose i-1}-{k-2\choose i-2}-{k-3 \choose i-3},
\]
as desired.

Case (ii): For $i=3$, $\mathcal{A}_i\subseteq\mathcal{G}_2$ with core, say $\{x_1, x_2, x_3\}$.

By direct calculation, we have $|\mathcal{A}_3|\le 3(2k-3)+1=6k-8$. When $k\ge 5$, we have
$$6k-8< {2k-1\choose 2}-{k-1\choose 2}-{k-2\choose 1}-{k-3\choose 0},$$
 as desired.
\end{proof}

\begin{lemma}\label{lem2.8}
Let $\mathcal{G}$ be the final stable family as in Lemma \ref{lem2.2}. Then
\[
|\mathcal{G}|\le {n-1 \choose k-1} -{n-k-1 \choose k-1} -{n-k-2 \choose k-2} -{n-k-3 \choose k-3}+3.
\]
\end{lemma}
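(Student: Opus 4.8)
The plan is to bound $|\mathcal{G}|$ by partitioning $\mathcal{G}$ according to the size of the intersection of each member with the ``core block'' $Y = X_i \cup Y_i$, which has exactly $2k$ elements when $k \ge 5$ (and $9$ elements when $k=4$). Concretely, I write $\mathcal{G} = \bigcup_{i} \widetilde{\mathcal{A}_i}$, where $\widetilde{\mathcal{A}_i}$ collects the members $G$ with $|G \cap Y| = i$. By Lemma \ref{lem2.2}(i) there are no members with $|G \cap Y| = 1$, and since every $G \in \mathcal{G}$ meets $X_i \subseteq Y$ (the last assertion of Lemma \ref{lem2.1}), there are none with $|G \cap Y| = 0$ either; so the union runs over $2 \le i \le k$. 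Thus $|\mathcal{G}| = \sum_{i=2}^{k} |\widetilde{\mathcal{A}_i}|$.

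Next I estimate each $|\widetilde{\mathcal{A}_i}|$ in terms of $|\mathcal{A}_i|$. For $2 \le i \le k-1$, a member $G$ with trace $G \cap Y \in \mathcal{A}_i$ is determined by choosing the remaining $k - i$ elements of $G$ outside $Y$, and there are $\binom{n - 2k}{k-i}$ such choices when $k \ge 5$ (respectively $\binom{n-9}{k-i}$ when $k=4$); hence $|\widetilde{\mathcal{A}_i}| \le |\mathcal{A}_i| \binom{n-2k}{k-i}$. For $i = k$ we simply have $|\widetilde{\mathcal{A}_k}| = |\mathcal{A}_k|$ since such a $G$ lies entirely inside $Y$. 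Summing and inserting the bounds on $|\mathcal{A}_i|$ from Lemma \ref{lem2.3}, I get
\[
|\mathcal{G}| \le \sum_{i=2}^{k-1} \left[ \binom{2k-1}{i-1} - \binom{k-1}{i-1} - \binom{k-2}{i-2} - \binom{k-3}{i-3} \right] \binom{n-2k}{k-i} + |\mathcal{A}_k|,
\]
and $|\mathcal{A}_k| \le \binom{2k-1}{k-1} - \binom{k-1}{k-1} - \binom{k-2}{k-2} - \binom{k-3}{k-3} + 3$ for $k \ge 5$ (with the analogous numeric bounds for $k=4$).

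The final step is a Vandermonde-type identity: for each fixed term I use $\sum_{i} \binom{a}{i-1}\binom{n-2k}{k-i} = \binom{n-1}{k-1}$ with $a = 2k-1$, and similarly $\sum_i \binom{k-1}{i-1}\binom{n-2k}{k-i} = \binom{n-k-1}{k-1}$, $\sum_i \binom{k-2}{i-2}\binom{n-2k}{k-i} = \binom{n-k-2}{k-2}$, $\sum_i \binom{k-3}{i-3}\binom{n-2k}{k-i} = \binom{n-k-3}{k-3}$, being careful that the $i=k$ contributions of the first three sums are exactly $1$, $1$, $1$, which is why the ``$+3$'' in the $|\mathcal{A}_k|$ bound absorbs them correctly and no extra additive constant survives. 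This telescopes to precisely $\binom{n-1}{k-1} - \binom{n-k-1}{k-1} - \binom{n-k-2}{k-2} - \binom{n-k-3}{k-3} + 3$. For $k = 4$ I verify the inequality directly from $|\mathcal{A}_2| \le 3$, $|\mathcal{A}_3| \le 18$, $|\mathcal{A}_4| \le 50$ and the corresponding small binomial coefficients. The main obstacle is bookkeeping: making sure the ranges of summation line up so that the Vandermonde convolution applies cleanly (in particular handling the boundary term $i=k$ separately, since there the ``$\binom{n-2k}{0}=1$'' factor must be reconciled with the stronger $|\mathcal{A}_k|$ bound rather than the generic $|\mathcal{A}_i|$ bound), and checking that the inequalities in Lemma \ref{lem2.3} for $i \le k-1$ are strong enough that no slack accumulates across the sum.
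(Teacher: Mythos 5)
Your proposal is correct and follows essentially the same route as the paper: decompose $\mathcal{G}$ by the trace size on $Y$, bound $|\widetilde{\mathcal{A}_i}|\le|\mathcal{A}_i|\binom{n-|Y|}{k-i}$, insert the Lemma~\ref{lem2.3} estimates, and collapse by Vandermonde convolution (with the $k=4$ case checked numerically). One cosmetic slip: it is the \emph{last} three Vandermonde sums (those with upper indices $k-1$, $k-2$, $k-3$) whose $i=k$ contributions equal $1,1,1$, not the first three.
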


\begin{proof}
Note that for any $A\in\mathcal{A}_i$, there are at most ${n-|Y|}\choose k-i$ $k$-sets in $\mathcal{G}$ containing $A$. For $k=4$, we have
\begin{align}
|\mathcal{G}|\le \sum_{i=1}^{4}|\mathcal{A}_i|{n-9\choose 4-i}.\nonumber
\end{align}
By Lemma \ref{lem2.3},
\begin{align}\label{eq8}
|\mathcal{G}|&\le 3{n-9\choose 2}+18{n-9\choose 1}+50\nonumber\\
&=\frac{3}{2}n^2-\frac{21}{2}n+23\nonumber\\
&={n-1\choose 3}-{n-5\choose 3}-{n-6\choose 2}-{n-7\choose 1}+3.
\end{align}

For $k\ge5$, we have
\begin{align}\label{eq9}
|\mathcal{G}|&\le \sum_{i=1}^{k}|\mathcal{A}_i|{n-2k\choose k-i} \nonumber  \\
&\!\!\!\!\!\! \!\! \overset{\text{Lemma \ref{lem2.3}}}{\le} 3+\sum_{i=1}^{k}
\left( {2k-1\choose i-1}-{k-1\choose i-1}-{k-2\choose i-2}-{k-3\choose i-3} \right){n-2k\choose k-i} \nonumber\\
&={n-1\choose k-1}-{n-k-1\choose k-1}-{n-k-2\choose k-2}-{n-k-3\choose k-3}+3.
\end{align}

\end{proof}

By Lemma \ref{lem2.8}, we have obtained the quantitative part of Theorem \ref{thm1.5}.


\subsection{Uniqueness Part of Theorem \ref{thm1.5}}
 Let $\mathcal{G}$ be a $k$-uniform family such that the equality holds in Lemma \ref{lem2.8} .We first show the structure of $\mathcal{G}$.

\begin{theorem}\label{thm2.9}
Let $\mathcal{G}$ be a family as in Lemma \ref{lem2.8} such that the equality holds. If $k=5$, then $\mathcal{G}= \mathcal{J}_3$ or $\mathcal{G}_4$; if $k\ne5$, then $\mathcal{G}= \mathcal{J}_3$.\end{theorem}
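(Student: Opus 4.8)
The plan is to identify, from the case analysis in the proof of Lemma~\ref{lem2.3} together with Lemma~\ref{lem2.8}, exactly which configurations of $\mathcal{A}_2,\dots,\mathcal{A}_k$ can force equality, and then to ``lift'' this information from the trace family on $Y$ back to $\mathcal{G}$ on $[n]$. First I would observe that equality in the chain $(\ref{eq9})$ (resp. $(\ref{eq8})$ for $k=4$) forces $|\mathcal{A}_i|$ to attain the bound of Lemma~\ref{lem2.3} for every $i$ with $n-2k\choose k-i$ nonzero, and moreover that every $A\in\mathcal{A}_i$ extends to exactly $n-|Y|\choose k-i$ sets of $\mathcal{G}$, i.e. $\mathcal{G}$ restricted to $Y$ is ``full above its trace''. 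So the whole problem reduces to: determine the stable intersecting trace families on the ground set $Y$ (of size $2k$, or $9$ if $k=4$) that simultaneously maximize every $|\mathcal{A}_i|$ and are not EKR, not HM, not $\subseteq\mathcal{J}_2$ (and not $\subseteq\mathcal{G}_2,\mathcal{G}_3$ if $k=4$), subject to $\bigcup_i\mathcal{A}_i\cup\mathcal{G}$ being intersecting.

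Next I would run the equality discussion inside Case~(i) of Lemma~\ref{lem2.3}: there the bound $|\mathcal{A}_i|\le {2k-1\choose i-1}-{k-1\choose i-1}-{k-2\choose i-2}-{k-3\choose i-3}+c$ was obtained, and equality was shown to force $c=1$ impossible together with the profile $t=k-1$, $t_1=t_2=t_3=1$, $t_4=t_5=t_6=0$ of the three sets $G_1,G_2,G_3\in\mathcal{G}$ missing the common center $x$. This says precisely that the sets of $\mathcal{G}$ avoiding $x$ all live inside a fixed $(k+2)$-subset $\{x_1,x_2,x_3\}\cup E$ with $|E|=k-1$ arranged as in the definition of $\mathcal{J}_3$ (kernel $E$, pages $\{x_1,x_2,x_3\}$), and that $\mathcal{A}_i$ is EKR/HM at $x$ with exactly the maximal set of $i$-subsets of $Y$ hitting all three $G_j$'s — which is the trace of $\mathcal{J}_3$. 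So when $k\ge 6$ the only possibility is $\mathcal{G}=\mathcal{J}_3$. For $k=5$ one must additionally check what happens when the relevant binomial ``slack'' in $(\ref{6})$--$(\ref{7})$ degenerates, allowing a second extremal configuration; a direct comparison of $|\mathcal{J}_3|$ with $|\mathcal{G}_4|$ (both equal the right-hand side of $(\ref{eq2})$-type identity when $k=5$) shows $\mathcal{G}_4$ is the extra case, and one verifies $\mathcal{G}_4$ is indeed stable, intersecting, not EKR/HM/$\subseteq\mathcal{J}_2$. For $k=4$ the same bookkeeping, using $|\mathcal{A}_2|\le3$, $|\mathcal{A}_3|\le 18$, $|\mathcal{A}_4|\le 50$ with each inequality tight, together with the equality analysis in the $\mathcal{G}_2$-subcase, pins down $\mathcal{G}=\mathcal{J}_3$ (the candidate $\mathcal{K}_2$ and $\mathcal{J}_3$ coincide in size only in the range $2k+1\le n\le 3k-3$, which is excluded here since Theorem~\ref{thm1.5} is the $n\ge 3k-2$ regime via Fact~\ref{fact0}).

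Concretely the key steps, in order, are: (1) equality in Lemma~\ref{lem2.8} $\Rightarrow$ every $|\mathcal{A}_i|$ is extremal and $\mathcal{G}$ is the full ``up-set'' above $\bigcup_i\mathcal{A}_i$ on $Y$; (2) rule out the EKR-only possibility ($c=0$ forced by the non-EKR hypothesis combined with $\mathcal{A}_1=\emptyset$), so $\mathcal{A}_k$ must contain a set missing the center, putting us in the $c=1$ analysis; (3) from the equality profile $(t,t_1,\dots,t_6)=(k-1,1,1,1,0,0,0)$ deduce that the three (hence all) sets of $\mathcal{G}$ missing $x$ form the ``$\mathcal{J}_3$ bottom'': a fixed kernel $E$ of size $k-1$ plus three pages, each missing set being $E\cup\{x_j\}$; (4) deduce that the sets of $\mathcal{G}$ containing $x$ are exactly those hitting $E$ or containing two pages, i.e. $\mathcal{G}=\mathcal{J}_3$; (5) treat $k=5$ and $k=4$ separately to recover the extra family $\mathcal{G}_4$ when $k=5$, and to confirm uniqueness when $k=4$. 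I expect step~(3)--(4), the \emph{lifting} argument — showing that the rigidity forced on the three witness sets $G_1,G_2,G_3$ and on $\mathcal{A}_i\cap Y$ propagates to \emph{all} of $\mathcal{G}$ on the full ground set $[n]$, with no freedom left — to be the main obstacle, since one must carefully use stability (the shifts with endpoints outside $X_i$) and the ``full up-set'' conclusion to exclude hybrid families that agree with $\mathcal{J}_3$ on $Y$ but differ outside; the binomial-degeneracy check at $k=5$ is the second, more computational, hurdle.
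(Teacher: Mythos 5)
Your overall strategy — read off structure from the equality case of Lemma~\ref{lem2.3} — is correct, but you chose a substantially more complicated entry point than the paper, and a couple of your bookkeeping remarks are confused.

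The paper does not go back into the $(t,t_1,\dots,t_6)$ profile analysis of Case~(i) of Lemma~\ref{lem2.3} at all. Equality in (\ref{eq8})/(\ref{eq9}) forces equality in Lemma~\ref{lem2.3} for \emph{every} level $i$, and the paper exploits only the level $i=2$: $|\mathcal{A}_2|=k-1$. Since $\mathcal{A}_2$ is an intersecting $2$-uniform family with $k-1\ge 3$ members, it is a star (or, for $k=4$, possibly a triangle, which is immediately ruled out because it would force $\mathcal{G}\subseteq\mathcal{G}_2$). Once $\mathcal{A}_2$ is a star at $x$ with leaves $E=\{x_1,\dots,x_{k-1}\}$, Lemma~\ref{lem2.2}(ii) shows that every $G\in\mathcal{G}$ missing $x$ must contain all of $E$, and there are at least three such sets by hypothesis; by maximality this leaves only $\mathcal{G}=\mathcal{G}_{k-1}$ or $\mathcal{G}=\mathcal{J}_j$ with $3\le j\le k-1$, and comparing sizes with the extremal value (\ref{eq2}) leaves $\mathcal{J}_3$ (plus $\mathcal{G}_4$ when $k=5$, plus the $\mathcal{G}_3$-exclusion when $k=4$). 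This sidesteps entirely your step (3)–(4) ``lifting'' problem, which you correctly flag as the hard part of your route: extracting a common kernel from the $(k-1,1,1,1,0,0,0)$ profile of \emph{one} triple $G_1,G_2,G_3$ and then propagating it to all sets of $\mathcal{G}$ missing $x$ requires rerunning the profile argument over all triples, whereas the $\mathcal{A}_2$-star argument gives the kernel in one stroke.

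Two concrete points in your sketch are off. First, in step (2) you write ``rule out the EKR-only possibility ($c=0$ forced \dots), so $\mathcal{A}_k$ must contain a set missing the center, putting us in the $c=1$ analysis''; this is internally inconsistent and also not what happens: the equality analysis in Lemma~\ref{lem2.3} shows that $c=1$ yields a \emph{strictly smaller} bound, so equality forces $c=0$, i.e.\ each $\mathcal{A}_i$ with $i<k$ is EKR at $x$, and the three witnesses $G_1,G_2,G_3$ all sit in $\widetilde{\mathcal{A}_k}$ (their traces have size $k$ because $t+t_j=k$). Second, the $k=5$ coincidence has nothing to do with a ``degeneracy in (\ref{6})–(\ref{7})'' — those estimates are internal to the bound on $|\mathcal{A}_i|$ — it comes from the separate identity $|\mathcal{G}_{k-1}|=|\mathcal{J}_3|$, which for $n>2k$ holds precisely when $k=5$. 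Likewise the parenthetical about $\mathcal{K}_2$ at $k=4$ is misplaced: $\mathcal{K}_2$ is excluded from Theorem~\ref{thm1.5} by the running assumption that every $x$ misses at least three sets (Fact~\ref{fact0}), not by a size comparison restricted to $n\ge 3k-2$.
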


\begin{proof}

 To make the equalities (\ref{eq8}) and (\ref{eq9}) hold, we must  get all the equalities in Lemma \ref{lem2.3}. So $|\mathcal{A}_2|=k-1$. By Lemma \ref{lem2.2}, $\mathcal{A}_2$ is intersecting, so $\mathcal{A}_2$ is a star, say with  center $x$ and leaves $\{x_1, x_2, \dots, x_{k-1}\}$, or a triangle on $\{x, y, z\}$ (only for $k=4$). First consider $k=4$. If $\mathcal{A}_2$ is a triangle, then $\mathcal{G}=\mathcal{G}_2$, a contradiction. Otherwise,
$\mathcal{A}_2$ is a star, this implies that all sets in $\mathcal{G}$ missing $x$ must contain
$\{x_1, x_2, x_3\}$, and the number of such sets is at least 3. Then either $\mathcal{G}= \mathcal{G}_3$ or $\mathcal{G}= \mathcal{J}_i, 3\le i\le k-1$. By the assumption that $\mathcal{G}\not\subseteq \mathcal{G}_3$, the former is impossible, and the latter implies $\mathcal{G}= \mathcal{J}_3$. Hence, the equality in (21) holds only if $\mathcal{G}=\mathcal{J}_3$. For $k\ge5$, $\mathcal{A}_2$ must be a star.
Similarly, in this condition, we have  either $\mathcal{G}=\mathcal{G}_{k-1}$ or $\mathcal{G}= \mathcal{J}_i, 3\le i\le k-1$. In particular, for $k=5$, we can see that the extremal value of $|\mathcal{G}|$ can be achieved by $|\mathcal{G}_4|$ and $|\mathcal{J}_3|$, and for $k>5$, by $|\mathcal{J}_3|$ only.

\end{proof}

We will use some results in \cite{HK2017}.
We say two families $\mathcal{G}$ and $\mathcal{F}$ are \textit{cross-intersecting} if for any $G\in \mathcal{G}$ and $F\in \mathcal{F}$, $G\cap F\neq\emptyset $. We say that a family $\mathcal{F}$ is {\it non-separable} if $\mathcal{F}$ cannot be partitioned into the union of two cross-intersecting non-empty subfamilies.

\begin{proposition}$(\cite{HK2017})$\label{prop3.2}
Let $r\ge 2$. Let $Z$ be a set of size $m\ge2r+1$ and let $A\subseteq Z$ such that  $|A|\in \{r-1, r\}$. Let $\mathcal{B}$ be an $r$-uniform family on $Z$ such that $\mathcal{B}=\{B\subseteq Z: 0<|B\cap A|<|A|\}$. Then $\mathcal{B}$ is non-separable.
\end{proposition}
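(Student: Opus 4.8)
The plan is to argue by contradiction: suppose $\mathcal{B}$ admits a partition $\mathcal{B}=\mathcal{B}_1\cup\mathcal{B}_2$ into two nonempty cross-intersecting subfamilies. The key structural observation is that membership in $\mathcal{B}$ is "$S_m$-symmetric relative to $A$": for any $B\in\mathcal{B}$ and any permutation $\sigma$ of $Z$ that fixes $A$ setwise (hence also fixes $Z\setminus A$ setwise), we still have $\sigma(B)\in\mathcal{B}$, since $0<|\sigma(B)\cap A|=|B\cap A|<|A|$. So the natural idea is to show that the putative partition cannot be compatible with enough of these symmetries, by producing a short "path" of sets $B^{(0)},B^{(1)},\dots,B^{(\ell)}$ in $\mathcal{B}$ with $B^{(0)}\in\mathcal{B}_1$, $B^{(\ell)}\in\mathcal{B}_2$, and each consecutive pair disjoint — forcing $B^{(j)}$ and $B^{(j+1)}$ to lie in different parts, yet also forcing some pair across the path to lie in the same part, a contradiction.

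Concretely, first I would fix the two cases $|A|=r-1$ and $|A|=r$ and set up coordinates: write $A=\{a_1,\dots,a_{|A|}\}$ and pick distinct elements $c_1,c_2,\dots$ of $Z\setminus A$, which is possible because $|Z\setminus A|=m-|A|\ge 2r+1-r=r+1\ge 3$. Every $B\in\mathcal{B}$ has $1\le |B\cap A|\le |A|-1$. The first step is to reduce to understanding how $\mathcal{B}_1$ and $\mathcal{B}_2$ split a single "symmetry orbit": for a fixed intersection size $j$ with $1\le j\le |A|-1$, the sets $B\in\mathcal{B}$ with $|B\cap A|=j$ form one orbit under the group fixing $A$ setwise, and I claim each such orbit lies entirely in $\mathcal{B}_1$ or entirely in $\mathcal{B}_2$. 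To see this, note that within one orbit of fixed size $j$ one can always find two disjoint members (using $|Z\setminus A|\ge r+1$ and $j\le |A|-1\le r-1$ to fit the complementary parts), and more importantly any two members of the orbit are connected by a chain of members of that orbit in which consecutive terms are disjoint; since disjoint sets must lie in opposite parts and "opposite of opposite is same", parity along a chain of even length keeps them together — the cleanest route is to show directly that any two sets of the same size $j$ in the orbit have a common "disjoint mate" in $\mathcal{B}$ (of possibly a different size), which immediately puts them in the same part.

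Then the second step is to connect different orbits. Given orbits for sizes $j$ and $j'$, I would exhibit $B\in\mathcal{B}$ with $|B\cap A|=j$ and $B'\in\mathcal{B}$ with $|B'\cap A|=j'$ that are disjoint; combined with step one this shows that the parts $\mathcal{B}_1,\mathcal{B}_2$ are each a union of orbits, and that the "orbit adjacency graph" (orbits joined when they contain a disjoint pair) is connected — in fact one checks it is connected and contains an odd cycle or equivalently that one orbit is disjoint from itself, which makes a proper $2$-coloring impossible, contradicting cross-intersection. The existence of all these disjoint pairs is exactly where the hypothesis $m\ge 2r+1$ is used: we need room in $Z\setminus A$ to place the parts of $B$ and $B'$ outside $A$ without overlap, and the worst case $|A|=r$, $j=j'=1$ needs $|Z\setminus A|\ge 2(r-1)+1$, i.e. $m\ge 2r+1-r+ \dots$, which I would verify carefully.

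The main obstacle I expect is step one — proving that a whole orbit of fixed intersection size lands in a single part. The subtlety is that two sets of the same size $j$ need not be disjoint (e.g. when $j=|A|-1$ is large and $|Z\setminus A|$ is only moderately bigger than $r$), so one cannot connect them by a single disjointness edge; one must route through an auxiliary set, and for small orbits (say $|A|=2$, $j=1$, so $|B\cap A|=1$) the room outside $A$ must be managed tightly. I would handle this by first disposing of the degenerate small cases $|A|\in\{1,2\}$ (where $\mathcal{B}$ is essentially a full star or a concrete small family and non-separability is a direct check), and then for $|A|\ge 3$ use that $|Z\setminus A|\ge r+1$ comfortably to build the required disjoint mates. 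Once step one is in place, steps two and the final parity contradiction are short.
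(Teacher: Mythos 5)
The paper imports this proposition from \cite{HK2017} and does not reprove it, so I am assessing your proposal on its own merits.

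There is a sign error at the heart of your plan that makes the endgame argument wrong. You write that disjoint members of $\mathcal{B}$ must ``lie in opposite parts'' and hence aim for an odd cycle or parity contradiction in a putative proper $2$-coloring. In fact the opposite holds: if $\mathcal{B}=\mathcal{B}_1\cup\mathcal{B}_2$ is a partition into cross-intersecting parts, then any $F\in\mathcal{B}_1$ and $G\in\mathcal{B}_2$ intersect, so two \emph{disjoint} members of $\mathcal{B}$ must lie in the \emph{same} part. (You in fact use the correct rule elsewhere — a common disjoint mate ``immediately puts them in the same part'' — so the proposal is internally inconsistent.) With the correct rule there is no parity or odd-cycle argument to run; the task is simply to show the disjointness graph on $\mathcal{B}$ (edges between disjoint pairs) is connected, for then $\mathcal{B}$ is monochromatic and one part is empty.

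Even after fixing the sign, two of the specific existence claims you lean on fail when $m=2r+1$. First, two sets in the same orbit need not share a common disjoint mate: with $|A|=r$, $r=5$, $m=11$, write $A=\{a_1,\dots,a_5\}$, $Z\setminus A=\{c_1,\dots,c_6\}$, and take $B_1=\{a_1,c_1,c_2,c_3,c_4\}$ and $B_2=\{a_2,c_3,c_4,c_5,c_6\}$; then $|Z\setminus(B_1\cup B_2)|=3<5$, so no $5$-set avoids both. Second, one cannot always exhibit a disjoint pair straddling two prescribed orbits: a disjoint pair with intersection sizes $j,j'$ needs both $j+j'\le|A|$ and $(r-j)+(r-j')\le m-|A|$; for $|A|=r$ and $m=2r+1$ the latter forces $j+j'\ge r-1$, so e.g.\ orbits $1$ and $1$, or $1$ and $2$, have no disjoint pair across them once $r\ge 5$. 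A workable repair of your approach is to show that any $B,B'\in\mathcal{B}$ differing by a \emph{single element swap} have a common disjoint mate $C\in\mathcal{B}$ (here $|Z\setminus(B\cup B')|=m-(r+1)\ge r$, and one chooses the $r$-subset $C$ with the forced value of $|C\cap A|$; the one delicate subcase is a swap inside $A$ at the top orbit $j=|A|-1$, which one routes through orbit $|A|-2$). Since any $B$ can be morphed into any $B'$ by such swaps while staying in $\mathcal{B}$, connectedness follows, and the (corrected) rule that disjoint sets share a part closes the proof.
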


\begin{lemma}$(\cite{HK2017})$\label{lem3.2}
Let $\mathcal{F}$ be a $k$-uniform intersecting family. If $k\ge 3$ and $S_{xy}(\mathcal{F})\in \{\mathcal{J}_2, \mathcal{G}_{k-1}, \mathcal{G}_2\}$, then  $\mathcal{F}$ is isomorphic to $S_{xy}(\mathcal{F}).$
\end{lemma}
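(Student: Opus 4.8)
The plan is to prove the three cases $S_{xy}(\mathcal{F}) = \mathcal{J}_2$, $S_{xy}(\mathcal{F}) = \mathcal{G}_{k-1}$, $S_{xy}(\mathcal{F}) = \mathcal{G}_2$ in parallel, using the same mechanism. Write $\mathcal{F}_0$ for the subfamily of $\mathcal{F}$ consisting of the sets fixed by $S_{xy}$, and $\mathcal{F}_1$ for the sets that are actually shifted, so $S_{xy}(\mathcal{F}) = \mathcal{F}_0 \cup S_{xy}(\mathcal{F}_1)$ with $\mathcal{F}_0 \cap S_{xy}(\mathcal{F}_1) = \emptyset$. Every set of $S_{xy}(\mathcal{F}_1)$ contains $x$ and not $y$, and for each such set $F' = (F\setminus\{y\})\cup\{x\}$ the original set $F = (F'\setminus\{x\})\cup\{y\}$ lies in $\mathcal{F}$. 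The goal is to show that in fact $\mathcal{F}_1 = \emptyset$ (whence $\mathcal{F} = S_{xy}(\mathcal{F})$ is already of the claimed form, so trivially isomorphic), unless the only possibility is that swapping the roles of $x$ and $y$ in $S_{xy}(\mathcal{F})$ gives $\mathcal{F}$, which is the isomorphism we want.

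First I would record the structural feature common to $\mathcal{J}_2$, $\mathcal{G}_{k-1}$ and $\mathcal{G}_2$ that makes this work: in each of these families there is a distinguished small set $A$ (the core, kernel, or $\{x\}\cup E$, depending on the case) of size $k-1$ or $k$, living in a ground set $Z$ of size $\ge 2k+1$, such that the "non-star" part of the family — the sets not forced through the center — is exactly $\{B : 0 < |B\cap A| < |A|\}$ together with the star through the center restricted to meet $A$. The key input is Proposition \ref{prop3.2}: the family $\mathcal{B} = \{B\subseteq Z : 0 < |B\cap A| < |A|\}$ is non-separable. I would then argue that if $\mathcal{F}_1 \ne \emptyset$, the pair $(\mathcal{F}_0', \mathcal{F}_1')$ obtained by looking at how $x$ and $y$ distribute over the sets of $S_{xy}(\mathcal{F})$ would induce a partition of a suitable copy of $\mathcal{B}$ into two nonempty cross-intersecting pieces (the sets of $S_{xy}(\mathcal{F})$ that "came from" a $y$-set and those that did not), contradicting non-separability — unless $y$ occupies the exact same structural role that $x$ does, in which case relabeling $x\leftrightarrow y$ is an isomorphism carrying $S_{xy}(\mathcal{F})$ to $\mathcal{F}$.

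Concretely, the steps are: (1) fix the distinguished set $A$ and center (or note the case $\mathcal{J}_2$ has center $x_0$, kernel $E$, pages $J$; handle the page set separately since it is small); (2) observe that any shifted set $F' \ni x$, $F' \not\ni y$ has preimage $F \ni y$, $F \not\ni x$ still intersecting every member of $\mathcal{F}$, hence every member of $S_{xy}(\mathcal{F})$ that avoids $x$ — and the members of $S_{xy}(\mathcal{F})$ avoiding $x$ are precisely the "$A$-part" sets not through the center; (3) feed this cross-intersecting relation into Proposition \ref{prop3.2}: the sets meeting $A$ in a proper nonempty subset cannot be split into those meeting $F\setminus\{y\}$ and those not, unless one side is empty; (4) conclude that either $\mathcal{F}_1 = \emptyset$, or $y$ lies in the support of $A$ and plays a symmetric role, so that $\mathcal{F} \cong S_{xy}(\mathcal{F})$ via the transposition $(x\,y)$. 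I expect the main obstacle to be the bookkeeping in case $\mathcal{J}_2$, where the "set of pages" $J$ breaks the clean symmetry and one must separately verify that a shifted set cannot land in the page-part without creating a non-intersecting pair with its preimage; a short direct check using $|Z| \ge 2k+1$ and $|J| = 2$ should close it. The cases $\mathcal{G}_{k-1}$ and $\mathcal{G}_2$ are cleaner because the core is the only distinguished object and Proposition \ref{prop3.2} applies to it verbatim with $r = k$ and $|A| \in \{k-1, k\}$.
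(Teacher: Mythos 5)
The paper does not prove Lemma~\ref{lem3.2}; it cites it from \cite{HK2017}. What the paper does prove, with the identical mechanism, is the neighbouring Lemma~\ref{lem3.3} (the case $S_{xy}(\mathcal{F})=\mathcal{J}_3$), and your proposal is recognisably the same machine: split off the sets that could have been shifted, observe that the shifted and the unshifted subfamilies project to cross-intersecting families $\mathcal{C}'$ and $\mathcal{D}'$ on $[n]\setminus\{x,y\}$, show the projection $\mathcal{B}'$ of the whole ``shiftable'' zone is non-separable, and conclude that one of $\mathcal{C}'$, $\mathcal{D}'$ is empty (giving $\mathcal{F}=S_{xy}(\mathcal{F})$ or $\mathcal{F}\cong S_{xy}(\mathcal{F})$ via the transposition $(x\,y)$). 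That is exactly the right plan, and your identification of the target ($\mathcal{F}_1 = \emptyset$ or $x\leftrightarrow y$ is the isomorphism) is correct.

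Where you are too optimistic is the claim that ``Proposition~\ref{prop3.2} applies verbatim'' to $\mathcal{G}_{k-1}$ and $\mathcal{G}_2$. Two concrete problems. First, the parameters: $\mathcal{B}'$ lives in $Z=[n]\setminus\{x,y\}$ and consists of $(k-1)$-sets, so $r=k-1$, not $r=k$, and the relevant set is the core minus $x$ (and possibly minus $y$), which can have size $k-2$ or smaller, not $k-1$ or $k$. Second, and more to the point, whether $\mathcal{B}'$ even has the form $\{B: 0<|B\cap A|<|A|\}$ depends on where $x$ and $y$ sit relative to the centre and core, exactly as in the paper's proof of Lemma~\ref{lem3.3}, which splits into Cases (i)--(iii) and only invokes Proposition~\ref{prop3.2} in Case (i); the other two cases write $\mathcal{B}'$ as an explicit union of a few distinguished sets plus a residual block and give a short ad hoc non-separability argument. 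The same casework is unavoidable for $\mathcal{G}_{k-1}$: for instance, if $x$ lies in the core $E$ and $y\notin E\cup\{\text{centre}\}$, then $\mathcal{B}'$ is a union $\{(E\setminus\{x\})\cup\{z\}\}_z \cup\{G\setminus\{x\}: \text{centre}\in G,\ G\cap E=\{x\},\ y\notin G\}$, which is not a Proposition~\ref{prop3.2} family and needs the paper's Case~(ii)/(iii)-style disjointness argument. Worse, for $\mathcal{G}_2$ with $k\ge 5$ the natural $A$ (core minus $x$) has size $2$ while $r=k-1\ge 4$, so $|A|\notin\{r-1,r\}$ and Proposition~\ref{prop3.2} as stated does not cover that regime at all; a separate (easy but necessary) argument for non-separability of $\{B: |B\cap A|=1\}$ when $|A|=2$ is required. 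None of this breaks the plan, but ``the bookkeeping'' you deferred is precisely where the proof lives, and it is not confined to the $\mathcal{J}_2$ case.
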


Combining with Theorem \ref{thm2.9} and Lemma \ref{lem3.2}, the uniqueness part  of Theorem \ref{thm1.5} will be completed by showing the following lemma.

\begin{lemma}\label{lem3.3}
Let $\mathcal{F}$ be a $k$-uniform intersecting family. If $k\ge 4$ and $S_{xy}(\mathcal{F})=\mathcal{J}_3$, then  $\mathcal{F}$ is isomorphic to $\mathcal{J}_3$.
\end{lemma}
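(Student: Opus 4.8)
The plan is to mimic the structure of Lemma \ref{lem3.2} from \cite{HK2017}, adapting the argument from $\mathcal{J}_2$, $\mathcal{G}_{k-1}$, $\mathcal{G}_2$ to the family $\mathcal{J}_3$. Write $\mathcal{J}_3$ with center $x$, kernel $E$ (a $(k-1)$-set), and set of pages $J$ (a $3$-set) disjoint from $E\cup\{x\}$, and suppose $S_{xy}(\mathcal{F})=\mathcal{J}_3$ while $\mathcal{F}\ne\mathcal{J}_3$. Since shifting only alters sets by swapping $y$ for $x$, the family $\mathcal{F}$ is obtained from $\mathcal{J}_3$ by taking some subfamily $\mathcal{B}\subseteq\mathcal{J}_3$ of sets containing $x$ but not $y$ and replacing each $B\in\mathcal{B}$ by $(B\setminus\{x\})\cup\{y\}$; moreover for each such $B$ the set $(B\setminus\{x\})\cup\{y\}$ must not already lie in $\mathcal{F}$, and crucially $\mathcal{F}$ must still be intersecting. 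First I would record the trivial cases: if $y\notin E\cup J$, then $\mathcal{F}$ is literally isomorphic to $\mathcal{J}_3$ via the transposition $(x\,y)$ unless $\mathcal{B}=\emptyset$ (in which case $\mathcal{F}=\mathcal{J}_3$), so we may assume $y\in E\cup J$, or handle the $y\notin E\cup J$ case by checking the swap is an isomorphism directly.

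The heart of the argument is a non-separability obstruction in the spirit of Proposition \ref{prop3.2}. The point is that $\mathcal{B}$ and $\mathcal{J}_3\setminus\mathcal{B}$ cannot be made cross-intersecting after the swap unless $\mathcal{B}$ is essentially all of the sets through $x$ missing $y$ — and then one checks the resulting $\mathcal{F}$ is again isomorphic to $\mathcal{J}_3$ (with $y$ playing a structural role). Concretely: for $\mathcal{F}$ to be intersecting, every swapped set $(B\setminus\{x\})\cup\{y\}$ with $B\in\mathcal{B}$ must meet every unswapped set $C\in\mathcal{J}_3\setminus\mathcal{B}$; since $x\notin (B\setminus\{x\})\cup\{y\}$, the intersection must be witnessed elsewhere. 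I would split by the location of $y$: if $y\in E$, then sets in $\mathcal{J}_3$ containing $E$ automatically contain $y$, so those are unaffected, and the sets that matter are the "center-only" sets $\{x\}\cup(\text{something meeting }E)$ and the page sets; the cross-intersection constraints between swapped and unswapped center-sets force, via a counting/covering argument (there are more than $2k+1$ available coordinates in the ambient ground set once $n\ge 2k+1$, so Proposition \ref{prop3.2}-type rigidity applies), either $\mathcal{B}$ empty or $\mathcal{B}$ so large that $\mathcal{F}\cong\mathcal{J}_3$. The case $y\in J$ is analogous but one must also track the pages: swapping $x$ out of a page-type set $J\cup\{x\}\cup(\cdots)$ could destroy intersection with a kernel-type set, and one argues this can only be globally consistent in the isomorphic-copy situation.

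The main obstacle I anticipate is the bookkeeping in the case $y\in E$ versus $y\in J$ and, within each, showing that a "partial" swap $\mathcal{B}$ (neither empty nor everything) always produces a non-intersecting family. This is exactly where a non-separability statement is needed: one wants that the relevant slice of $\mathcal{J}_3$ — roughly, the $r$-uniform trace on the coordinates not pinned down by $x$, $E$, $J$ — is non-separable, so that $\mathcal{B}$ and its complement cannot be cross-intersecting unless one of them is empty. I would aim to invoke Proposition \ref{prop3.2} with $r$ and $m$ chosen to fit this trace (using $n\ge 2k+1$ to guarantee $m\ge 2r+1$), reducing the genuinely new work to verifying that the hypotheses of Proposition \ref{prop3.2} are met and that the "all-swapped" boundary case yields a genuine isomorphism. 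The case $k=4$ versus $k\ge 5$ should not cause a real split here since $\mathcal{J}_3$ makes sense uniformly for $k\ge 4$; any edge behavior would show up only in the dimension count for the non-separability step, which I would check holds for all $k\ge 4$.
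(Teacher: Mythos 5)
Your proposal gets the right overall mechanism — the paper does reconstruct $\mathcal{F}$ from $\mathcal{J}_3$ by a partial swap, defines the family of candidate sets $\mathcal{B}_x=\{G\in\mathcal{J}_3: x\in G,\ y\notin G,\ (G\setminus\{x\})\cup\{y\}\notin\mathcal{J}_3\}$, partitions it into the ``kept'' part $\mathcal{C}_x$ and the ``swapped'' part $\mathcal{D}_x$, observes that the traces $\mathcal{C}'$, $\mathcal{D}'$ on $[n]\setminus\{x,y\}$ are forced to be cross-intersecting, and concludes by showing $\mathcal{B}'=\mathcal{C}'\sqcup\mathcal{D}'$ is non-separable using Proposition~\ref{prop3.2}. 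But your setup fixes $x$ to be the center of $\mathcal{J}_3$ from the start, and this assumption is not available. Nothing about $S_{xy}(\mathcal{F})=\mathcal{J}_3$ forces $x$ to be the center: $x$ may lie in the kernel $E$ or in the page set $J$ (the trivial cases are only $x,y$ in the same block, or $x$ in a later block than $y$). The paper has to treat three separate non-trivial cases ($x=x_0$; $x\in E$, $y\in J\cup T_4$; $x\in J$, $y\in T_4$), and the arguments for the last two are structurally different from the center case: one argues directly that $E_1,E_2,E_3$ (or $E_2,E_3$, or the singleton $\{E\}$) sit in a single part of any cross-intersecting partition and drag everything else along, rather than invoking Proposition~\ref{prop3.2}. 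Your proposal never discusses these cases, so it does not actually establish the lemma.

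A second, smaller gap: even in the center case, $\mathcal{B}'$ is \emph{not} the middle-layer family $\{B: 0<|B\cap A|<|A|\}$ to which Proposition~\ref{prop3.2} applies directly (except when $y\in J$). For $y\in E$ or $y\in T_4$ one must partition $\mathcal{B}'$ into $\mathcal{P}_1\sqcup\mathcal{P}_2\sqcup\mathcal{P}_3$ (middle layer, $A$-avoiding, $A$-containing) and verify by hand that $\mathcal{P}_2,\mathcal{P}_3$ are chained to $\mathcal{P}_1$; ``Proposition-type rigidity applies'' glosses over real work. Also, stating the constraint as ``every swapped set must meet every unswapped set $C\in\mathcal{J}_3\setminus\mathcal{B}$'' is too coarse: sets in $\mathcal{J}_3$ containing $y$ trivially meet swapped sets, and sets containing neither $x$ nor $y$ still meet them; the only nontrivial constraint is $\mathcal{C}_x$ versus the swapped copy of $\mathcal{D}_x$, which is precisely what yields cross-intersection of $\mathcal{C}'$ and $\mathcal{D}'$. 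Finally, the claim that $y\notin E\cup J$ can be ``handled directly by the transposition'' presumes the swap is total, which is exactly what the non-separability argument needs to prove; that case is not free.
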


\begin{proof}
Assume that $S_{xy}(\mathcal{F})=\mathcal{J}_3$ with center $x_0$, kernel $E$ and the set of pages $\{x_1, x_2, x_3\}$. That is
\[
\mathcal{J}_3=\{G:\{x_0, x_1, x_2, x_3\}\subseteq G\}\cup\{G:x_0\in G, G\cap E\ne \emptyset\}\cup \{E\cup\{x_1\}, E\cup\{x_2\}, E\cup\{x_3\}\}.
\]
Define
\begin{align*}
\mathcal{B}_x&:=\{G\in \mathcal{J}_3: x\in G, y\not\in G, (G\setminus{x})\cup{y}\not\in \mathcal{J}_3\}, \\
\mathcal{C}_x&:=\{G\in \mathcal{B}_x: G\in \mathcal{F}\}, \\
\mathcal{D}_x&:=\{G\in \mathcal{B}_x: G\not\in \mathcal{F}\}, \\
\mathcal{B}' &:=\{G\setminus \{x\}: G\in \mathcal{B}_x\},\\
\mathcal{C}'&:=\{G\setminus \{x\}: G\in \mathcal{C}_x\},\\
 \mathcal{D}'&:=\{G\setminus \{x\}: G\in \mathcal{D}_x\}.
\end{align*}
Then $\mathcal{B}_x=\mathcal{C}_x \sqcup \mathcal{D}_x$ and $\mathcal{B}'=\mathcal{C}' \sqcup \mathcal{D}'$.
The definition of $\mathcal{D}_x$ implies that for any $G\in \mathcal{D}_x$, $G\setminus \{x\} \cup \{y\} \in \mathcal{F}$, and the definition of $\mathcal{C}_x$ implies that for any $G\in \mathcal{C}_x$, $G\setminus \{x\} \cup \{y\} \not\in \mathcal{F}$. Clearly, only the sets in $\mathcal{D}_x$ are in  $S_{xy}(\mathcal{F})\setminus \mathcal{F} $. If $\mathcal{D}_x=\emptyset$, then $S_{xy}(\mathcal{F})=\mathcal{F}=\mathcal{J}_3$, and if $\mathcal{C}_x=\emptyset$, then $\mathcal{F}$ is still $\mathcal{J}_3$ with center $y$.  On the other hand, notice that $\mathcal{C}_x$ and $\{ G\setminus \{x\}\cup \{y\}: G\in\mathcal{D}_x\}$ are cross intersecting, so $\mathcal{C}'$ and $\mathcal{D}'$ are cross intersecting. We are going to prove that $\mathcal{B'}$ is non-separable, this means that $\mathcal{C}'=\emptyset$ or $\mathcal{D}'=\emptyset$, and hence $\mathcal{C}_x=\emptyset$ or $\mathcal{D}_x=\emptyset$, we can conclude the proof. So what remains is to show the following claim.

\begin{clm}
$\mathcal{B'}$ is non-separable.
\end{clm}
\begin{proof}
We say the shift $S_{xy}: \mathcal{F} \to  \mathcal{J}_3$ is trivial if $\mathcal{B}_x=\emptyset$. Let $Z:=[n]\setminus\{x, y\}$. If $r=k-1$, then $|Z|\ge 2k+1-2=2r+1$.

Let $T_1:=\{x_0\}, \,T_2:=E, \,T_3:=\{x_1, x_2, x_3\}, \,T_4:=[n]\setminus(T_1\cup T_2\cup T_3)$.

Since for $x, y \in T_i$ or for $x\in T_i, y\in T_j, i>j$, the shift is trivial, we only need to consider the following three cases.

Case (i): $x=x_0$ and $y\in T_2 \cup T_3 \cup T_4$.

If $y\in T_3$, let $A=E$, then $\mathcal{B'}=\{B\subseteq Z: 0<|B\cap A|<|A|\}$. By Proposition \ref{prop3.2}, $\mathcal{B'}$ is non-separable. If $y\in T_2\cup T_4$, let $A:=E\setminus \{y\}$, then $|A|\in \{r-1, r\}$. Assume that $\mathcal{B'}$ has a partition $\mathcal{B'}_1\cup \mathcal{B'}_2$ such that $\mathcal{B'}_1$ and $\mathcal{B'}_2$ are cross-intersecting. We now partition $\mathcal{B'}$ into three parts $\mathcal{P}_1\sqcup \mathcal{P}_2\sqcup \mathcal{P}_3$, where
\begin{gather*}
\mathcal{P}_1:=\{B\subseteq Z: 0<|B\cap A|<|A|\},\\
\mathcal{P}_2:=\{B\in \mathcal{B'}: B\cap A=\emptyset\}=\{T_3\cup F: F\subseteq T_4\setminus \{y\}, |F|=k-4\},
\end{gather*}
and
\begin{eqnarray*}
\mathcal{P}_3:=\{B\in \mathcal{B'}: A\subseteq B\}=
\begin{cases}
\{A\cup \{z\}: z\in T_4\}, & y\in T_2;\\
\{A\}, & y\in T_4.
\end{cases}
\end{eqnarray*}
Obviously, $\mathcal{P}_1\ne \emptyset$. By Proposition \ref{prop3.2}, $\mathcal{P}_1$ is non-separable. For any $P\in \mathcal{P}_2$, and any $a\in A$, we have $|Z\setminus\{a\}|\ge 2r $, then in $\mathcal{P}_1$ we can always find $P'\subseteq Z\setminus (\{a\}\cup P)$  such that $0<|P'\cap A|<|A|$ and $P\cap P'=\emptyset$. This implies that $P$ and $P'$ must be in the same $\mathcal{B'}_i$ ($i=1$ or $2$)(recall that we assumed that $\mathcal{B'}$ has a partition $\mathcal{B'}_1\cup \mathcal{B'}_2$ such that $\mathcal{B'}_1$ and $\mathcal{B'}_2$ are cross-intersecting),  hence $\mathcal{P}_1$ and $\mathcal{P}_2$ are in the same $\mathcal{B'}_i$.
For any $P\in \mathcal{P}_3$, we have $|P\cap T_4|\le 1$. Since $|T_4|\ge k-2$, there is a $(k-4)$-set $F\subseteq T_4\setminus \{y\}$, such that $P\cap F=\emptyset$. Note that $P':=F\cup T_3\in \mathcal{P}_2$ and $P'\cap P=\emptyset$, so $\mathcal{P}_2$ and $\mathcal{P}_3$ are in the same $\mathcal{B'}_i$. Hence $\mathcal{B'}=\mathcal{B'}_1$ or $\mathcal{B'}_2$, as desired.

Case (ii): $x\in T_2$ and $y\in T_3 \cup T_4$.

 Let $E_i:=(E\cup \{x_i\})\setminus \{x\}, i=1, 2, 3$.

 If $y\in T_4$, then
\[
\mathcal{B'}=\{E_1, E_2, E_3\}\cup \left\{G\in{[n]\setminus\{x\} \choose k-1}: x_0\in G, G\cap E=\emptyset, |G\cap T_3|\le 2, y\not\in G\right\}.
\]
Since $|T_4\setminus \{y\}|\ge k-3$, there is $P\in \mathcal{B'}\setminus\{E_1, E_2\}$, such that
 $P\cap E_1=P\cap E_2=\emptyset$. Hence, $E_1$ and $E_2$ belong to the same part $\mathcal{B'}_i$. Similarly, $E_1$ and $E_3$ belong to the same part. Thus $E_1, E_2$ and $E_3$ are in the same $\mathcal{B'}_i$. Moreover, for any $P'\in  \mathcal{B'}\setminus\{E_1, E_2, E_3\}$, because $|P'\cap \{x_1, x_2, x_3\}|\le 2$, we have $P'\cap E_1=\emptyset$, or $P'\cap E_2=\emptyset$ or $P'\cap E_3=\emptyset$. Hence, $\mathcal{B'}$ is non-separable, as desired.

If $y\in T_3$, w.l.o.g., let $y=x_1$. Then
\[
\mathcal{B'}=\{E_2, E_3\}\cup \left\{G\in{[n]\setminus\{x\} \choose k-1}: x_0\in G, G\cap E=\emptyset, |G\cap T_3|\le 1, y\not\in G\right\}.
\]
Since $|T_4|\ge k-2$, there exists $P\in \mathcal{B'}\setminus \{E_2, E_3\}$ such that $P\cap T_3=\emptyset$, then $P\cap E_2=\emptyset$, and $P\cap E_3=\emptyset$, this implies that $E_2$ and $E_3$ are in the same $\mathcal{B'}_i$. Because $|G\cap T_3|\le 1$ and $G\cap E=\emptyset$, it's not hard to see that each $P\in \mathcal{B'}\setminus \{E_2, E_3\}$ is disjoint from  one of $E_2$ and $E_3$. Hence $\mathcal{B'}$ is non-separable.

Case (iii): $x\in T_3$ and $y\in T_4$. w.l.o.g., let $x=x_1$.

Under this condition,
\[
\mathcal{B'}=\{E\}\cup \left\{G\in{[n]\setminus\{x\} \choose k-1}: \{x_0, x_2, x_3\}\subseteq G, G\cap E=\emptyset, y\not\in G\right\}.
\]
Since $E$ is disjoint from every other set in $\mathcal{B}'\setminus \{E\}$,
 $\mathcal{B'}$ is non-separable.
\end{proof}
The proof of Lemma \ref{lem3.3} is complete.
\end{proof}

\section{Proofs of Lemma \ref{lem2.1} and Lemma \ref{lem2.2}}\label{sec3}

\subsection{Proof of Lemma \ref{lem2.1}}

We first show the following preliminary results. For a family $\mathcal{F}\subseteq 2^{[n]}$ and $x_1, x_2, x_3\in [n]$, let $d_{\{x_1, x_2\}}$ be the number of sets containing $\{x_1, x_2\}$ in $\mathcal{F}$, and $d_{\{x_1, x_2, x_3\}}$ be the number of sets containing $\{x_1, x_2, x_3\}$ in $\mathcal{F}$.

\begin{clm} \label{clm 1}
  Let $\mathcal{F}\subseteq\mathcal{G}_2$ be a 4-uniform family with core $A$ satisfying $d_{\{x_1, x_2\}} >2n-7$. Then $\{x_1, x_2\} \subseteq A$.
\end{clm}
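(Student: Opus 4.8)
The plan is to argue by contradiction: suppose $\{x_1,x_2\}\not\subseteq A$, and show that then $d_{\{x_1,x_2\}}$ cannot exceed $2n-7$. Recall $\mathcal{F}\subseteq\mathcal{G}_2$ means every $G\in\mathcal{F}$ either contains the core $A$ (a $3$-set, since $k=4$ and the core of $\mathcal{G}_2$ is $\{x\}\cup E$ with $|E|=2$), or contains the center $x$ and meets $A$. So to bound $d_{\{x_1,x_2\}}$ I would count the $4$-sets $G$ with $\{x_1,x_2\}\subseteq G$ that can possibly lie in $\mathcal{G}_2$, splitting according to which of the two defining conditions $G$ satisfies.

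First I would handle the case $|\{x_1,x_2\}\cap A|\le 1$. If $G\supseteq A$ and $G\supseteq\{x_1,x_2\}$, then $G$ must contain $A\cup\{x_1,x_2\}$, which already has at least $4$ elements (in fact $\ge 4$, with equality only when exactly one of $x_1,x_2$ lies in $A$); so there are $O(1)$ — at most $n$ or so, but really at most a constant once one is careful — such sets, and in the subcase $|\{x_1,x_2\}\cap A|=0$ there are none at all since $|A\cup\{x_1,x_2\}|=5>4$. For the other type, $G\ni x$ and $G\cap A\ne\emptyset$ and $\{x_1,x_2\}\subseteq G$: such a $G$ is determined by its remaining element(s), and since $|G|=4$ while $G$ must contain $x$, $x_1$, $x_2$ and a point of $A$, the number of choices is at most $|A|=3$ plus the freedom of the one leftover vertex, giving at most $3(n-\text{const})$ roughly — but here I must be more careful, because $x_1$ or $x_2$ might itself equal $x$ or lie in $A$, which is exactly where the various subcases diverge. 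The point is that in every configuration with $\{x_1,x_2\}\not\subseteq A$ the count comes out to at most something like $2(n-\text{const})+O(1)\le 2n-7$, contradicting the hypothesis $d_{\{x_1,x_2\}}>2n-7$.

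Concretely I would organize the subcases as: (a) $\{x_1,x_2\}\cap(A\cup\{x\})=\emptyset$; (b) exactly one of $x_1,x_2$ is in $A$; (c) exactly one of $x_1,x_2$ equals $x$; (d) one in $A$, one equal to $x$ — and in each, write down the explicit families of $4$-sets in $\mathcal{G}_2$ through $\{x_1,x_2\}$ and add up their sizes, checking the total is $\le 2n-7$. The arithmetic is routine; the numerical constant $2n-7$ is presumably tuned so that the worst admissible subcase (one of $x_1,x_2$ in $A$, the other outside $A\cup\{x\}$: sets are $A\cup\{x_i\}$-type plus $x$-containing sets meeting $A$ through the $A$-element of $\{x_1,x_2\}$, contributing two "fans" of size about $n-|Y|$ each) lands just at or below the threshold, whereas $\{x_1,x_2\}\subseteq A$ genuinely allows $d_{\{x_1,x_2\}}$ to be of order $n$ with a larger coefficient.

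The main obstacle I anticipate is bookkeeping rather than ideas: one has to be scrupulous about overlaps between the two parts of the definition of $\mathcal{G}_2$ (a set can contain $A$ \emph{and} contain $x$ and meet $A$ simultaneously, so naive addition double-counts) and about the degenerate possibilities that $x_1$ or $x_2$ coincides with the center $x$ or lies in the core $A$. Getting the constant exactly right ($2n-7$ and not $2n-6$ or $2n-8$) requires tracking these $O(1)$ terms precisely, so I would set up each subcase's family explicitly and compute $|\mathcal{F}\cap\{G:\{x_1,x_2\}\subseteq G\}|$ by inclusion–exclusion in that subcase, then verify the inequality. Once all four subcases are ruled out, $\{x_1,x_2\}\subseteq A$ is forced, completing the proof.
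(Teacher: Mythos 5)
Your overall strategy is the same as the paper's---argue contrapositively that $\{x_1,x_2\}\not\subseteq A$ forces $d_{\{x_1,x_2\}}\le 2n-7$---but your proposal has two substantive problems. First, you misstate the definition of $\mathcal{G}_2$: the two defining clauses are ``$E\subseteq G$'' and ``$x\in G$ and $G\cap E\ne\emptyset$,'' where $E=A\setminus\{x\}$ is the $2$-element set, not $A$ itself. Since $x\in A$, your version ``$x\in G$ and $G\cap A\ne\emptyset$'' is vacuously equivalent to ``$x\in G$'' and describes a strictly larger family, so the counts that would follow from it cannot be trusted. Second, all the bookkeeping you anticipate---double-counting across the two clauses, separating ``$x_i$ equals the center'' from ``$x_i$ lies in $E$''---evaporates once you observe that for $k=4$,
\[
\mathcal{G}_2=\Bigl\{G\in\binom{[n]}{4}:|G\cap A|\ge 2\Bigr\},
\]
which unifies both clauses into a single condition on $|G\cap A|$. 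With that, the only relevant parameter is $|\{x_1,x_2\}\cap A|\in\{0,1\}$, and the deferred arithmetic is a one-liner: if $|\{x_1,x_2\}\cap A|=0$, a $4$-set through $\{x_1,x_2\}$ that lies in $\mathcal{G}_2$ must use two of the three points of $A$ as its remaining two elements, so $d_{\{x_1,x_2\}}\le\binom{3}{2}=3$; if $|\{x_1,x_2\}\cap A|=1$, the set must pick one more point of $A$ (two choices) and one free element, so by inclusion--exclusion $d_{\{x_1,x_2\}}\le 2(n-3)-1=2n-7$. Your subcases (b), (c), (d) all collapse into the case $|\{x_1,x_2\}\cap A|=1$ since $x\in A$, and (d) cannot even occur because it would place both $x_1$ and $x_2$ inside $A$, contradicting the assumption.
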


\begin{proof}
If $\{x_1, x_2\} \subseteq [n]\setminus A$, then a set in $\mathcal{F}$ containing $\{x_1, x_2\}$ must have two elements from $A$, so $d_{(x_1, x_2)}\leq 3$, a contraction. If $|\{x_1, x_2\} \cap A| = 1$, then  a set in $\mathcal{F}$ containing $\{x_1, x_2\}$ must have at least one element from $A$, so $d_{(x_1, x_2)}\leq 2n-7$, a contraction again. So $\{x_1, x_2\} \subseteq A$, as desired.
\end{proof}

\begin{clm}\label{clm 2}
 Let $\mathcal{F}\subseteq\mathcal{G}_3$ be a 4-uniform family with center $x$ and core $E$ and let $B=\{x\}\cup E$. \\
(i) If $d_{\{x_1, x_2\}}\ge 3n-12$, then $x\in \{x_1, x_2\}$.\\
(ii) If $d_{\{x_1, x_2\}}> 3n-12$, then $\{x_1, x_2\} \subseteq B$ and $x\in \{x_1, x_2\}$.
\end{clm}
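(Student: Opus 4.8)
The argument is a direct counting exercise that parallels Claim \ref{clm 1}, so the plan is to split into cases according to the intersection pattern of $\{x_1,x_2\}$ with the structure $B=\{x\}\cup E$ and, in each case, bound $d_{\{x_1,x_2\}}$ from above by counting the $4$-sets of $\mathcal{G}_3$ that can contain $\{x_1,x_2\}$. Recall that every $G\in\mathcal{G}_3$ either contains the core $E$ (three prescribed points, leaving one free point, giving at most $n-3$ such sets through any fixed pair inside $E$), or contains the center $x$ and meets $E$; in the latter family a set is determined by $x$, at least one point of $E$, and then two further free points.

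\emph{Proof of (i).} Suppose $x\notin\{x_1,x_2\}$. If $\{x_1,x_2\}\subseteq E$, then a set of $\mathcal{G}_3$ containing $\{x_1,x_2\}$ either contains $E$ (and one more point: at most $n-3$ choices) or contains $x$ together with $\{x_1,x_2\}$ and one more point (at most $n-3$ choices); hence $d_{\{x_1,x_2\}}\le 2(n-3)=2n-6<3n-12$ once $n$ is large enough, and in the relevant range $n\ge 2k+1=9$ this is still below $3n-12$. If $|\{x_1,x_2\}\cap E|=1$, say $x_1\in E$, $x_2\notin B$, then a set of $\mathcal{G}_3$ through $\{x_1,x_2\}$ must contain $x$ (it cannot contain all of $E$ since $x_2\notin E$ and there is no room), so it is of the form $\{x,x_1,x_2,\ast\}$, giving at most $n-3$ sets, so $d_{\{x_1,x_2\}}\le n-3<3n-12$. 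If $\{x_1,x_2\}\cap B=\emptyset$, then a set of $\mathcal{G}_3$ through $\{x_1,x_2\}$ can contain neither $E$ nor $x$ together with a point of $E$ with enough room — more precisely it must contain $x$ and one point of $E$, leaving only one free slot which is already used by one of $x_1,x_2$, so it is $\{x,e,x_1,x_2\}$ for $e\in E$, at most $3$ choices, and in particular $d_{\{x_1,x_2\}}\le 3<3n-12$. In every case we contradict $d_{\{x_1,x_2\}}\ge 3n-12$, so $x\in\{x_1,x_2\}$.

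\emph{Proof of (ii).} Assume $d_{\{x_1,x_2\}}>3n-12$; by (i) we may take $x=x_1$. It remains to rule out $x_2\notin B$, i.e. $x_2\notin E$. If $x_2\notin E$, a set of $\mathcal{G}_3$ containing $\{x,x_2\}$ cannot contain $E$ (no room for all three points of $E$ together with $x$ and $x_2$), so it lies in the ``center part'': it is of the form $\{x,x_2,e,\ast\}$ with $e\in E$ and $\ast$ free, or it already uses a second point of $E$ among its four slots. Counting: pick $e\in E$ (three choices) and then the fourth point from $[n]\setminus\{x,x_2,e\}$; this overcounts sets meeting $E$ in two points, but gives at most $3(n-3)=3n-9$; subtracting the overcount of sets of the form $\{x,x_2,e,e'\}$ with $e,e'\in E$ (there are $\binom{3}{2}=3$ of these, each counted twice), we get $d_{\{x,x_2\}}\le 3(n-3)-3=3n-12$, contradicting the strict inequality. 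Hence $x_2\in E\subseteq B$, so $\{x_1,x_2\}\subseteq B$ and $x\in\{x_1,x_2\}$, as desired.

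\emph{Main obstacle.} The only delicate point is getting the constants exactly right in the inclusion–exclusion count for the ``center part'' of $\mathcal{G}_3$ in part (ii): one must be careful to subtract precisely the sets that meet $E$ in two points so as to land on $3n-12$ rather than an off-by-a-constant bound, since the claim hinges on the \emph{strict} inequality. I would double-check this count by listing the structure of a generic $G\in\mathcal{G}_3$ through $\{x,x_2\}$ explicitly rather than relying on a formula.
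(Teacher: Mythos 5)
Your approach is essentially the same as the paper's: split on how $\{x_1,x_2\}$ meets $B=\{x\}\cup E$ and bound $d_{\{x_1,x_2\}}$ by direct counting in each case, with the delicate count in (ii) landing exactly on $3n-12$, just as in the paper (which computes it as $\binom{n-2}{2}-\binom{n-5}{2}$). One small slip in part (i): in the subcase $x_1\in E$, $x_2\notin B$, you assert that a member of $\mathcal{G}_3$ through $\{x_1,x_2\}$ must contain $x$ because ``it cannot contain all of $E$.'' That is not so: $E\cup\{x_2\}$ is a legitimate $4$-set of $\mathcal{G}_3$ containing $\{x_1,x_2\}$ but not $x$ (since $x\notin E$ and $x_1\in E$ leaves exactly one free slot for $x_2$). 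The correct bound in that subcase is therefore $d_{\{x_1,x_2\}}\le (n-3)+1=n-2$, which is still well below $3n-12$ for $n\ge 9$, so the conclusion is unaffected; the paper counts this extra set explicitly.
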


\begin{proof}
For (i), assume that $x\not \in \{x_1, x_2\}$. If $\{x_1, x_2\}\cap B= \emptyset$, then the sets containing $\{x_1, x_2\}$ must contain the center $x$ and another vertex from core $E$, so $d_{(x_1, x_2)}\le 3<3n-12$, a contradiction. So $\{x_1, x_2\}\subseteq E$ or $|\{x_1, x_2\} \cap E|=1$. If the former holds, then the sets containing $\{x_1, x_2\}$ must contain the center $x$ or contain the core $E$, so $d_{(x_1, x_2)}\le (n-3)+(n-4)=2n-7<3n-12$, a contradiction. If the latter holds, w.l.o.g., let $\{x_1, x_2\} \cap E=\{x_1\}$, then the sets containing $\{x_1, x_2\}$ must contain the center $x$ or just the set $E\cup \{x_2\} $, so $d_{(x_1, x_2)}\le (n-3)+1<3n-12$, also a contradiction. Hence, $x\in \{x_1, x_2\}$, as desired.

 For (ii), we have shown that $x\in \{x_1, x_2\}$ by (i), w.l.o.g, let $x_1=x$ be the center. If $x_2\not \in E$, then the sets containing $\{x_1, x_2\}$ must intersect with $E$, so $d_{(x_1, x_2)}\le {n-2 \choose 2}-{n-5 \choose 2}=3n-12$, a contradiction to that $d_{\{x_1, x_2\}}> 3n-12$, so $x_2 \in E$,
that is $\{x_1, x_2\} \subseteq B$, as desired.
\end{proof}

\begin{clm}\label{clm00}
 Fix $n>6$. Let $\mathcal{F}\subseteq\mathcal{G}_3$ be a 4-uniform family with center $x$ and core $E$ and let $B=\{x\}\cup E$. If $d_{\{x_1, x_2, x_3\}}\ge n-3$, then either $\{x_1, x_2, x_3\} \subset B$ or $|\{x_1, x_2, x_3\} \cap B|=2$ with $x\in \{ x_1, x_2, x_3\}$.
\end{clm}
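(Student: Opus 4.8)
The plan is a direct structural count inside $\mathcal{G}_3$. Write $T=\{x_1,x_2,x_3\}$ and $B=\{x\}\cup E$; throughout I assume $x\notin E$, so that $|B|=4$ (this is the standing convention for the core and center of $\mathcal{G}_3$; cf.\ Claim~\ref{clm 2}). Since $\mathcal{F}\subseteq\mathcal{G}_3$, it suffices to show that the number of $4$-sets of $\mathcal{G}_3$ containing $T$ is strictly less than $n-3$ whenever $T$ is in neither permitted position. Every $4$-set containing $T$ has the form $G=T\cup\{z\}$ with $z\in[n]\setminus T$, so there are exactly $n-3$ candidates, and such a $G$ lies in $\mathcal{G}_3$ precisely when $E\subseteq T\cup\{z\}$, or when $x\in T\cup\{z\}$ and $(T\cup\{z\})\cap E\neq\emptyset$. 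Thus the whole argument reduces to counting, for each relative position of $T$ and $B$, the admissible values of the single free element $z$.

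I would organise the case analysis by $|T\cap B|$ and by whether $x\in T$, and check that this split is exhaustive. If $T\subseteq B$, or if $|T\cap B|=2$ with $x\in T$, we are already in one of the two conclusions, so nothing is needed (indeed every $z$ is admissible here, giving $d_T=n-3$). In each remaining configuration I will verify that at most three values of $z$ are admissible: when $x\in T$ and $T\cap E=\emptyset$ only $z\in E$ works (three values); when $|T\cap B|=2$ with $x\notin T$ the set $T$ contains two elements of $E$, and only $z=x$ or $z$ equal to the remaining element of $E$ works (two values); when $x\notin T$ and $|T\cap E|=1$ only $z=x$ works; and when $T\cap B=\emptyset$ no $z$ works. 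In every such subcase the alternative $E\subseteq T\cup\{z\}$ is impossible, since there is a single free slot $z$ but $T$ omits at least two elements of $E$, and the star condition then pins $z$ down as listed.

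Since $n>6$ gives $n-3\ge 4>3$, each excluded configuration forces $d_T\le 3<n-3$, contradicting the hypothesis $d_T\ge n-3$. Hence $T$ must lie in one of the two permitted positions, which is exactly the assertion of the claim.

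The only genuine difficulty here is bookkeeping: making sure the split on $\bigl(|T\cap B|,\ x\in T?\bigr)$ is complete, and avoiding a miscount coming from the fact that the single set $B$ itself lies in both defining families of $\mathcal{G}_3$. Parametrising each $4$-set containing $T$ by its extra element $z=G\setminus T$ removes the need for inclusion–exclusion and turns every subcase into a one-line inspection; the degenerate possibility $x\in E$ is dispatched by the opening convention.
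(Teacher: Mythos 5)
Your proof is correct and follows essentially the same route as the paper's: both exhaust the possibilities for $(|T\cap B|,\ x\in T?)$, count the admissible extensions $z$ with $T\cup\{z\}\in\mathcal{G}_3$ in each excluded configuration, and find the count is at most $3<n-3$. Your explicit parametrization by the single free element $z$ is a clean way to phrase the same bookkeeping the paper does case by case.
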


\begin{proof}
Suppose on the contrary that neither $\{x_1, x_2, x_3\} \subset B$ nor $|\{x_1, x_2, x_3\} \cap B|=2$ with $x\in \{ x_1, x_2, x_3\}$.
Since $\mathcal{F}\subseteq\mathcal{G}_3$, it's easy to see that if $\{x_1, x_2, x_3\}\subseteq [n]\setminus B$, then $d_{\{x_1, x_2, x_3\}}=0$, so $1\le|\{x_1, x_2, x_3\}\cap B|\le 2$. First consider that $|\{x_1, x_2, x_3\}\cap B|=1$. If $\{x_1, x_2, x_3\}\cap B=\{x\}$, then the sets containing $\{x_1, x_2, x_3\}$ in $\mathcal{F}$ must intersect with $E$, so $d_{\{x_1, x_2, x_3\}}\le3<n-3$, a contradiction. If $|\{x_1, x_2, x_3\}\cap E|=1$, then the set containing $\{x_1, x_2, x_3\}$ in $\mathcal{F}$ must contain $x$, so $d_{\{x_1, x_2, x_3\}}\le1<n-3$, also a contradiction. Hence $|\{x_1, x_2, x_3\}\cap B|=2$. By hypothesis, $|\{x_1, x_2, x_3\}\cap E|=2$, w.l.o.g., let $\{x_1, x_2, x_3\}\cap E=\{x_1, x_2\}$, then $d_{\{x_1, x_2, x_3\}}\le2$ since the possible sets in $\mathcal{F}$ containing $\{x_1, x_2, x_3\}$ are $\{x_1, x_2, x_3\}\cup \{x\}$ and $E\cup \{x_3\}$, a contradiction.
\end{proof}

{\it Proof of Lemma \ref{lem2.1}}.
We first consider that $k\ge 5$.

In {\it Case 1}, i.e., $S_{xy}(\mathcal{H}_1)$ is EKR with center $x$, we  take $X_1=\{x, y\}$. In {\it Case 2}, since $S_{xy}(\mathcal{H}_2)$ is HM at center $x$, let $E=\{z_1, z_2, \dots ,z_k\}$ be the only member  missing $x$, and without loss of generality, we assume $z_1\ne y$, and take $X_2=\{x, y, z_1\}$. In {\it Case 3}, $S_{xy}(\mathcal{H}_3)\subseteq\mathcal{J}_2$ with center $x$, kernal $\{z_1, z_2, \dots ,z_{k-1}\}$. Without loss of generality, we assume $z_1\ne y$, and take $X_3=\{x, y, z_1\}$. We can see that for any set $G\in \mathcal{H}_i$, $G \cap X_i \ne \emptyset$, for $i=1,2,3$. After the shifts $S_{x'y'}$ for all $x'<y', x', y' \in  [n]\setminus X_i$ to $\mathcal{H}_i$, the resulting family $\mathcal{H'}_i$ satisfies that for every set $G' \in \mathcal{H'}_i$, $G'\cap X_i\ne \emptyset$. By the maximality of $|\mathcal{H}|$, we may assume that all $k$-sets containing $X_i  \,(i=1, 2, 3)$ are in $\mathcal{H}$, so is in $\mathcal{H}_i$. These sets will keep stable after any shift $S_{x'y'}$, so there are at least ${n-3 \choose k-2}$ (or ${n-4 \choose k-3}$) $>2$ sets  missing $x'$ in $\mathcal{H}'_i$.  Fact \ref{fact 2} (i), (ii) and (iii) implies that $\mathcal{H}'_i$ is neither EKR nor HM nor contained in $\mathcal{J}_2$.  We are done for $k\ge 5$.

We now assume that $k=4$. We will complete the proof by showing the following Lemmas corresponding to Cases 1-5 in Remark \ref{remark2.4}

\begin{lemma}[{\it Case 1}]\label{case1}
If we each a 4-uniform family $\mathcal{H}_1$ such that $S_{xy}(\mathcal{H}_1)$ is EKR at $x$, then there is a set $X_1=\{x, y, y', z, w\}$ such that after a series of shifts $S_{x'y'}$ $(x'<y'$ and $x', y'\in [n]\setminus X_1)$ to $\mathcal{H}_1$, we will reach a stable family $\mathcal{G}$ satisfying the conditions of Theorem \ref{thm1.5}. Moreover, $\{y, y', z, w\}$ or $ \{x, y', z, w\}$ is in  $\mathcal{G}$. Furthermore,   $G\cap \{x, y\}\ne \emptyset$ for any $G\in \mathcal{G}$.
\end{lemma}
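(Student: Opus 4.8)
The plan is to read off the rigid structure that Case~1 imposes on $\mathcal{H}_1$ and then choose $X_1$ so that this structure is frozen by the shifts we are still allowed to perform. Since $S_{xy}(\mathcal{H}_1)$ is EKR with centre $x$ while $\mathcal{H}_1$ has at least three members avoiding $x$, every member of $\mathcal{H}_1$ that avoids $x$ must contain $y$ (otherwise it would still avoid $x$ after the shift). Write $\mathcal{H}_1=\mathcal{S}\sqcup\mathcal{M}$, where $\mathcal{S}=\{G\in\mathcal{H}_1:x\in G\}$ and $\mathcal{M}=\{G\in\mathcal{H}_1:x\notin G\}$; then $|\mathcal{M}|\ge 3$, $y\in M$ for every $M\in\mathcal{M}$, and consequently $G\cap\{x,y\}\ne\emptyset$ for every $G\in\mathcal{H}_1$ — a property visibly invariant under any shift $S_{x'y'}$ with $x',y'\notin\{x,y\}$, hence under every shift we are allowed once $x,y\in X_1$. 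Moreover every $4$-set containing $\{x,y\}$ meets every member of $\mathcal{H}_1$ (it meets $\mathcal{S}$ at $x$ and $\mathcal{M}$ at $y$), so adjoining all such sets keeps the family intersecting, keeps each of the forbidden properties (EKR, HM, $\subseteq\mathcal{J}_2$, $\subseteq\mathcal{G}_2$, $\subseteq\mathcal{G}_3$) false — these are monotone under enlarging a family — and keeps every point missed by at least three sets; by the maximality of $\mathcal{H}_1$ these sets already lie in it, and the identical argument gives $\mathcal{S}=\{G:x\in G,\ G\cap M\ne\emptyset\ \forall M\in\mathcal{M}\}$ and $\mathcal{M}=\{G:y\in G,\ x\notin G,\ G\cap S\ne\emptyset\ \forall S\in\mathcal{S}\}$.

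I would then choose $X_1$ according to $t:=|\bigcap\mathcal{M}|$, which lies in $\{1,2,3\}$ since $y\in\bigcap\mathcal{M}$ and $|\mathcal{M}|\ge 2$. If $t=3$ then $\bigcap\mathcal{M}=E$ is a $3$-set containing $y$, $\mathcal{M}=\{E\cup\{p\}:p\in P\}$, and the two saturation identities force $|P|=3$ with $\mathcal{H}_1\cong\mathcal{J}_3$ (centre $x$, kernel $E$, pages $P$) — the only other option, $|P|=n-3$ with $\mathcal{H}_1\cong\mathcal{G}_3$, is excluded by hypothesis; here I put $X_1=\{x\}\cup E\cup\{p_1\}$ for some $p_1\in P$, so the admissible shifts merely relabel the two remaining pages and the free coordinates and the terminal family is again $\mathcal{J}_3$, and $\{y,y',z,w\}=E\cup\{p_1\}\in\mathcal{G}$. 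If $t\in\{1,2\}$, then (as one checks, otherwise $\mathcal{H}_1$ would itself be $\subseteq\mathcal{G}_2$ or EKR) $\mathcal{M}$ carries no four sets whose ``tails'' over $\bigcap\mathcal{M}$ are pairwise disjoint; I fix $M_1\in\mathcal{M}$ with $\bigcap\mathcal{M}\subseteq M_1$ and set $X_1=\{x\}\cup M_1$, so $\{y',z,w\}=M_1\setminus\{y\}$ and $\{y,y',z,w\}=M_1$, with the one residual exception in which $\mathcal{M}$ is (essentially) a sunflower with core $\{y\}$ having three petals; there I instead put $X_1=\{x,y\}\cup T$ for a $3$-element transversal $T$ of $\{M\setminus\{y\}:M\in\mathcal{M}\}$ (one exists, else $\mathcal{S}$ would lie in the star at $y$ and $\mathcal{H}_1$ would be EKR), giving the alternative $\{x,y',z,w\}=\{x\}\cup T\in\mathcal{S}$. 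In every case $|X_1|=5$ and $x,y\in X_1$; I now perform all shifts $S_{x'y'}$ with $x'<y'$ and $x',y'\in[n]\setminus X_1$, arriving at a family $\mathcal{G}$ with $|\mathcal{G}|=|\mathcal{H}_1|$ that is stable for all these shifts.

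It remains to verify that $\mathcal{G}$ satisfies the hypotheses of Theorem~\ref{thm1.5}. Three facts are immediate: $G\cap\{x,y\}\ne\emptyset$ for all $G\in\mathcal{G}$; every $4$-set containing $\{x,y\}$ lies in $\mathcal{G}$ (such a set is stable under every admissible shift because its image again contains $\{x,y\}$ and is already present), hence in particular the chosen witness $M_1$, respectively $\{x\}\cup T$ — being contained in $X_1$ — is frozen and lies in $\mathcal{G}$, which yields the ``moreover'' and ``furthermore'' clauses; and every point of $[n]$ is missed by at least three members of $\mathcal{G}$, because for a point of $X_1$ the admissible shifts never delete or insert it so the count equals the one in $\mathcal{H}_1$, while for $p\notin X_1$ the ${n-3\choose 2}\ge 3$ sets $\{x,y,u,v\}$ with $u,v\ne p$ miss $p$. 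Since the centre of an EKR, of an HM, and of a $\mathcal{J}_2$ family is missed by at most $0$, $1$, $2$ of its sets respectively, $\mathcal{G}$ is none of these.

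The crux — and the step I expect to fight hardest — is showing $\mathcal{G}\not\subseteq\mathcal{G}_2$ and $\mathcal{G}\not\subseteq\mathcal{G}_3$. Because $\mathcal{G}$ contains all $4$-sets through $\{x,y\}$, the number of members of $\mathcal{G}$ containing $\{x,y\}$ is at least ${n-2\choose 2}$, which for $n\ge 2k+1$ exceeds both $2n-7$ and $3n-12$; so by Claim~\ref{clm 1} (for $\mathcal{G}_2$) and Claim~\ref{clm 2} (for $\mathcal{G}_3$) the core of any such $\mathcal{G}_2$ or $\mathcal{G}_3$ would contain $\{x,y\}$, and for $\mathcal{G}_3$ one of $x,y$ would be its centre, while Claim~\ref{clm00} pins the core down further. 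This leaves only a short list of configurations, and in each I would produce a member of $\mathcal{G}$ violating the containment, as follows. If the purported core lies inside $X_1$, the frozen witness ($M_1$ or $\{x\}\cup T$, or a $4$-set through $\{x,y\}$ with two generic coordinates) is forced to contain that core, and pulling back through the shifts — which preserve the $X_1$-trace of every set — forces every member of $\mathcal{M}$ (or every member of $\mathcal{H}_1$ avoiding $y$) to contain a common $3$-set, i.e.\ $\bigcap\mathcal{M}$ to have size $3$, contradicting $t\le 2$ (or, in the $t=3$ branch, contradicting $\mathcal{J}_3\not\subseteq\mathcal{G}_3$). If instead the purported core has a free element, then either a transversal set $\{x\}\cup T'$ of $\mathcal{S}$ chosen to miss that element survives the shifts into a member of $\mathcal{G}$ that still misses it (a contradiction), or no such transversal exists — but then the saturation identities force $\mathcal{H}_1$ itself to be $\subseteq\mathcal{G}_2$, or EKR, contradicting the standing hypotheses on $\mathcal{H}_1$. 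The delicacy, and why the case split on $t$ and the switch to an $\mathcal{S}$-witness in the residual configuration are both needed, is that $X_1$ has only three slots to spend beyond $\{x,y\}$, so the choice must guarantee that in every one of these $\mathcal{G}_2$- or $\mathcal{G}_3$-configurations the required witness is either completely inside $X_1$ or genuinely free outside it.
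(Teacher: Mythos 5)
Your approach is genuinely different from the paper's. The paper starts with $X_1=\{x,y\}$, shifts until (if ever) a shift $S_{x'y'}$ would land in $\mathcal{G}_2$ or $\mathcal{G}_3$, and only then \emph{adaptively} enlarges $X_1$ by freezing a witness set drawn from the family $\mathcal{H}_4$ or $\mathcal{H}_5$ encountered at that moment (Claims~\ref{clm 3}--\ref{clm 6}). You instead classify $\mathcal{H}_1$ \emph{upfront} by $t=\lvert\bigcap\mathcal{M}\rvert$ and choose $X_1$ before performing any admissible shift. The saturation identities you derive and the dichotomy for $t=3$ (either $\mathcal{J}_3$ or $\mathcal{G}_3$) are correct, and your observation that a core $A\subseteq X_1$ pulls back (since admissible shifts preserve $X_1$-traces) to force $\mathcal{H}_1\subseteq\mathcal{G}_2$ or $\mathcal{G}_3$ is a clean and valid argument that the paper uses implicitly via the frozen witness.

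However, there is a genuine gap in the step you yourself flag as the crux. When the putative core $A$ of a $\mathcal{G}_2$ (or the 4-set $B$ of a $\mathcal{G}_3$) containing the terminal family $\mathcal{G}$ has an element $a\notin X_1$, you propose to ``choose a transversal set $\{x\}\cup T'$ of $\mathcal{S}$ missing $a$'' and assert that it ``survives the shifts into a member of $\mathcal{G}$ that still misses it.'' This is not justified. The admissible shifts are exactly those $S_{x'y'}$ with $x',y'\in[n]\setminus X_1$; since $a\notin X_1$, the shift $S_{a,y'}$ (when $a<y'$) is admissible and may well \emph{insert} $a$ into the image of $\{x\}\cup T'$, so the ``surviving witness'' may cease to miss $a$ by the time the process terminates. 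You cannot track a single preimage through the shifts and hope it stays a counterexample unless you control which shifts touch $a$ — but $a$ is only determined \emph{after} the terminal family is known, so you cannot put it in $X_1$ in advance. This is precisely the difficulty that the paper's adaptive strategy is designed to circumvent: in Claims~\ref{clm 4} and~\ref{clm 6}, the extra coordinates $y',z,w$ are chosen from the \emph{offending family} $\mathcal{H}_4$ or $\mathcal{H}_5$ that actually arose, guaranteeing the frozen witness contradicts that specific $\mathcal{G}_2$/$\mathcal{G}_3$ shape (and then Claim~\ref{clm 5} handles the one further degenerate configuration). Your second fallback — ``or no such transversal exists, but then $\mathcal{H}_1\subseteq\mathcal{G}_2$ or is EKR'' — is stated as an alternative rather than proved; making it rigorous would require showing that for \emph{every} free $a$, either the transversal survives or the saturation identities give a contradiction, and neither branch is established. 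Until this case is closed, the proposal does not yet constitute a proof of the lemma.

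A secondary, fixable issue: in the $t=3$ dichotomy the ``other option'' has $\lvert P\rvert=n-4$, not $n-3$ (since $P\subseteq[n]\setminus(\{x\}\cup E)$); and in the $t\le 2$ branch you need to justify, not merely assert, that the only residual exception to $X_1=\{x\}\cup M_1$ is the sunflower with core $\{y\}$, since $\mathcal{M}$ can be a fairly arbitrary intersecting subfamily of the star at $y$ once $t\le 2$.
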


\begin{proof}
Since $S_{xy}( \mathcal{H}_1)$ is EKR, for any $F\in \mathcal{H}_1$, we have $F \cap \{x, y\} \ne \emptyset$. Any set obtained by performing shifts $[n]\setminus \{x, y\}$ to a set in $\mathcal{H}_1$ still contains $x$ or $y$.
We will show Claims \ref{clm 3}, \ref{clm 4} and \ref{clm 6} implying Lemma \ref{case1}.
\begin{clm}\label{clm 3}
Performing shifts in $[n]\setminus \{x, y\}$ to $\mathcal{H}_1$ repeatedly will not reach {\it Cases 1-3} in Remark \ref{remark2.4}.
\end{clm}
\begin{proof}
Since $S_{xy}( \mathcal{H}_1)$ is EKR, for any $G\in \mathcal{H}_1$, we have $G \cap \{x, y\} \ne \emptyset$. By the maximality of $|\mathcal{H}| $ ($|\mathcal{H}_1|$ as well), we have
\begin{align}\label{eq10}
&\left\{G\in {[n]\choose k}: \{x, y\}\subseteq G \right\}\subseteq \mathcal{H}_1,\nonumber\\
&\vert \left\{ G\in \mathcal{H}_1: \{x, y\}\subseteq G \right\}\vert={n-2\choose 2}.
\end{align}
All these sets containing $\{x, y\}$ are stable after performing $S_{x'y'}\,\,(x'<y', x', y'\not \in \{x, y\}) $. So there are still at least ${n-3 \choose 2}>2$ sets missing $x'$ after $S_{x'y'}$, so  we will not reach {\it Case 1-3}.
\end{proof}

\begin{clm}\label{clm 4}
If performing some shifts in $[n]\setminus \{x, y\}$ repeatedly to $\mathcal{H}_1$ reaches $\mathcal{H}_4$ in {\it Case 4}( $S_{x'y'}(\mathcal{H}_4)\subseteq\mathcal{G}_2$), then there exists $X_1=\{x, y, y', z, w\}$ such that performing shifts in $[n]\setminus X_1$ repeatedly to $\mathcal{H}_4$ will not reach {\it Cases 1-5} as in Remark \ref{remark2.4}, and $\{y, y', z, w\}$ or $ \{x, y', z, w\}$ is in the final stable family $\mathcal{G}$.
\end{clm}

\begin{proof}
Assume that after some shifts in $[n]\setminus \{x, y\}$ to $\mathcal{H}_1$, we get $\mathcal{H}_4$ such that $S_{x'y'}(\mathcal{H}_4)\subseteq\mathcal{G}_2$ with core $A$. Since there are ${n-2 \choose 2}$ sets containing $\{x, y\}$ in
$\mathcal{H}_1$ and they are stable (so in $\mathcal{H}_4$), and ${n-2 \choose 2} > 2n-7$ ($n\ge6$), by Fact \ref{fact 2} (ii) and Claim \ref{clm 1}, $A=\{x', x, y\}$. Since $S_{x'y'}(\mathcal{H}_4)\subseteq\mathcal{G}_2$ with core $\{x', x, y\}$, there exists $\{y, y', z_1, w_1\}$ (or $\{x, y', z_2, w_2\}$) in $\mathcal{H}_4$. Let $ X_1:=\{x, y, y', z_1, w_1\}$ (or $X_1:=\{x, y, y', z_2, w_2\}$). Clearly, any set containing $\{x, y\}$ and missing $x''\in [n]\setminus X_1$  are stable after performing shifts in $[n]\setminus X_1$ repeatedly to $\mathcal{H}_4$, so
performing shifts $S_{x''y''}, x'', y'' \in [n]\setminus X_1$ to $\mathcal{H}_4$ will not reach {\it Cases 1-3}. If we reach {\it Case 4}, that is we get a family $\mathcal{H'}_4$, such that $S_{x''y''}(\mathcal{H'}_4)\subseteq\mathcal{G}_2$ with core $A'$. By Fact \ref{fact 2} and Claim \ref{clm 1}, we have $A'=\{x'', x, y\}$. However,
$\{y, y', z_1, w_1\}$ (or $\{x, y', z_2, w_2\}$) is stable under all the shifts in $[n]\setminus X_1$,  so it is still in $S_{x''y''}(\mathcal{H'}_4)$, contradicting that $S_{x''y''}(\mathcal{H'}_4)\subseteq\mathcal{G}_2$ with core $\{x'', x, y\}$. Thus we can not reach {\it Case 4}.

Now assume that after some shifts in $[n]\setminus X_1$ to $\mathcal{H}_4$, we get $\mathcal{H}_5$ such that $S_{x''y''}(\mathcal{H}_5)\subseteq\mathcal{G}_3$ with center and core forming a 4-set $B$ for some $x''$ and $y''\in [n]\setminus X_1$. By Fact \ref{fact 2} (iv), (\ref{eq10}) and Claim \ref{clm 2} (ii), we have $\{x, y, x''\}\subseteq B$. Since there are ${n-2 \choose 2}$ sets which contain $\{x, y\}$ in $\mathcal{H}_1$ (so in $S_{x''y''}(\mathcal{H}_5)$), we have one of the following cases:

($\ast$) $x$ is the center, and $y$ is in the core;

($\ast \ast$) $y$ is the center, and $x$ is in the core.

Recall that there exists $\{y, y', z_1, w_1\}$ or $\{x, y', z_2, w_2\}$ in $\mathcal{H}_4$. We will meet one of the following three cases:

(a) There is no set $G\in \mathcal{H}_4$  such that $G \cap \{x, y, x'\}=\{x\}$. So there exists $\{y, y', z_1, w_1\}\in \mathcal{H}_4$, and all sets containing $\{x', x\}$ in $S_{x'y'}(\mathcal{H}_4)$ are originally in $\mathcal{H}_4$. Take $X_1:=\{x, y, y', z_1, w_1\}$. By the maximality of $|\mathcal{H}|$ (so is $|\mathcal{H}_4|$), there are ${n-2 \choose 2}$ sets containing $\{x', x\}$ in $\mathcal{H}_4$ (so in $S_{x''y''}(\mathcal{H}_5)$ as well). This implies that $x'\in E$, and $x$ is the center. However, $\{y, y', z_1, w_1\}$ is contained in $S_{x''y''}(\mathcal{H}_5)$, a contraction to that $S_{x''y''}(\mathcal{H}_5)\subseteq\mathcal{G}_3$ with center $x$ and core $\{y, x', x''\}$.

(b) There is no set $G\in \mathcal{H}_4$ such that $G \cap \{x, y, x'\}=\{y\}$. So there exists $\{x, y', z_2, w_2\}\in \mathcal{H}_4$, and all sets containing $\{x', y\}$ in $S_{x'y'}(\mathcal{H}_4)$ are originally in $\mathcal{H}_4$. Take $X_1:=\{x, y, y', z_2, w_2\}$. By the maximality of $|\mathcal{H}|$ (so is $|\mathcal{H}_4|$), there are ${n-2 \choose 2}$ sets containing $\{x', y\}$ in $\mathcal{H}_4$, so in $S_{x''y''}(\mathcal{H}_5)$. This implies that $x'\in E$ and $y$ is the center for $S_{x''y''}(\mathcal{H}_5)$. However, $\{x, y', z_2, w_2\}$ is in $S_{x''y''}(\mathcal{H}_5)$, contradicting to that $S_{x''y''}(\mathcal{H}_5)\subseteq\mathcal{G}_3$ at center $y$ and core $\{x, x', x''\}$.

(c) There are both $\{y, y', z_1, w_1\}$ and $\{x, y', z_2, w_2\}$ in $\mathcal{H}_4$. We choose $X_1:=\{x, y, y', z_1, w_1\}$ first. Assume that ($\ast$) happens. Since $\{y, y', z_1, w_1\}$ is still in $S_{x''y''}(\mathcal{H}_5)$, this contradicts that $S_{x''y''}(\mathcal{H}_5)\subseteq\mathcal{G}_3$ with  center $x$ and $\{y, x''\}$ contained in the core. So we assume that ($\ast \ast$) happens. Let $B=\{x, y, x'', u\}$ for some $u$. If $u=x'$, then the existence of $\{y, y', z_1, w_1\}$ makes a contradiction again. Now consider $u\ne x'$.

\begin{clm} \label{clm 5}
If $u\ne x'$, then $u = y'$.
\end{clm}
\begin{proof}
Assume on the contrary that $u\ne y'$. We have shown that $S_{x''y''}(\mathcal{H}_5)$ can not be contained in $\mathcal{J}_2$ at center $y$, then there are  at least 3 sets containing $\{x, u, x''\}$. Although $\{x, x', x'', u\}$ and $\{x, y', x'', u\}$ may be two such sets, there must be $\{x, u, x'', v\} \in S_{x''y''}(\mathcal{H}_5)$ for some $v\in [n]\setminus \{x, y, u, x', y', x''\}$. However, every set in $\mathcal{H}_4$  contains $\{x, y\}$, or $\{x', x\}$, or $\{x', y\}$, or $\{x, y'\}$, or $\{y, y'\}$ by recalling that $S_{x'y'}(\mathcal{H}_4)\subseteq\mathcal{G}_2$ with core $\{x, y, x'\}$, so is every set in $S_{x''y''}(\mathcal{H}_5)$ since $x'', y''\in [n]\setminus \{x, y, y', z_1, w_1\}$, a contradiction.
\end{proof}
By Claim \ref{clm 5}, we have that $S_{x''y''}(\mathcal{H}_5)\subseteq\mathcal{G}_3$ at center $y$ and core $\{x, x'', y'\}$. This time,  we change $X_1$ to $X'_1:=\{x, y, y', z_2, w_2\}$. Similar to the lines in the first paragraph of the proof of Claim \ref{clm 4}, we will not reach {\it Cases 1-4} after performing shifts $S_{x'y'}$ in $[n]\setminus X'_1$. If we reach {\it Case 5}, that is, after some shifts in $[n]\setminus X'_1$ to $\mathcal{H}_4$, we get $\mathcal{H'}_5$ such that $S_{x'''y'''}(\mathcal{H'}_5)\subseteq\mathcal{G}_3$ with center and core forming a 4-set $B'$ for some $x''', y'''\in [n]\setminus X'_1$. By the previous analysis, $B'=\{x, y, x''', y'\}$, and we only need to consider the case that $x$ is the center (If $y$ is the center,  since $\{y, y', z_2, w_2\}$ is still in $S_{x'''y'''}(\mathcal{H'}_5)$, this contradicts that $S_{x'''y'''}(\mathcal{H'}_5)\subseteq\mathcal{G}_3$ with  center $y$ and core $\{x, y', x'''\}$). We have shown that $S_{x''y''}(\mathcal{H}_5)$ can not be contained in $\mathcal{G}_2$ with core $\{x, y, y'\}$, so there is $G\in S_{x''y''}(\mathcal{H}_5)$ such that $G\cap \{x, y\}=\emptyset $ or $G\cap \{x, y'\}=\emptyset$ or $G\cap \{y, y'\}=\emptyset$. Since $S_{x''y''}(\mathcal{H}_5)\subseteq\mathcal{G}_3$ with core $\{x, x'', y' \}$ and center $y$, $G$ must contain $x$ or $y$. If $G\cap \{y, y'\}=\emptyset$, it contradicts that $S_{x''y''}(\mathcal{H}_5)\subseteq\mathcal{G}_3$ with core $\{x, x'', y' \}$ and center $y$. So there is $G\in S_{x''y''}(\mathcal{H}_5)$ such that $G\cap \{x, y'\}=\emptyset$. After shifts in $[n]\setminus X'_1$ to $G$, we get $G'$ missing $x$ and $y'$ still. This contradicts that $S_{x'''y'''}(\mathcal{H'}_5)\subseteq\mathcal{G}_3$ with core $\{ y, x''', y'\}$ and center $x$. Hence, we will not reach {\it Case 5}.

In summary, we have shown that there exists $X_1$ in the form of $\{x, y, y', z, w\}$ such that performing shifts in $[n]\setminus X_1$ repeatedly to $\mathcal{H}_4$ will not reach {\it Cases 1-5} as in Remark \ref{remark2.4}. Moreover, $\{y, y', z, w\}$ or $ \{x, y', z, w\}$ is in the final stable family $\mathcal{G}$. This completes the proof of Claim \ref{clm 4}.
\end{proof}

\begin{clm}\label{clm 6}
If performing some shifts in $[n]\setminus \{x, y\}$ repeatedly to $\mathcal{H}_1$ does not reach {\it Cases1-4}, but reaches $\mathcal{H}_5$ in {\it Case 5} ($S_{x'y'}(\mathcal{H}_5)\subseteq\mathcal{G}_3$), then there exists $X_1$ in the form of $\{x, y, y', z, w\}$ such that performing shifts in $[n]\setminus X_1$ repeatedly to $\mathcal{H}_4$ will not reach {\it Cases 1-5} as in Remark \ref{remark2.4}. Moreover, $\{y, y', z, w\}$ or $ \{x, y', z, w\}$ is in the final stable family $\mathcal{G}$.
\end{clm}

\begin{proof}
Suppose that we get some $\mathcal{H}_5$ such that $S_{x'y'}(\mathcal{H}_5)\subseteq\mathcal{G}_3$ with center and core forming a 4-set $B$. By (\ref{eq10}) and Claim \ref{clm 2}, the center must be $x$ or $y$, and $\{x, y\}\subset B$.  By Fact \ref{fact 2} (iv), $X'\in B$ and $y'\not \in B$. Let $B=\{x, y, x', z\}$. We consider the case that $x$ is the center, the proof for $y$ being the center is similar.

Since $S_{x'y'}(\mathcal{H}_5)\subseteq\mathcal{G}_3$, and recall that we are under Case 1, every set in $\mathcal{H}_5$ intersects  $\{x, y\}$, there exists $\{y, y' , z, w\}$ (or $\{x, y' , z_1, z_2\}$) $\in \mathcal{H}_5$. And by the maximality of $|\mathcal{H}|$ (so is $|\mathcal{H}_5|$), we may assume that all the sets containing $\{x, z\}$ in $S_{x'y'}(\mathcal{H}_5)$ are originally in  $\mathcal{H}_5$.  Let $X_1:=\{x, y, y', z, w\}$ (or $\{x, y, y' , z_1, z_2\}$). Similar to the analysis in the first paragraph of the proof of  Claim \ref{clm 4}, for any shifts $S_{x''y''}$ to $\mathcal{H}_5$  in $[n]\setminus X_1$, we won't reach {\it Cases 1-4}. If we reach {\it Case 5} again, then the resulting family $S_{x''y''}(\mathcal{H'}_5)$ ($x''$ and $y'' \in [n]\setminus X_1$) must be contained in $\mathcal{G}_3$ with core $\{y, x'', z\}$ and center $x$. However $\{y, y' , z, w\}  \, (\text{or} \,\{x, y' , z_1, z_2\} )$ is still in $S_{x''y''}(\mathcal{H'}_5)$, and misses $x''$ and $x$ (or $\{x'', z, y\}\cap \{x, y', z_1, z_2\}=\emptyset$), contradicting that the family $S_{x''y''}(\mathcal{H'}_5)\subseteq\mathcal{G}_3$ with core $\{y, x'', z\}$ and center $x$. So we will not achieve {\it Case 5}, as desired.
\end{proof}

By Claims \ref{clm 3}, \ref{clm 4} and \ref{clm 6}, we have shown that if we reach a $4$-uniform family $\mathcal{H}_1$ such that $\mathcal{H}_1$ is EKR, then there exists a set $X_1$ with $|X_1|\le 5 $ and $\{x, y\}\subseteq X_1$ such that performing shifts $S_{x'y'}$ in $[n]\setminus X_1$ repeatedly to $\mathcal{H}_1$ will result in a stable family satisfying the conditions of Lemma \ref{case1}. This completes the proof of Lemma \ref{case1}.
\end{proof}

\begin{lemma}[{\it Case 2}]\label{case2}
If we each a 4-uniform family $\mathcal{H}_2$ such that $S_{xy}(\mathcal{H}_2)$ is HM at $x$, then there is a set $X_2=\{x, y, z_1, z_2, z_3\}$ such that after a series of shifts $S_{x'y'}$ $(x'<y'$ and $x', y'\in [n]\setminus X_2)$ to $\mathcal{H}_2$, we will reach a stable family $\mathcal{G}$ satisfying the conditions of Theorem \ref{thm1.5}. Moreover, $\{z_1, z_2, z_3, y\}$ or $\{z_1, z_2, z_3, z'_4\} $ $\in \mathcal{G}$. Furthermore, if $\{z_1, z_2, z_3, y\}\in \mathcal{G}$, then every member in $\mathcal{G}$ contains $x$ or $y$. If $\{z_1, z_2, z_3, z'_4\}\in \mathcal{G}$, then every other member in $\mathcal{G}$ contains $x$ or $y$.
\end{lemma}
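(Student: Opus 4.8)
The plan is to imitate the proof of Lemma~\ref{case1}: fix a $5$-element set $X_2$ containing $x$, $y$ and three points of the HM-core, certify a large subfamily of $\mathcal{H}_2$ that stays fixed under every shift avoiding $X_2$, and use it to rule out the recurrence of any of Cases~1--5 of Remark~\ref{remark2.4} once shifts are confined to $[n]\setminus X_2$. For the setup: since we are in Case~2 and not Case~1, $S_{xy}(\mathcal{H}_2)$ is HM but not EKR, so its core $E=\{z_1,z_2,z_3,z_4\}$ (the unique member missing the centre $x$) lies in $S_{xy}(\mathcal{H}_2)$; as any set newly produced by $S_{xy}$ must contain $x$ whereas $x\notin E$, the set $E$ is not newly produced, hence $E\in\mathcal{H}_2$. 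Consequently every member of $\mathcal{H}_2$ contains $x$, equals $E$, or contains $y$ and meets $E\setminus\{y\}$. I would take $X_2:=\{x,y,z_1,z_2,z_3\}$, relabelling so that $y=z_4$ when $y\in E$ (so $X_2=\{x\}\cup E$ there), and note every member of $\mathcal{H}_2$ meets $X_2$. Then, as in~(\ref{eq10}), maximality of $|\mathcal{H}|$ lets one add to $\mathcal{H}_2$ any $4$-set that meets every member of $\mathcal{H}_2$ without destroying a hypothesis of Theorem~\ref{thm1.5}, so all such sets already lie in $\mathcal{H}_2$: when $y=z_4$ this puts all ${n-2\choose 2}$ sets containing $\{x,y\}$ into $\mathcal{H}_2$, and when $y\notin E$ it puts all $n-3$ sets containing $\{x,y,z_i\}$ into $\mathcal{H}_2$ for each $i\in\{1,2,3\}$. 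These sets, together with $E$ (or its eventual descendant under shifts), stay fixed under every $S_{x'y'}$ with $x',y'\in[n]\setminus X_2$, and for each $x'\in[n]\setminus X_2$ more than two of them miss $x'$ (here $n\ge 2k+1=9$).

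Ruling out Cases~1--3 is then immediate: after a shift $S_{x'y'}$ with $x',y'\notin X_2$ this collection still contains more than two sets missing $x'$, whereas an EKR family has no member missing its centre, an HM family exactly one, and a subfamily of $\mathcal{J}_2$ at most two; since (as in Fact~\ref{fact 2}(i) and~(iii)) such a centre must be $x'$, no shift in $[n]\setminus X_2$ reaches Cases~1, 2, or~3.

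For Cases~4 and~5, suppose a shift sequence in $[n]\setminus X_2$ reaches $\mathcal{H}_4$ with $S_{x'y'}(\mathcal{H}_4)\subseteq\mathcal{G}_2$ with core $A$. The stable collection gives $d_{\{x,y\}}>2n-7$ in $S_{x'y'}(\mathcal{H}_4)$, so $\{x,y\}\subseteq A$ by Claim~\ref{clm 1}; and the descendant of $E$, a set of the shape $\{z_1,z_2,z_3,z_4'\}$, must also lie in $\mathcal{G}_2$ and hence meet the $3$-set $A$ in at least two points. When $y\notin E$ this is impossible (that set misses both $x$ and $y$), and when $y\in E$ it forces $A=\{x,y,z_i\}$ for some $i\le3$; in the latter case $\mathcal{H}_2\not\subseteq\mathcal{G}_2$ yields a set $G^\ast$ with $|G^\ast\cap A|\le1$ whose image under any shift in $[n]\setminus X_2$ still meets $A\subseteq X_2$ in at most one point, contradicting $S_{x'y'}(\mathcal{H}_4)\subseteq\mathcal{G}_2$. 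Case~5 ($S_{x'y'}(\mathcal{H}_5)\subseteq\mathcal{G}_3$) runs the same way, now using Claim~\ref{clm 2}(i),(ii) and Claim~\ref{clm00} (the stable collection supplies $d_{\{x,y\}}\ge 3n-12$ and $d_{\{x,y,z_i\}}\ge n-3$) to pin down the centre and core of the putative $\mathcal{G}_3$, together with $\mathcal{H}_2\not\subseteq\mathcal{G}_3$ and the surviving descendant of $E$. Hence shifts in $[n]\setminus X_2$ terminate in a stable family $\mathcal{G}$ that is not EKR, not HM, and not contained in $\mathcal{J}_2$, $\mathcal{G}_2$, or $\mathcal{G}_3$; since $x,y\in X_2$ are never deleted, the only member of $\mathcal{G}$ that may fail to contain $x$ or $y$ is the descendant of $E$, which stays of the form $\{z_1,z_2,z_3,z_4'\}$ and equals $\{z_1,z_2,z_3,y\}$ exactly when $y\in E$ --- yielding the ``moreover'' clauses.

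The main obstacle is the case $y\notin E$: then $X_2$ holds only three of the four core points, $E$ itself is not stable, and one must simultaneously certify a stable subfamily around $\{x,y\}$ large enough to feed the degree hypotheses of Claims~\ref{clm 1}--\ref{clm00} and track the descendant of $E$ through the entire shift sequence to extract the final structure. The delicate bookkeeping is in Cases~4 and~5, matching the forced centre and core of a would-be $\mathcal{G}_2$ or $\mathcal{G}_3$ against these stable sets and against the descendant of $E$.
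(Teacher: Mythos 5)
Your setup, choice of $X_2=\{x,y,z_1,z_2,z_3\}$, observation that $E\in\mathcal{H}_2$ (hence $\{x\}\cup (E\setminus\{y\})\in\mathcal{H}_2$ when $y\in E$), the stable subfamily from maximality, and the ruling out of Cases 1--3 all match the paper. The treatment of Cases 4--5, however, has real gaps.

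In Case 4 when $y\in E$ you consider the possibility $A=\{x,y,z_i\}$. This possibility is already excluded by Fact~\ref{fact 2}(ii), which forces $x'\in A$ (so $A=\{x,y,x'\}$); the descendant of $E$ is then $\{z_1,z_2,z_3,y\}$ with $|E\cap A|=1$ and one is done. Your workaround via $\mathcal{H}_2\not\subseteq\mathcal{G}_2$ is not wrong, but it obscures the fact that the core is pinned down, which matters for Case~5.

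Case 5 is where the proof is genuinely incomplete. You write that it ``runs the same way,'' but when $y\in E$ the descendant of $E$ is $\{z_1,z_2,z_3,y\}$, which \emph{contains} $y$: it is perfectly compatible with a $\mathcal{G}_3$ structure centered at $y$ with core $\{x,x',*\}$, so citing the descendant of $E$ alone does not yield a contradiction. The paper resolves this by also using the set $\{x,z_1,z_2,z_3\}\in\mathcal{H}_2$ (present because $E$ is stable under $S_{xy}$), which is entirely inside $X_2$ and hence survives all shifts; together with $G_0=\{y,z_1,z_2,z_3\}$, these two stable sets defeat \emph{both} choices of centre $x$ and $y$ (each misses $x'$, so neither can contain the core). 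Your proposal never brings $\{x,z_1,z_2,z_3\}$ into play. Separately, your stable-set count when $y\notin E$ gives $d_{\{x,y\}}\ge 3n-12$, which is not strict enough for Claim~\ref{clm 2}(ii); you gesture at Claim~\ref{clm00} and $d_{\{x,y,z_i\}}\ge n-3$ as a repair, which can indeed be made to work, but you would still need to carry out the case analysis (e.g.\ ruling out $|B\cap\{z_1,z_2,z_3\}|=3$ via Fact~\ref{fact 2}(iv) and Claim~\ref{clm 2}(i)) that the paper avoids by simply using the larger, non-stable but still $\{x,y\}$-containing family from~(\ref{eq12}) to get the strict bound $4n-18>3n-12$.
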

\begin{proof}
 Note that $S_{xy}(\mathcal{H}_2)$ contains exactly one set, say, $G_0=\{z_1, z_2, z_3, z_4\}$, that misses $x$. W.l.o.g., let $z_1, z_2, z_3\ne y$. Let $X_2:=\{x, y, z_1, z_2, z_3\}$. By the maximality of $|\mathcal{H}_2|$, we may assume
$$\left\{ G\in {X \choose 4}: \{x, y\} \subseteq G, G\cap G_0 \ne \emptyset
\right\}\subseteq \mathcal{H}_2.$$
If $y\in G_0$, that is, $ y=z_4$, then
\begin{align}\label{eq11}
\vert\{G\in \mathcal{H}_2: \{x, y\}\}\vert={n-2\choose 2},
\end{align}
Otherwise, $y\not \in G_0$. We have
\begin{align}\label{eq12}
\vert\{G\in \mathcal{H}_2: \{x, y\}\}\vert=4n-18.
\end{align}

In particular, $\{x, y , z_1, z_2\}$, $\{x, y , z_1, z_3\}$ and $\{x, y , z_2, z_3\}$ are in $\mathcal{H}_2$. Assume that applying shifts in $[n]\setminus X_2$ to $\mathcal{H}_2$, we get $\mathcal{H'}$, such that $S_{x'y'}(\mathcal{H'})$ is EKR or HM or contained in $\mathcal{J}_2$ at center $x'$. However, the three sets $\{x, y , z_1, z_2\}$, $\{x, y , z_1, z_3\}$ and $\{x, y , z_2, z_3\}$ are still in $S_{x'y'}(\mathcal{H'})$ and they miss $x'$, a contradiction. Thus we will not reach {\it Cases 1-3}.

 Assume we reach {\it Case 4} as in Remark \ref{remark2.4}, i.e., $S_{x'y'}(\mathcal{H'})\subseteq\mathcal{G}_2$  with core $A$. By (\ref{eq11}), (\ref{eq12}), Claim \ref{clm 1} and Fact \ref{fact 2} (ii),  we have $A=\{x, y, x'\}$. However $\{z_1, z_2, z_3\}\cap \{x, y , x', y'\} = \emptyset$, after a series of shifts of $[n]\setminus X_2$ to $G_0=\{z_1, z_2, z_3, z_4\}$, we get the resulting set $G'_0\in \mathcal{H'}$ satisfying that $|G'_0\cap |\le \{x, y , x', y'\}1$, a contradiction to that $S_{x'y'}(\mathcal{H'})\subseteq\mathcal{G}_2$ with core $\{x, y, x'\}$. Thus we will not reach {\it Case 4}.

At last, assume $S_{x'y'}(\mathcal{H'})\subseteq\mathcal{G}_3$ as in Remark \ref{remark2.4} ({\it Case 5}) with center and core forming a 4-set $B$. By Fact \ref{fact 2} (iv), $x'\in B$. By Claim \ref{clm 2} (ii) and (\ref{eq11}), (\ref{eq12}), there are at least $4n-18> 3n-12$ ( $n > 6$ ) sets containing $\{x, y\}$, so $\{x, y, x'\}\subset B$. And if $\{x, y\}\subset E$, then the number of sets containing $\{x, y\}$ in $\mathcal{H'}$ is at most $2n-7$, which is smaller than $4n-18$, this contradicts to (\ref{eq12}). Thus the resulting family can only have center $x$ or center $y$. First assume $y\in G_0$, that is $y=z_4$ and $G_0=\{y, z_1, z_2, z_3\}$. This implies that $\{x, z_1, z_2, z_3\}\in \mathcal{H}_2$. Both $\{x, z_1, z_2, z_3\}$ and $G_0=\{z_1, z_2, z_3, z_4\}$ are stable under shifts $S_{x'y'}$ $(x'<y'$ and $x', y'\in [n]\setminus X_2)$, so both of them are in $S_{x'y'}(\mathcal{H'})$. Since $x, x' \not \in G_0$ and $S_{x'y'}(\mathcal{H'})\subseteq\mathcal{G}_3$ with $B \supset \{x, y, x'\}$, $x$ can not be the center. But if $y$ is the center, since $x', y \not \in \{x, z_1, z_2, z_3\}$, also a contradiction. Next assume $y \not \in G_0$.
Notice that $\{z_1, z_2, z_3\} \cap \{x, y, x', y'\} = \emptyset$, after a series of shifts of $[n]\setminus X_2$ to $G_0$, the resulting set $G'_0\in S_{x'y'}(\mathcal{H'})$ satisfies that $G'_0 \cap \{x, y\}= \emptyset$, also contradicts that $S_{x'y'}(\mathcal{H'})\subseteq\mathcal{G}_3$ with $B\supset\{x, y, x'\}$, hence we will not reach {\it Case 5}.

Notice that if $y\in G_0$, we have $\{x, z_1, z_2, z_3\}\in \mathcal{H}_2$ and $G_0=\{y, z_1, z_2, z_3\}\in \mathcal{H}_2$. Note that $\{z_1, z_2, z_3, y\}$ is stable under shifts $S_{x'y'}$ $(x'<y'$ and $x', y'\in [n]\setminus X_2)$,  so $G_0=\{z_1, z_2, z_3, y\}\in \mathcal{G}$. In this case, every member in $\mathcal{H}_2$ contains $x$ or $y$,  Since every member in $\mathcal{H}_2$ is stable at $x$ and $y$,   every member in $\mathcal{G}$ contains $x$ or $y$.
 If $y\not \in G_0$, then $G'_0=\{z_1, z_2, z_3, z'_4\}\in \mathcal{G}$ for some $z'_4\neq y$, and this is the only set in $\mathcal{G}$ that disjoint from set $\{x, y\}$.
\end{proof}

\begin{lemma}[{\it Case 3}]\label{case3}
If we each a 4-uniform $\mathcal{H}_3$ such that $S_{xy}(\mathcal{H}_3)\subseteq\mathcal{J}_2$ at center $x$, kernel $E$ and the set of pages $J$, then there is a set $X_3=\{x, y, z_1, z_2, z_3\}$ such that after a series of shifts $S_{x'y'}$ $(x'<y'$ and $x', y'\in [n]\setminus X_3)$ to $\mathcal{H}_3$, we will reach a stable family $\mathcal{G}$ satisfying the conditions of Theorem \ref{thm1.5} and $G\cap X_3\ne \emptyset$ for any $G\in \mathcal{G}$. Moreover,  either $|G\cap X_3|\ge 2$ for any  $G\in \mathcal{G}$, or $|G\cap G'|\ge 2$ if $G\cap X_3=\{x\}$ and $G'\cap X_3=\{y\}$.
\end{lemma}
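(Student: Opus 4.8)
The plan is to follow the template of Lemmas~\ref{case1} and~\ref{case2}. Write the hypothesis as $S_{xy}(\mathcal{H}_3)\subseteq\mathcal{J}_2$ with center $x$, kernel $E=\{e_1,e_2,e_3\}$ (a $(k-1)$-set, so of size $3$) and pages $J=\{j_1,j_2\}$. The structural facts I would extract first are: (a) $S_{xy}$ fixes every set containing $x$, so each member of $\mathcal{H}_3$ through $x$ already lies in $\mathcal{J}_2$; (b) the only $x$-free sets fixed by $S_{xy}$ are $E\cup\{j_1\}$ and $E\cup\{j_2\}$; (c) every $x$-free member of $\mathcal{H}_3$ moved by $S_{xy}$ contains $y$; and (d) since the five forbidden shapes are preserved under adjoining a single set, and adjoining a set never lowers the number of members missing a point, the maximality of $\mathcal{H}_3$ makes it a maximal intersecting family, so it contains every $k$-set meeting all of its members.

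I would take $X_3:=\{x,y\}\cup E$ as the default (of size $5$; smaller if $y\in E$). By (a)--(c) it is a transversal of $\mathcal{H}_3$ — the $x$-sets are met at $x$, the two frozen $x$-free sets at $E$, the remaining $x$-free sets at $y$ — and since a shift $S_{x'y'}$ with $x',y'\in[n]\setminus X_3$ leaves $G\cap X_3$ unchanged for every $G$, the transversal property, hence $G\cap X_3\neq\emptyset$, is inherited by the terminal family; one also notes that no $x$-free member of $\mathcal{H}_3$ meets $X_3$ only inside $E$, so a set meeting $X_3$ in a single point does so at $x$ or $y$. By (d), $\{x,y,e_i,e_j\}\in\mathcal{H}_3$ for every pair from $E$, and $\{x,y\}\cup J\in\mathcal{H}_3$. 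The three sets $\{x,y,e_i,e_j\}$ lie inside $X_3$ and each contains $\{x,y\}$, so after any shift $S_{x'y'}$ in $[n]\setminus X_3$ they survive and miss $x'$; hence the family never drops below three sets missing $x'$. By Fact~\ref{fact 2}(i),(iii) this rules out re-entering Cases~1--3, and the bound $d_{\{x,y\}}\ge 3n-11$ that these sets give is enough to apply Claims~\ref{clm 1} and~\ref{clm 2}.

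To rule out Cases~4 and~5, suppose a shift $S_{x'y'}$ would push the family into $\mathcal{G}_2$ (resp.\ $\mathcal{G}_3$). By Fact~\ref{fact 2}(ii),(iv) and Claims~\ref{clm 1},~\ref{clm 2} applied to $\{x,y\}$, the core (resp.\ the center-plus-core $4$-set) must contain $\{x,y,x'\}$. Since $\mathcal{H}_3\not\subseteq\mathcal{J}_2$, fix $F_0\in\mathcal{H}_3\setminus\mathcal{J}_2$: then $x\notin F_0$, $y\in F_0$, $F_0\not\supseteq E$, and $F_0$ meets $E$ or contains $J$. Using $F_0$ — and, in the one awkward sub-case where the purported $\mathcal{G}_3$ has center $y$ with a core meeting $F_0$, a further member witnessing $\mathcal{H}_3\not\subseteq\mathcal{G}_3$ — I would reassign the free elements of $X_3$ (subject to $|X_3|\le 5$, which is affordable since a single kernel element already meets both $E\cup\{j_\ell\}$, freeing two slots) so that some frozen witness meets the purported core in exactly one point: impossible for a member of $\mathcal{G}_2$, or for a member of $\mathcal{G}_3$ not containing its whole core. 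I expect this adaptive re-selection of $X_3$, handled exactly as in Claims~\ref{clm 4} and~\ref{clm 6}, to be the main obstacle: because $\mathcal{J}_2$ — unlike EKR or HM — does not pin down the $x$-free members of $\mathcal{H}_3$, no single universal witness need survive every shift or defeat every $\mathcal{G}_2/\mathcal{G}_3$ configuration, and the required casework is delicate.

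For the final (dichotomy) assertion I would first establish it in $\mathcal{H}_3$: a member $G$ with $G\cap X_3=\{x\}$ contains $x$ and avoids $E$, so as a member of $\mathcal{J}_2$ it is forced into the part $J\cup\{x\}\subseteq G$; a member $G'$ with $G'\cap X_3=\{y\}$ is $x$-free and avoids $E$, hence moved, and its $S_{xy}$-image is forced into the same part, giving $J\subseteq G'$; thus $J\subseteq G\cap G'$ and $|G\cap G'|\ge 2$. Since single-point intersections with $X_3$ occur only at $x$ or $y$, in the terminal family one of two things happens: either no member meets $X_3$ in just one point, which is the first alternative $|G\cap X_3|\ge 2$ for all $G$; or there are $x$-only and $y$-only members, and then — arranging, as part of the case-by-case choice of $X_3$ above, that the containment $J\subseteq G$, $J\subseteq G'$ is not destroyed by the shifts in $[n]\setminus X_3$ — one gets $|G\cap G'|\ge 2$, the second alternative. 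Thus the single choice of $X_3$ must be made to serve both the exclusion of Cases~4--5 and the dichotomy, and reconciling these two demands is the remaining delicate point of the argument.
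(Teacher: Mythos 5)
Your plan mirrors the paper's proof in its broad outline — a case split on where $y$ sits relative to $E$ and $J$, a transversal $X_3 \supseteq \{x,y\}$, maximality-witnesses in $\mathcal{H}_3$, and the counting input from Claims~\ref{clm 1} and \ref{clm 2} — but there are several concrete gaps that together leave the lemma unproved.

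First, your default $X_3 := \{x,y\}\cup E$ fails in the sub-case $y\in E$ (the paper's Case~(a)). There $|X_3|=4$ and the promised ``three sets $\{x,y,e_i,e_j\}$'' degenerate: the pairs $\{y,e_i\}$ give $3$-sets, leaving only the single $4$-set $\{x,y\}\cup(E\setminus\{y\})$ inside $X_3$. The paper fixes this by always taking a $5$-element $X_3$, namely $\{x\}\cup E\cup\{j\}$ for some $j\in J$ when $y\in E$, so that the three $4$-subsets of $X_3$ through $\{x,y\}$ are genuine. You should adopt that adjustment rather than treat $y\in E$ as a harmless shrinkage of $X_3$.

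Second, your exclusion of Cases~4 and~5 is not actually carried out. You propose ``reassigning the free elements of $X_3$'' adaptively, modelled on Claims~\ref{clm 4} and~\ref{clm 6}, and flag this as the hard part; but the paper's argument for this lemma needs no adaptive re-selection at all. With the fixed $X_3$, the witnesses $\{y\}\cup E$ and $\{x\}\cup E$ (Cases~(a),(b)) lie entirely inside $X_3$ and hence survive every subsequent shift, while in Case~(c) the witness $E\cup\{z_4\}$ is stable at $E\subseteq X_3$, so its image always meets the purported core in at most one point. All of these are in $\mathcal{H}_3$ by maximality (each intersects every member of $\mathcal{H}_3$, as one checks from $\mathcal{J}_2$). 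That concrete computation is what closes Cases~4 and~5; your text only gestures at it, and in fact your worry that ``no single universal witness need survive every shift'' is unfounded once $X_3$ is chosen large enough to contain (or stabilize) the right witness.

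Third, for the dichotomy you correctly derive $J\subseteq F\cap F'$ for the $\mathcal{H}_3$-preimages $F,F'$ of $G,G'$, matching the paper exactly, but you explicitly leave open whether $|G\cap G'|\ge 2$ persists through the shifts in $[n]\setminus X_3$ (since $z_4,z_5\notin X_3$ can be shifted away). The paper asserts ``consequently $|G\cap G'|\ge 2$'' at this point, and that inference is the one genuinely delicate step in Lemma~\ref{case3}; you have identified the subtlety but have not resolved it, so the last assertion of the lemma is not established by your argument. Until that step is supplied, and the Case~(a) choice of $X_3$ and the concrete Case~4/5 witnesses are filled in, the proof is incomplete.
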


\begin{proof}
We will meet one of the following three cases. Case (a): $y\in E$. In this case, let $E=\{y, z_1, z_2\}, J=\{z_3, z_4\}$ and $X_3:=\{x, y, z_1, z_2, z_3\}$. Case (b): $y\in J$. In this case, let $E=\{z_1, z_2, z_3\}, J=\{y, z_4\}$ and $X_3:=\{x, y, z_1, z_2, z_3\}$. Case (c): $y\in [n]\setminus (E\cup J\cup \{x\})$. In this case, let $E=\{z_1, z_2, z_3\}, J=\{z_4, z_5\}$ and  $X_3:=\{x, y, z_1, z_2, z_3\}$.

In each of the above three cases, by the maximality of $|\mathcal{H}|$ $(|\mathcal{H}_3|$ as well), $\{x, y, z_1, z_2\}$, $\{x, y, z_1, z_3\}$, $\{x, y, z_2, z_3\}$ are in $\mathcal{H}_3$, and  they are stable after a series of shifts in $[n]\setminus X_3$, so we will not reach {\it Cases 1-3} after performing shifts in $[n]\setminus X_3$. Assume that applying shifts in $[n]\setminus X_3$ to $\mathcal{H}_3$, we get $\mathcal{H''}$, such that $\mathcal{H'}:=S_{x'y'}(\mathcal{H''})\subseteq\mathcal{G}_2$ with core $A$. Similarly, by the maximality of $|\mathcal{H}_3|$ and direct computation, we have the following claim:
\begin{clm}\label{clm3.7}
There are at least ${n-2 \choose 2}, 4n-18, 3n-11$ members that contain $\{x, y\}$ in Cases (a), (b), (c) respectively.
\end{clm}
 Notice that ${n-2 \choose 2}, 4n-18, 3n-11> 2n-7$. By Claim \ref{clm 1}, Claim \ref{clm3.7} and Fact \ref{fact 2} (ii), $A=\{x, y, x'\}$. In Case (a) or (b), we can see that $\{y,  z_1, z_2, z_3\}\in \mathcal{H'}, |\{y,  z_1, z_2, z_3\}\cap A|=1$, a contradiction. In Case (c), we have  $\{z_1, z_2, z_3, z_4\}\in \mathcal{H}_3$, after some shifts in $[n]\setminus X_3$, it becomes $F$ in $\mathcal{H'}$, and $ |F\cap A|\le1$, a contradiction to that $\mathcal{H'}\subseteq\mathcal{G}_2$ with core $\{x, y, x'\}$. Thus we will not reach {\it Case 4} after performing shifts in $[n]\setminus X_3$ repeatedly.

At last, we assume that $\mathcal{H'}:=S_{x'y'}(\mathcal{H''})\subseteq \mathcal{G}_3$ with center and core forming a 4-set $B$. By Claim \ref{clm 2}, Claim \ref{clm3.7} and Fact \ref{fact 2} (iv), we have $\{x, y, x'\}\subseteq B$, and the center of $\mathcal{H'}$ must be $x$ or $y$. In Cases (a) and (b), we have $\{y, z_1, z_2, z_3\} \in \mathcal{H}_3$, so in $\mathcal{H'}$. Since $ x, x'\not \in \{y, z_1, z_2, z_3\}$, $\mathcal{H'}$ can not be contained in $\mathcal{G}_3$ with $B\supset \{x, y, x'\}$ and center $x$. Since $\{x, z_1, z_2, z_3\} \in \mathcal{H}_3$, so in $\mathcal{H'}$ as well. Notice that $ y, x'\not \in \{x, z_1, z_2, z_3\}$,  $\mathcal{H'}$ can not be contained in $\mathcal{G}_3$ with $B\supset \{x, y, x'\}$ and center $y$. A contradiction. Now consider Case (c). In this case, $\{z_1, z_2, z_3, z_4\}\in \mathcal{H}_3$. Because it is stable at $\{z_1, z_2, z_3\}$ under any shift in $[n]\setminus X_3$, the resulting set $\{z_1, z_2, z_3, z'_4\}$ does not contain  $x$ or $y$. This contradicts that $\mathcal{H'}\subseteq\mathcal{G}_3$ with $B\supset \{x, y, x'\}$ and center $x$ or $y$.

 If Case (a) or (b) happens, then  any 4-set $G\in \mathcal{G}$ satisfies $|G\cap X_3|\ge 2$. If Case (c) happens,
since $\{z_1, z_2, z_3, z_4\}$ and $\{z_1, z_2, z_3, z_5\}$ are the only two sets  disjoint from $\{x, y\}$ in $S_{xy}(\mathcal{H}_3)$, then every set in $\mathcal{H}_3$ (so in $\mathcal{G}$) missing $x$ and $y$ must contain $\{z_1, z_2, z_3\}$. If $x\in G$, $y\in G'$ and $G\cap \{z_1, z_2, z_3, y\}=G'\cap \{z_1, z_2, z_3, x\}=\emptyset$, let $F, F'$ $\in \mathcal{H}_3$ such that $G$ and $G'$ become their resulting sets in $\mathcal{G}$ after a series of shifts in $[n]\setminus X_3$. By the reason that $S_{xy}(\mathcal{H}_3)\subseteq \mathcal{J}_2$ with center $x$, kernel $\{z_1, z_2, z_3\}$ and the set of pages $\{z_4, z_5\}$, for any set $H\in\mathcal{H}_3$ satisfying that $|H\cap \{x, y\}|=1$ and $H\cap\{z_1, z_2, z_3\}=\emptyset$, we have $\{z_4, z_5\}\subseteq H$. So $\{z_4, z_5\}\subseteq F\cap F'$, consequently, $|G\cap G'|\ge 2$.
\end{proof}

\begin{lemma}[{\it Case 4}]\label{case4}
If we reach a 4-uniform $\mathcal{H}_4$ such that $S_{xy}(\mathcal{H}_4)\subseteq\mathcal{G}_2$ with core $\{x, x_1, x_2\}$, then there is a set $X_4=\{x, y, x_1, x_2, x_3\}$ such that after a series of shifts $S_{x'y'}$ $(x'<y'$ and $x', y'\in [n]\setminus X_4)$ to $\mathcal{H}_4$, we will reach a stable family $\mathcal{G}$ satisfying the conditions of Theorem \ref{thm1.5}. Moreover, $\{x, y, x_1, x_3\} \in \mathcal{G}$  and $G\cap X_4\ne \emptyset$ for any $G\in \mathcal{G}$.
\end{lemma}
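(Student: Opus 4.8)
The plan is to imitate Lemmas~\ref{case1}--\ref{case3}: read off from $S_{xy}(\mathcal{H}_4)\subseteq\mathcal{G}_2$ the coarse shape of $\mathcal{H}_4$, use it to choose $X_4$, exhibit a large shift‑stable collection of sets trapped inside $\mathcal{H}_4$, and then combine that collection with Fact~\ref{fact 2}, Claim~\ref{clm 1} and Claim~\ref{clm 2} to show that shifts inside $[n]\setminus X_4$ can never re‑enter Cases 1--5 of Remark~\ref{remark2.4}. \emph{Choosing $X_4$.} Since $S_{xy}$ moves only the coordinates $x,y$, the containment $S_{xy}(\mathcal{H}_4)\subseteq\mathcal{G}_2$ with core $\{x,x_1,x_2\}$ forces every $F\in\mathcal{H}_4$ to satisfy either $|F\cap\{x,x_1,x_2\}|\ge 2$, or else $x\notin F$, $y\in F$ and $|F\cap\{x_1,x_2\}|=1$; in particular $F\cap\{x_1,x_2\}\neq\emptyset$ for every $F\in\mathcal{H}_4$. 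Recall $\mathcal{H}_4\not\subseteq\mathcal{G}_2$, so there is a ``bad'' set $F_0$ of the second kind; relabelling the core so that $x_1\in F_0$, write $F_0=\{y,x_1,x_3,x_4\}$ with $x_3,x_4\notin\{x,y,x_1,x_2\}$, and set $X_4:=\{x,y,x_1,x_2,x_3\}$.

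\emph{Trapped sets.} Because every member of $\mathcal{H}_4$ meets $\{x_1,x_2\}$, the ${n-2 \choose 2}$ sets containing $\{x_1,x_2\}$ are pairwise intersecting and meet all of $\mathcal{H}_4$; since enlarging $\mathcal{H}_4$ keeps it out of every forbidden family and still leaves at least $3$ sets off each point, maximality of $|\mathcal{H}_4|$ forces all of them into $\mathcal{H}_4$. For the same reason $\{x,y,x_1,x_3\}$ and $\{x,y,x_2,x_3\}$ lie in $\mathcal{H}_4$ (a bad set is met through $y$; any set with $|F\cap\{x,x_1,x_2\}|\ge 2$ meets $\{x,y,x_1,x_3\}$ in $x$ or $x_1$ and meets $\{x,y,x_2,x_3\}$ in $x$ or $x_2$). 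Since $\{x_1,x_2\}\subseteq X_4$ and the two four‑sets are contained in $X_4$, no shift $S_{x'y'}$ with $x',y'\in[n]\setminus X_4$ disturbs any of these sets, so they survive --- the ${n-2 \choose 2}$‑family as a collection, the other two individually --- in every family $\mathcal{H}''$ reachable from $\mathcal{H}_4$ by such shifts and in each $S_{x'y'}(\mathcal{H}'')$. This already gives $d_{\{x_1,x_2\}}={n-2 \choose 2}$ throughout, $\{x,y,x_1,x_3\}\in\mathcal{G}$, and $G\cap\{x_1,x_2\}\neq\emptyset$ (hence $G\cap X_4\neq\emptyset$) for every $G$ in the terminal family $\mathcal{G}$.

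\emph{No return to Cases 1--5.} For $x'\in[n]\setminus X_4$ at least ${n-3 \choose 2}>2$ of the surviving $\{x_1,x_2\}$‑sets miss $x'$, so Fact~\ref{fact 2}(i),(iii) rules out Cases 1--3. If a shift $S_{uv}$ with $u,v\in[n]\setminus X_4$ returned us to Case 4, then (the previous family not being contained in $\mathcal{G}_2$) Fact~\ref{fact 2}(ii) makes the new core $\{u,z_1,z_2\}$, and ${n-2 \choose 2}>2n-7$ together with Claim~\ref{clm 1} forces it to be $\{u,x_1,x_2\}$; but the survivor $\{x,y,x_1,x_3\}$ lies in $S_{uv}(\mathcal{H}'')$ and meets $\{u,x_1,x_2\}$ only in $x_1$ (since $u\notin X_4$ and $x_2\notin\{x,y,x_1,x_3\}$), contradicting $S_{uv}(\mathcal{H}'')\subseteq\mathcal{G}_2$. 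If instead $S_{uv}$ returned us to Case 5, say with $\mathcal{G}_3$‑center $z_0$ and $B=\{z_0\}\cup E$, then ${n-2 \choose 2}>3n-12$ together with Claim~\ref{clm 2}(ii) gives $\{x_1,x_2\}\subseteq B$ and $z_0\in\{x_1,x_2\}$, while Fact~\ref{fact 2}(iv) gives $u\in B$; as $u\neq x_1,x_2$ this yields $B=\{x_1,x_2,u,w\}$ and $u\in E$. If $z_0=x_2$ then $E=\{x_1,u,w\}$ and the survivor $\{x,y,x_1,x_3\}$ contains neither the center $x_2$ nor all of $E$ (it misses $u$); if $z_0=x_1$ then $E=\{x_2,u,w\}$ and the survivor $\{x,y,x_2,x_3\}$ contains neither $x_1$ nor all of $E$; either way $S_{uv}(\mathcal{H}'')\not\subseteq\mathcal{G}_3$, a contradiction. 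Hence shifts inside $[n]\setminus X_4$ never re‑enter Cases 1--5 and terminate in a stable family $\mathcal{G}$ satisfying the conditions of Theorem~\ref{thm1.5}, which is the assertion of the lemma.

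\emph{Main obstacle.} The delicate step is the third one, ruling out a return to $\mathcal{G}_2$ or $\mathcal{G}_3$ --- this is where Lemma~\ref{case1} needed a long case split. What keeps it short here is the first‑step observation that \emph{every} member of $\mathcal{H}_4$ already meets the two‑point set $\{x_1,x_2\}$: this makes $d_{\{x_1,x_2\}}$ maximal and shift‑stable, which pins down the core (resp.\ the set $B$) of any future $\mathcal{G}_2$ (resp.\ $\mathcal{G}_3$) structure, after which the two small trapped four‑sets collide with it at once; note that one genuinely needs both $\{x,y,x_1,x_3\}$ and $\{x,y,x_2,x_3\}$, since every member of $\mathcal{H}_4$ meets $\{x_1,x_2\}$ and so no trapped set can avoid both $x_1$ and $x_2$. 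The one thing to be careful about is the inductive bookkeeping, exactly as in Lemmas~\ref{case1}--\ref{case3}: when one speaks of ``returning to Case $i$'' one is entitled to assume the preceding family is not yet contained in $\mathcal{G}_2$, $\mathcal{G}_3$, $\mathcal{J}_2$, etc., which is what licenses the use of Fact~\ref{fact 2}.
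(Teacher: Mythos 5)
Your proposal is correct and follows the same overall strategy as the paper's proof: fix $X_4=\{x,y,x_1,x_2,x_3\}$, use maximality to trap all $\binom{n-2}{2}$ sets containing $\{x_1,x_2\}$, rule out Cases 1--3 by the stability of those sets, and rule out Cases 4--5 via Claims~\ref{clm 1} and \ref{clm 2}(ii). The one genuine (small) divergence is the Case 5 endgame: the paper appeals to the count $d_{\{x,y\}}\ge 2n-7$ from (\ref{eq14}) and contrasts it with the ceiling $\mathcal{G}_3$ imposes on sets through $\{x,y\}$, whereas you instead locate both trapped four‑sets $\{x,y,x_1,x_3\}$ and $\{x,y,x_2,x_3\}$ (which the paper's maximality already gives, though the lemma statement only records the first) and make one of them collide directly with whichever of $x_1,x_2$ turns out to be the $\mathcal{G}_3$ center. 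Your version is slightly more hands‑on and avoids the last counting step; the paper's is marginally shorter since it reuses the displayed count (\ref{eq14}) rather than naming a second trapped set. Either route is fine, and your choice of $x_3$ from a ``bad'' set $F_0$ is harmless but unnecessary --- any $x_3\notin\{x,y,x_1,x_2\}$ would serve, as the maximality argument already forces $\{x,y,x_1,x_3\}$ and $\{x,y,x_2,x_3\}$ into $\mathcal{H}_4$.
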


\begin{proof}
Since $S_{xy}(\mathcal{H}_4)\subseteq\mathcal{G}_2$ with core $A$, by Fact \ref{fact 2} (ii), we have that $x\in A$ and $y\not \in A$. Let $A=\{x, x_1, x_2\}$. By the maximality of $|\mathcal{H}_4|$, we may assume
\begin{align*}
&\left\{ G\in {X \choose 4}: \{x_1, x_2\} \subseteq G \right\}\subseteq \mathcal{H}_4, \\
&\left\{G\in {X \choose 4}: \{x, y\} \subseteq G, G\cap \{x_1, x_2\} \ne \emptyset \right\}\subseteq \mathcal{H}_4.
\end{align*}
So
\begin{align}\label{eq13}
&\vert \left\{ G\in \mathcal{H}_4: \{x_1, x_2\} \subseteq G \right\}\vert={n-2 \choose 2}, \\ \label{eq14}
&\vert \left\{G\in \mathcal{H}_4: \{x, y\} \subseteq G, G\cap \{x_1, x_2\} \ne \emptyset \right\}\vert=2n-7.
\end{align}

Choose a  set $G=\{x, y, x_1, x_3\}\in \mathcal{H}_4$ and let $X_4:=\{x, y, x_1, x_2, x_3\}$. Since $S_{xy}(\mathcal{H}_4)\subseteq\mathcal{G}_2$ with core  $\{x, x_1, x_2\}$, every member in $\mathcal{H}_4$ intersects $X_4$.  Every member in $\mathcal{H}_4$ is stable at every element  in $X_4$ under   shifts $S_{x'y'}$ $(x'<y'$ and $x', y'\in [n]\setminus X_4)$. So $\{x, y, x_1, x_3\}$ is  in the final stable family $\mathcal{G}$  and $G\cap X_4\ne \emptyset$ for any $G\in \mathcal{G}$. What remains is to show that performing shifts $S_{x'y'}$ $(x'<y'$ and $x', y'\in [n]\setminus X_4)$ to $\mathcal{H}_4$ will not reach {\it Cases 1-5} in Remark \ref{remark2.4}.

 By (\ref{eq13}), for any $x'\in [n]\setminus X_4$, there are at least ${n-3 \choose 2}$ members in $\mathcal{H}_4$ missing $x'$,  so we can not reach {\it Cases 1-3}.

 Assume $\mathcal{H'}:=S_{x'y'}(\mathcal{H''})\subseteq\mathcal{G}_2$ with core $A'$. By (\ref{eq13}), Fact \ref{fact 2} (ii) and Claim \ref{clm 1}, $A'=\{x', x_1, x_2\}$. Since $G \in \mathcal{H'}$, and $|H\cap A'|=1$, we get a contradiction, hence we will not reach {\it Case 4}. At last, assume $\mathcal{H'}:=S_{x'y'}(\mathcal{H''})\subseteq\mathcal{G}_3$ with center and core forming a 4-set $B$. By Fact \ref{fact 2} (iv), $x'\in B$. By Claim \ref{clm 2} (ii) and (\ref{eq13}), $\{x_1, x_2\}\subseteq B$ and the center must be $x_1$ or $x_2$. That is $\{x_1, x_2, x'\}\subset B$. Since $|B|=4$, $|\{x, y\}\cap B|=0 $ or $1$. If $|\{x, y\}\cap B|=0 $, then the sets containing $\{x, y\}$ in $\mathcal{H'}$ must contain center and one point of core $A'$, so $d_{\{x, y\}}\le 3$. If $|\{x, y\}\cap B|=1$, then the sets containing $\{x, y\}$ in $\mathcal{H'}$ either contain center or contain core $A'$, so $d_{\{x, y\}}\le n-3+1=n-2$. These members containing $\{x, y\}$ in $\mathcal{H}_4$ are also in $\mathcal{H'}$, by (\ref{eq14}), there are at least $2n-7$, a contradiction. Hence we can not reach {\it Case 5}.
\end{proof}

\begin{lemma}[{\it Case 5}]\label{case5}
If we reach a 4-uniform $\mathcal{H}_5$ such that $S_{xy}(\mathcal{H}_5)\subseteq\mathcal{G}_3$ with center and core $E$ forming a 4-set $B$, then there is a set $X_5=\{x, y, x_1, x_2, x_3\}$ such that after a series of shifts $S_{x'y'}$ $(x'<y'$ and $x', y'\in [n]\setminus X_5)$ to $\mathcal{H}_5$, we will reach a stable family $\mathcal{G}$ satisfying the conditions of Theorem \ref{thm1.5}. Furthermore, $\vert G\cap X_5\vert\ge 2$ for each $G\in  \mathcal{G}$.
\end{lemma}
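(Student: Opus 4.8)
Throughout $k=4$. Write the $\mathcal{G}_3$ reached in Case~5 with centre $x_0$ and core $E$, so that $B:=\{x_0\}\cup E$ has four elements. By Fact~\ref{fact 2}(iv) either $x=x_0$, or $x\in E$ and then the proof of that fact also yields $y\notin B$. I would take $X_5:=\{x,y\}\cup(B\setminus\{x\})$, relabelled $\{x,y,x_1,x_2,x_3\}$, with one further arbitrary point $w_0$ adjoined in the single degenerate case $x=x_0,\ y\in E$; note $E\subseteq X_5$ and $B\subseteq X_5$. Since any shift $S_{x'y'}$ with $x',y'\in[n]\setminus X_5$ leaves $G\cap X_5$ unchanged for every $G$, the inequality $|G\cap X_5|\ge2$ for $G\in\mathcal{G}$ need only be checked on $\mathcal{H}_5$; and for $F\in\mathcal{H}_5$ this follows by distinguishing $S_{xy}(F)=F$ from $S_{xy}(F)=(F\setminus\{y\})\cup\{x\}\ne F$ (so $y\in F,\ x\notin F$) and using that $S_{xy}(\mathcal{H}_5)\subseteq\mathcal{G}_3$ forces the relevant set to contain $E$ or to contain $x_0$ and meet $E$.

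The engine of the argument is the maximality of $|\mathcal{H}_5|=|\mathcal{H}|$: any $4$-set meeting every member of $\mathcal{H}_5$ may be adjoined without violating the hypotheses of Theorem~\ref{thm1.5}, hence already lies in $\mathcal{H}_5$. In particular every $4$-set $\supseteq E$ lies in $\mathcal{H}_5$ (each member of $\mathcal{H}_5$ meets $E$, since $S_{xy}(\mathcal{H}_5)\subseteq\mathcal{G}_3$); these $n-3$ sets form a block that is carried to itself by every shift inside $[n]\setminus X_5$, and it contains $E\cup\{x\}$ and, when $y\notin E$, also $E\cup\{y\}$. Next I would locate the co-degree lever. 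If $x=x_0$: every member of $\mathcal{H}_5$ avoiding both $x$ and $y$ contains $E$, so every $4$-set containing $\{x,y\}$ and meeting $E$ lies in $\mathcal{H}_5$ and hence $d_{\{x,y\}}\ge3n-12$. If $x\in E$: every member of $\mathcal{H}_5$ avoiding $x_0$ contains $E\setminus\{x\}$, so for each of the two points $p\in E\setminus\{x\}$ every $4$-set $\supseteq\{x_0,p\}$ lies in $\mathcal{H}_5$, giving $d_{\{x_0,p\}}=\binom{n-2}{2}$. These co-degrees are unchanged by shifts inside $[n]\setminus X_5$ and, for $n\ge2k+1=9$, exceed the thresholds $2n-7$ and $3n-12$ appearing in Claims~\ref{clm 1} and \ref{clm 2}.

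Suppose now that some shift $S_{x'y'}$ with $x',y'\in[n]\setminus X_5$ would revisit one of Cases~1--5. Cases~1--3 cannot recur: by Fact~\ref{fact 2}(i),(iii) the new centre would be $x'$, while the block leaves $n-3>2$ sets missing $x'$. If the shift would land in $\mathcal{G}_2$ with core $A$, then Fact~\ref{fact 2}(ii) gives $x'\in A$ and Claim~\ref{clm 1} puts the high co-degree pair(s) into $A$; when $x=x_0$ this forces $A=\{x,y,x'\}$ and then the stable set $E\cup\{y\}$ (containing neither $x$ nor $x'$) is not in this $\mathcal{G}_2$, and when $x\in E$ the two pairs force $\{x_0\}\cup(E\setminus\{x\})\subseteq A$, so $A\subseteq X_5$, contradicting $x'\in A$. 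If the shift would land in $\mathcal{G}_3$ with centre $x_0'$ and core $E'$, then Fact~\ref{fact 2}(iv) gives $x'\in B':=\{x_0'\}\cup E'$ with $x'\notin X_5$; when $x=x_0$, Claim~\ref{clm 2}(i) gives $x_0'\in\{x,y\}$, hence $x'\in E'$, and whichever of the stable sets $E\cup\{x\},\ E\cup\{y\}$ omits the centre $x_0'$ cannot contain $E'\ni x'$, a contradiction; when $x\in E$, Claim~\ref{clm 2}(ii) applied to the two pairs forces $x_0'=x_0$ and $B'=\{x_0,x'\}\cup(E\setminus\{x\})$, so $E'=\{x'\}\cup(E\setminus\{x\})$ and again the stable set $E\cup\{y\}$, containing neither $x_0$ nor $x'$, contradicts $S_{x'y'}(\mathcal{F})\subseteq\mathcal{G}_3$. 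The degenerate case runs identically, using $w_0$ and the stable sets $E\cup\{w_0\},\ (E\setminus\{y\})\cup\{x,w_0\}$ of $\mathcal{H}_5$. Thus no permitted shift revisits Cases~1--5, and iterating the permitted shifts to stability yields the desired $\mathcal{G}$.

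I expect the second paragraph to be the main obstacle: deciding, separately in the subcases $x=x_0$ and $x\in E$, exactly which pair of points of $X_5$ has co-degree above the Claim~\ref{clm 1}--\ref{clm 2} thresholds in $\mathcal{H}_5$, which rests on the structural observation that members of $\mathcal{H}_5$ omitting the centre of the relevant $\mathcal{G}_3$ are confined to contain $E$, respectively $E\setminus\{x\}$. Once this lever is available, the analysis in the third paragraph is mechanical, and --- in contrast with Lemma~\ref{case1} --- no auxiliary claim pinning the new centre down is required, because Fact~\ref{fact 2}(ii),(iv) already drive the stray point $x'$ into the re-entered core, whereupon the stable block sets $E\cup\{x\}$ and $E\cup\{y\}$ close every case directly.
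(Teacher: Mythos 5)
Your proof is correct and follows essentially the same case structure as the paper's (the sub-cases $x=x_0,\,y\in E$; $x=x_0,\,y\notin E$; and $x\in E$ are exactly the paper's Cases (a), (b), (c)), but it is somewhat more uniform. Where the paper establishes bespoke co-degree equations (\ref{eq15})--(\ref{eq19}) separately in each sub-case, you extract a single maximality device---the block of all $E$-containing $4$-sets, which is fixed setwise by every shift in $[n]\setminus X_5$---and read both the high co-degree pairs and the contradiction-producing stable sets off it. The one substantive streamlining is in the $\mathcal{G}_3$ re-entry for the sub-case $x=x_0$, $y\notin E$: the paper invokes Claim~\ref{clm00} together with the $3$-codegree $d_E=n-3$ to constrain $B'$, whereas you argue directly from the two stable $4$-sets $E\cup\{x\}$ and $E\cup\{y\}$ (each lies inside $X_5$, so neither can contain $x'\in E'$ once the new centre is pinned to $\{x,y\}$ by Claim~\ref{clm 2}(i)), bypassing Claim~\ref{clm00} entirely; this is correct and cleaner. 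Two cosmetic slips: the remark that the block ``contains $E\cup\{x\}$'' only makes sense when $x\notin E$ (your sub-case $x\in E$ correctly switches to the pairs $\{x_0,p\}$, $p\in E\setminus\{x\}$, instead), and ``$n-3>2$ sets missing $x'$'' should read $n-4$, since the block member $E\cup\{x'\}$ contains $x'$. Neither affects soundness, and the verification that $|F\cap X_5|\ge 2$ for $F\in\mathcal{H}_5$, though left as a sketch, does go through on the case split you indicate.
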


\begin{proof}
For $S_{xy}(\mathcal{H}_5)$, we will meet one of the following three cases. Case (a): $x$ is the center, $y \in E$, and $E=\{y, x_1, x_2\}$. In this case, we may assume that $ \{y, x_1, x_2, x_3\}\in S_{xy}(\mathcal{H}_5)$ for some $x_3\in [n]\setminus B$. Let $X_5:= \{x, y, x_1, x_2, x_3\}$.  Case (b): $x$ is the center, $y \not \in E$, and $E=\{ x_1, x_2, x_3\}$. In this case, let $X_5:= \{x, y, x_1, x_2, x_3\}$.  Case (c): $x_1$ is the center, $x \in E$, and $E=\{x, x_2, x_3\}, y\in [n]\setminus B$. In this case, let $X_5:= \{x, y, x_1, x_2, x_3\}$. We first observe that $\vert G\cap X_5\vert\ge 2$ for each $G\in  \mathcal{H}_5$ in each case.

First we consider Case (a). In this case, a member in $\mathcal{H}_5$ must contain $x$ or $y$.
By the maximality of $|\mathcal{H}_5|$, we may assume
\begin{align*}
\left\{ G\in {X \choose 4}: \{x, y\} \subseteq G \right\}\subseteq \mathcal{H}_5.
\end{align*}
So
\begin{align}\label{eq15}
\vert \left\{ G\in \mathcal{H}_5: \{x, y\} \subseteq G \right\}\vert={n-2 \choose 2}.
\end{align}

Performing $S_{x'y'}$ in $[n]\setminus X_5$ to $\mathcal{H}_5$ will not reach {\it Cases 1-3} since there are at least ${n-3 \choose 2}$ members that containing $\{x, y\}$ and missing $x'$ in $\mathcal{H}_5$ and these sets are stable after $S_{x'y'}$ in $[n]\setminus X_5$  (by (\ref{eq15})).

Assume that $\mathcal{H'}:=S_{x'y'}(\mathcal{H''})\subseteq\mathcal{G}_2$ with core $A$. By (\ref{eq15}), Fact \ref{fact 2} (ii) and Claim \ref{clm 1}, $A=\{x', x, y\}$. Since $S_{xy}(\mathcal{H}_5)$ is not EKR, $\{y, x_1, x_2, x_3\} \in S_{xy}(\mathcal{H}_5)$, $\{x, x_1, x_2, x_3\} \in \mathcal{H}_5$, so in $\mathcal{H'}$. However, $|\{x, x_1, x_2, x_3\}\cap A|=1$, this is a contradiction, hence we will not reach {\it Case 4}. Assume that $\mathcal{H'}:=S_{x'y'}(\mathcal{H''})\subseteq\mathcal{G}_3$ with center and core forming a 4-set $B'$. By (\ref{eq15}), Fact \ref{fact 2} (iv) and Claim \ref{clm 2} (ii),  $\{x, y, x'\}\subseteq B'$, and the center is either $x$ or $y$. In either case, the existence of $\{x, x_1, x_2, x_3\}$ and $ \{y, x_1, x_2, x_3\} $ will lead to a contradiction. Hence we will not reach {\it Case 5}.

Next we consider Case (b).
By the maximality of $|\mathcal{H}_5|$, we may assume that
$$\left\{G\in {X \choose 4}: \{x, y\} \subseteq G\,\,\text{and}\,\, G\cap E \ne \emptyset \right\}\subseteq \mathcal{H}_5$$
and
$$\left\{G\in {X \choose 4}: \{x_1, x_2, x_3\} \subseteq G \right\}\subseteq \mathcal{H}_5.$$
In particular, $\{x, x_1, x_2, x_3\}\in \mathcal{H}_5$ and $\{y, x_1, x_2, x_3\}\in \mathcal{H}_5$.
Computing directly, we have
\begin{equation}\label{eq16}
 \vert\left\{G\in \mathcal{H}_5: \{x, y\} \subseteq G, G\cap E \ne \emptyset \right\}\vert=3n-12
 \end{equation}
and
\begin{equation}\label{eq17}
 \vert\left\{G\in \mathcal{H}_5: \{x_1, x_2, x_3\} \subseteq G \right\}\vert=n-3.
\end{equation}
Since $\{x, y, x_1, x_2\}, \{x, y, x_1, x_3\}, \{x, y, x_2, x_3\}\in \mathcal{H}_5$ and these sets miss $x'$ and are stable after shifts $S_{x'y'}$ ($x' <y'$ and $x', y' \in [n]\setminus X_5$), we will not reach {\it Cases 1-3}.

Assume $\mathcal{H'}:=S_{x'y'}(\mathcal{H''})\subseteq\mathcal{G}_2$ with core $A$, where $x' <y'$ and $x', y' \in [n]\setminus X_5$.
By (\ref{eq16}), Fact \ref{fact 2} (ii) and Claim \ref{clm 1}, we have $A=\{x', x, y\}$. However $\{x, x_1, x_2, x_3\}\in \mathcal{H'}$ and $|\{x, x_1, x_2, x_3\}\cap A|=1$, a contradiction, so we will not reach {\it Case 4}.

 Assume that $\mathcal{H'}:=S_{x'y'}(\mathcal{H''})\subseteq\mathcal{G}_3$ with center and core forming a 4-set $B'$. By Fact \ref{fact 2} (iv), $x' \in B'$.
Equation (\ref{eq16}) and Claim \ref{clm 2} (i)  imply that the center must be $x$ or $y$.

By Claim \ref{clm00} and (\ref{eq17}), either $\{x_1, x_2, x_3\} \subset B'$ or $|\{x_1, x_2, x_3\} \cap B'|=2$ and one of $\{ x_1, x_2, x_3\}$ is the center. But it's impossible to satisfy both conditions in the previous paragraph and this paragraph, hence we will not reach {\it Case 5}.

At last we consider Case (c). By the maximality of $|\mathcal{H}_5|$, we may assume
\begin{align*}
\left\{G\in {X \choose 4}: \{x_1, x_2\} \subseteq G \right\}\subseteq \mathcal{H}_5 \,\,\,\,
\text{and}\,\,\,\,
\left\{G\in {X \choose 4}: \{x_1, x_3\} \subseteq G \right\}\subseteq \mathcal{H}_5.
\end{align*}
By direct computation,
\begin{align}\label{eq18}
&\vert \left\{ G\in \mathcal{H}_5: \{x_1, x_2\} \subseteq G \right\}\vert={n-2 \choose 2}, \\ \label{eq19}
&\vert \left\{G\in \mathcal{H}_5: \{x_1, x_3\} \subseteq G \right\}\vert={n-2 \choose 2}.
\end{align}
Since there are ${n-3 \choose 2}$ sets containing $\{x_1, x_2\}$ but missing $x'$, after performing $S_{x'y'}$ in $[n]\setminus X_5$ to $\mathcal{H}_5$, we will not reach {\it Case 1-3}.

If we reach {\it Cases 4}, that is, after performing shifts in $[n]\setminus X_5$ to $\mathcal{H}_5$ repeatedly, $S_{x'y'}(\mathcal{H'})\subseteq\mathcal{G}_2$ with core $A$. By (\ref{eq18}), (\ref{eq19}), Fact \ref{fact 2} (ii) and Claim \ref{clm 1}, $x', x_1, x_2, x_3\in A$, but $|A|=3$, a contradiction. If we reach {\it Case 5}, that is $S_{x'y'}(\mathcal{H'})\subseteq\mathcal{G}_3$ with the center and the core forming a 4-set $B'$. By (\ref{eq18}), (\ref{eq19}), Fact \ref{fact 2} (iv) and Claim \ref{clm 1},  $B'=\{x_1, x_2, x_3, x'\}$, and $x_1$ is the center. Recall that $\{x, y, x_2, x_3\}\in \mathcal{H}_5$, also in $S_{x'y'}(\mathcal{H'})$, but $\{x, y, x_2, x_3\}\cap \{x_1, x', y'\} = \emptyset$, a contradiction, hence we cannot reach {\it Case 5}.

As remarked earlier,  $\vert G\cap X_5\vert\ge 2$ for each $G\in  \mathcal{H}_5$. Note that performing shifts in $[n]\setminus X_5$ to $\mathcal{H}_5$ keeps this property, so $\vert G\cap X_5\vert\ge 2$ for each $G\in  \mathcal{G}$.
\end{proof}

By Lemmas \ref{case1} to \ref{case5}, we have shown that if one of {\it Case 1-5} happens, then there exists a set $X_i$ with $|X_i| \le 5$ and $\{x, y\} \subseteq X_i$ such that performing shifts in $[n]\setminus X_i$ to $\mathcal{H}_i$ will not result in any case of {\it Case 1-5}, so the final family is a stable family satisfying the conditions in Theorem \ref{thm1.5}. Furthermore, $G \cap X_i \ne \emptyset$ for any set $G$ in the final family. So we complete the proof of Lemma \ref{lem2.1}.

\subsection{Proof of Lemma \ref{lem2.2}}

\begin{proof}
 We first consider $k\ge5$. In this case, we have $|X_i|\le 3$ and  $|Y_i|\ge 2k-3$.

  We first prove (ii). Assume on the contrary that there are $G$ and $G'\in \mathcal{G}$ such that $G\cap G' \cap Y= \emptyset $ and let $|G\cap G'|$ be the minimum among all pairs of sets in $\mathcal{G}$ not intersecting in $Y$. Clearly $|G\cap G' \cap ([n]\setminus Y)|\ge1$. Note that $| (G\cup G' )\cap Y_i|\le |G\cap Y_i|+|G'\cap Y_i|\le 2k-4$ (since $|G\cap ([n]\setminus Y)|\ge1$ and $|G\cap X_i|\ge1$, so $|G\cap Y_i|\le k-2$, same for $G'$). But $|Y_i|\ge 2k-3$, so there exists a point $a\in Y_i\setminus (G\cup G' )$. Pick any point $b\in G\cap G' \cap ([n]\setminus Y)$, we have $a<b$. Notice that $\mathcal{G}$ is stable on $[n]\setminus X_i$, so $G'':=(G'\setminus \{b\})\cup \{a\}\in \mathcal{G}$. Then $G\cap G'' \cap Y= \emptyset$ and $|G\cap G''|<|G\cap G'|$, contradicting the minimality of $|G\cap G'|$.

 For (i), assume on the contrary, that $\mathcal{A}_1\ne \emptyset$. Let $\{x\}\in \mathcal{A}_1$, then there is a set $G\in \mathcal{G}$ such that $G\cap Y=\{x\}$. By (ii), for any another set $G'\in \mathcal{G}$ we have $G\cap G' \cap Y\ne \emptyset$, so $x\in G'$. This implies that $\mathcal{G}$ is EKR, a contradiction, so $\mathcal{A}_1=\emptyset$.

Next consider for $k=4$. In this case, for $1\le i\le 5$, $|X_i|=5$ and $|Y_i|=9-5=4$,  and for $i=6$, $|X_i|=0$ and $|Y_i|=9 $.
\begin{clm}\label{clm3.8}
If $G$ and $G'$ in $\mathcal{G}$ satisfies that $|Y_i\setminus (G\cup G')|\ge |G\cap G'\cap ([n]\setminus Y)|$, then $G\cap G' \cap Y\ne \emptyset$.
\end{clm}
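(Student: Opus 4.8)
The plan is to argue by contradiction with a minimal counterexample, in the spirit of the proof of part (ii) above for $k\ge 5$; the difference is that the uniform bound $|Y_i|\ge 2k-3$ exploited there is no longer available when $k=4$ and $|X_i|=5$, so its role is played by the inequality in the hypothesis, which turns out to be reproduced by the shift we perform. Suppose the claim fails, so there exist $G,G'\in\mathcal{G}$ with $|Y_i\setminus(G\cup G')|\ge |G\cap G'\cap([n]\setminus Y)|$ but $G\cap G'\cap Y=\emptyset$; among all such pairs fix one for which $|G\cap G'|$ is smallest. Note first that $G\ne G'$, since $G\cap Y\supseteq G\cap X_i\ne\emptyset$; as $\mathcal{G}$ is intersecting it follows that $\emptyset\ne G\cap G'=G\cap G'\cap([n]\setminus Y)$, and we write $t=|G\cap G'|\ge 1$, so the hypothesis gives $|Y_i\setminus(G\cup G')|\ge t\ge 1$.

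Pick a point $a\in Y_i\setminus(G\cup G')$ and a point $b\in G\cap G'$, so $b\in[n]\setminus Y$. Because $Y_i$ consists of the smallest $9-|X_i|$ elements of $[n]\setminus X_i$ while $[n]\setminus Y=([n]\setminus X_i)\setminus Y_i$, every element of $[n]\setminus Y$ is larger than every element of $Y_i$; hence $a<b$, and $a,b\in[n]\setminus X_i$. Since $\mathcal{G}$ is stable on $[n]\setminus X_i$, the set $G'':=(G'\setminus\{b\})\cup\{a\}$ belongs to $\mathcal{G}$: if it already lies in $\mathcal{G}$ there is nothing to check, and otherwise $S_{ab}(G')=G''$ (as $a\notin G'$ and $b\in G'$), so $G''\in S_{ab}(\mathcal{G})=\mathcal{G}$.

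It remains to check that $(G,G'')$ is again a counterexample with strictly smaller intersection. Since $a\notin G$ we have $G\cap G''=(G\cap G')\setminus\{b\}$; in particular $|G\cap G''|=t-1$ and, as $G\cap G''\subseteq G\cap G'\subseteq[n]\setminus Y$, also $G\cap G''\cap Y=\emptyset$. Finally, because $b\in G$, replacing $b$ by $a$ in $G'$ changes the union only by adjoining $a$, i.e. $G\cup G''=(G\cup G')\cup\{a\}$, so $|Y_i\setminus(G\cup G'')|=|Y_i\setminus(G\cup G')|-1\ge t-1=|G\cap G''\cap([n]\setminus Y)|$; that is, $(G,G'')$ still satisfies the hypothesis of the claim. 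If $t=1$ this is already a contradiction, since then $G\cap G''=\emptyset$ while $\mathcal{G}$ is intersecting; if $t\ge 2$, then $(G,G'')$ is a counterexample with $|G\cap G''|<|G\cap G'|$, contradicting minimality. The one point requiring care — and the only place where the hypothesis is genuinely used — is precisely that the inequality $|Y_i\setminus(G\cup G'')|\ge|G\cap G''\cap([n]\setminus Y)|$ persists under the shift; this works because $a$ is chosen outside $G\cup G'$, so that both sides of the inequality drop by exactly one, letting the descent continue without any uniform lower bound on $|Y_i|$.
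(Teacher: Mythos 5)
Your proof is correct and follows essentially the same shifting idea as the paper, but it is organized differently: the paper replaces the entire set $D:=G\cap G'\cap([n]\setminus Y)$ by a same-size set $D'\subseteq Y_i\setminus(G\cup G')$ in one batch of shifts, producing a set $F=(G'\setminus D)\cup D'\in\mathcal{G}$ disjoint from $G$ and contradicting the intersecting property directly. You instead perform a single shift and argue by minimality of $|G\cap G'|$, which requires the extra (and correctly carried out) check that the inequality $|Y_i\setminus(G\cup G'')|\ge|G\cap G''\cap([n]\setminus Y)|$ is preserved because both sides drop by exactly one. Both presentations are valid; yours mirrors the style used for part~(ii) with $k\ge 5$, while the paper's one-batch version avoids having to re-verify the hypothesis at intermediate steps.

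One small imprecision worth flagging: the opening remark that $G\ne G'$ ``since $G\cap Y\supseteq G\cap X_i\ne\emptyset$'' fails when $i=6$, where $X_6=\emptyset$. This is harmless—if $G=G'$ then $G\cap G'=G\ne\emptyset$ trivially, the descent still applies (with $t=k$), and none of the subsequent steps actually depend on $G\ne G'$—but as written the justification is not valid in that case and could simply be dropped.
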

\begin{proof}
If $G\cap G'\cap Y=\emptyset$, then $D:=G\cap G'\cap ([n]\setminus Y)\ne \emptyset$.
Since $|Y_i\setminus (G\cup G')|\ge$ $|G\cap G'\cap ([n]\setminus Y)|$, there is a subset $D' \subseteq Y_i\setminus (G\cup G')$ with size $|D'|=|D|$.  By the definition of $Y_i$, all numbers in $D'$ are smaller than $D$. Since $\mathcal{G}$ is stable on $[n]\setminus X_i$, $F:=(G'\setminus D)\cup D'\in \mathcal{G}$. However $G \cap F=\emptyset$, a contradiction to the intersecting property of $\mathcal{G}$. So $G\cap G'\cap Y \ne\emptyset$.
\end{proof}

\begin{clm}\label{clm3.9}
$ |\mathcal{A}_1|\le 1$; $\mathcal{A}_2$ and $\mathcal{A}_4$ are intersecting.
\end{clm}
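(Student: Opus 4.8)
The plan is to derive all three statements from Claim \ref{clm3.8}. The claim about $\mathcal{A}_4$ will be immediate: if $|G\cap Y|=4=k$ then $G\subseteq Y$, so $G\cap Y=G$ and hence $\mathcal{A}_4=\widetilde{\mathcal{A}_4}\subseteq\mathcal{G}$; since $\mathcal{G}$ is intersecting, so is $\mathcal{A}_4$. The content is therefore in $|\mathcal{A}_1|\le1$ and in $\mathcal{A}_2$ being intersecting, and for both I would argue by contradiction via the contrapositive of Claim \ref{clm3.8}: whenever $G,G'\in\mathcal{G}$ have $G\cap G'\cap Y=\emptyset$, one must have $|Y_i\setminus(G\cup G')|<|G\cap G'\cap([n]\setminus Y)|$, and the task is to verify instead that $|Y_i\setminus(G\cup G')|\ge|G\cap G'\cap([n]\setminus Y)|$.

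For $|\mathcal{A}_1|\le1$: suppose $\{x\},\{x'\}\in\mathcal{A}_1$ with $x\ne x'$ and choose $G,G'\in\mathcal{G}$ with $G\cap Y=\{x\}$ and $G'\cap Y=\{x'\}$; then $G\cap G'\cap Y=\emptyset$, and since $|G|=|G'|=4$ we get $|G\cap G'\cap([n]\setminus Y)|\le 3$. For $1\le i\le5$, Lemma \ref{lem2.1} gives $\emptyset\ne G\cap X_i\subseteq G\cap Y=\{x\}$, so $x\in X_i$ and similarly $x'\in X_i$; hence $(G\cup G')\cap Y_i=\emptyset$ and $|Y_i\setminus(G\cup G')|=4\ge 3$, contradicting Claim \ref{clm3.8}. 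For $i=6$ we have $Y_6=Y$, so $(G\cup G')\cap Y_6=\{x,x'\}$ and $|Y_6\setminus(G\cup G')|=9-2=7\ge 3$, again a contradiction. For $\mathcal{A}_2$ the shape is identical: assume $A,A'\in\mathcal{A}_2$ with $A\cap A'=\emptyset$, pick $G,G'$ with $G\cap Y=A$ and $G'\cap Y=A'$, so that $|G\cap G'\cap([n]\setminus Y)|\le 2$; for $1\le i\le5$, $G\cap X_i\ne\emptyset$ forces $|A\cap X_i|\ge1$, hence $|G\cap Y_i|\le1$ and $|G'\cap Y_i|\le1$, giving $|Y_i\setminus(G\cup G')|\ge 4-2=2$; for $i=6$, $|Y_6\setminus(G\cup G')|=9-|A\cup A'|=5\ge 2$. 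Either way Claim \ref{clm3.8} yields $G\cap G'\cap Y\ne\emptyset$, a contradiction.

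I do not expect a serious obstacle, but the delicate point is that for $1\le i\le5$ the set $Y_i$ has only $4$ elements, so the crude bound $|Y_i|\ge|G\cap G'\cap([n]\setminus Y)|$ is not available on its own; the extra input that rescues it is the property from Lemma \ref{lem2.1} that every member of $\mathcal{G}$ meets $X_i$, which forces each of $G$ and $G'$ to place one of its (scarce) $Y$-elements inside $X_i$ rather than in $Y_i$, shrinking $(G\cup G')\cap Y_i$ to size $0$ in the $\mathcal{A}_1$ case and to size at most $2$ in the $\mathcal{A}_2$ case. For $i=6$, where no such $X_i$ is available, one instead uses that $|Y_6|=9$ is already large enough.
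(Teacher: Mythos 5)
Your proof is correct and follows essentially the same route as the paper: both parts with content are derived from Claim \ref{clm3.8} by bounding $|Y_i\setminus(G\cup G')|$ against $|G\cap G'\cap([n]\setminus Y)|$, using for $i\in[5]$ the property from Lemma \ref{lem2.1} that every member of $\mathcal{G}$ meets $X_i$ to shrink $(G\cup G')\cap Y_i$. You are slightly more careful than the paper in explicitly treating the $i=6$ case (where $X_6=\emptyset$ but $|Y_6|=9$ is large enough on its own), which the paper's proof of this particular claim passes over.
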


\begin{proof}
Obviously, $\mathcal{A}_4$ is intersecting. Assume that $|\mathcal{A}_1|\ge 2$ and $\{x_1\}, \{x_2\}\in \mathcal{A}_1$.  Then there are $G$ and $G'$ in $\widetilde{\mathcal{A}_1}$ such that $G\cap Y=\{x_1\}$ and $G'\cap Y=\{x_2\}$. Since any set in $\mathcal{G}$ intersects with $X_i$ (for $i\in [5]$), $x_1, x_2 \in X_i$. So $1\le |G\cap G'\cap ([n]\setminus Y)|\le 3< 4=|Y_i\setminus (G\cap G')|$. By Claim \ref{clm3.8}, $G\cap G'\cap Y\ne \emptyset$, a contradiction. Hence, $ |\mathcal{A}_1|\le 1$. Let $G$ and $G'$  be in $\widetilde{\mathcal{A}_2}$. Then $|G\cap G'\cap ([n]\setminus Y)|\le 2$. Since  $|G\cap X_i|\ge 1$ and $|G'\cap X_i|\ge 1$ (for $i\in [5]$), then $|Y_i\setminus (G\cup G')|\ge 2$. By Claim \ref{clm3.8}, $G\cap G'\cap Y\ne \emptyset$, that is $\mathcal{A}_2$ is intersecting, as desired.
\end{proof}

\begin{clm}
$\mathcal{A}_1=\emptyset$.
\end{clm}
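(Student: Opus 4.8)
The plan is to eliminate the only possibility left open by Claim~\ref{clm3.9}, namely $|\mathcal{A}_1|=1$, by showing that a singleton of $\mathcal{A}_1$ would force $\mathcal{G}$ to be a full star. So I would assume, for contradiction, that $\{x\}\in\mathcal{A}_1$ and fix a set $G\in\mathcal{G}$ with $G\cap Y=\{x\}$; since $k=4$ this means $G=\{x\}\cup D$ with $D:=G\setminus Y\subseteq[n]\setminus Y$ and $|D|=3$. The first observation is that $G$ meets $Y_i$ in at most $\{x\}$: for $i\in[5]$ Lemma~\ref{lem2.1} gives $G\cap X_i\neq\emptyset$, and as $X_i\subseteq Y$ and $|G\cap Y|=1$ the unique point of $G$ in $Y$ must be that point of $X_i$; hence $x\in X_i$ and $G\cap Y_i=\emptyset$. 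For $i=6$ we have $X_6=\emptyset$, so $Y_6=Y$ and $G\cap Y_6=\{x\}$, and the larger size $|Y_6|=9$ only makes the estimate below easier.

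Next I would check that the hypothesis of Claim~\ref{clm3.8} holds for $G$ together with an arbitrary $G'\in\mathcal{G}$. Put $j:=|G'\cap([n]\setminus Y)|$. For $i\in[5]$, from $|G'|=4$ and $|G'\cap X_i|\ge1$ we get $|G'\cap Y_i|\le 4-1-j=3-j$, and since $G\cap Y_i=\emptyset$ and $|Y_i|=4$ this gives $|Y_i\setminus(G\cup G')|\ge 4-(3-j)=1+j$; for $i=6$ one gets $|Y_6\setminus(G\cup G')|\ge 9-1-(4-j)=4+j$. In either case $|Y_i\setminus(G\cup G')|\ge 1+j$, while $|G\cap G'\cap([n]\setminus Y)|\le|G'\cap([n]\setminus Y)|=j<1+j$. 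Hence Claim~\ref{clm3.8} applies and yields $G\cap G'\cap Y\neq\emptyset$.

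Finally, since $G\cap Y=\{x\}$, the relation $G\cap G'\cap Y\neq\emptyset$ for every $G'\in\mathcal{G}$ means precisely that $x\in G'$ for every $G'\in\mathcal{G}$, i.e.\ $\mathcal{G}\subseteq\mathcal{F}_x$ is EKR, contradicting the standing hypothesis that $\mathcal{G}$ is not EKR; therefore $\mathcal{A}_1=\emptyset$. I do not expect a serious obstacle here: this is the $k=4$ analogue of the $k\ge5$ argument for part~(i) of Lemma~\ref{lem2.2}, with Claim~\ref{clm3.8} playing the role that part~(ii) played there. The only point requiring care is the bookkeeping of $Y_i$ versus $X_i$ (in particular deducing $x\in X_i$ so that $G\cap Y_i=\emptyset$), and verifying that the size estimate $|Y_i\setminus(G\cup G')|\ge 1+j$ is \emph{strictly} larger than $|G\cap G'\cap([n]\setminus Y)|$ so that Claim~\ref{clm3.8} can legitimately be invoked for every $G'\in\mathcal{G}$.
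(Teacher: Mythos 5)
Your proof is correct and follows essentially the same approach as the paper's: assume $\{x\}\in\mathcal{A}_1$ with $G\cap Y=\{x\}$, verify that the hypothesis of Claim~\ref{clm3.8} holds for $G$ paired with any $G'\in\mathcal{G}$, conclude $x\in G'$ for all $G'$, and contradict the non-EKR hypothesis. The paper states tersely that ``$G$ and $G'$ satisfy the condition of Claim~\ref{clm3.8}'' without further detail; you have merely spelled out the bookkeeping (deducing $x\in X_i$ hence $G\cap Y_i=\emptyset$ for $i\in[5]$, and the separate, easier count for $i=6$), which is exactly the verification the paper leaves implicit.
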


\begin{proof}
 By Claim \ref{clm3.9}, $\vert\mathcal{A}_1\vert \le 1$. We may assume on the contrary that $\mathcal{A}_1=\{\{x\}\} $ for some $x\in X_i$.  For any $G \in \widetilde{\mathcal{A}_1}$ and $G'\in \widetilde{\mathcal{A}_j}$ (for $j=2, 3, 4$), $G$ and $G'$ satisfy the condiction of Claim \ref{clm3.8}, so $G\cap G'\cap Y\ne \emptyset$, this implies that $x\in G'$ and hence $\mathcal{G}$ is EKR, a contradiction.
\end{proof}

\begin{clm}\label{clm3.10}
$\mathcal{A}_2$ and $\mathcal{A}_3$ are cross-intersecting.
\end{clm}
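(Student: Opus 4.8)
The plan is to reduce the assertion to the numerical criterion of Claim \ref{clm3.8}. Since $\mathcal{A}_2=\{G\cap Y: G\in\widetilde{\mathcal{A}_2}\}$ and $\mathcal{A}_3=\{G'\cap Y: G'\in\widetilde{\mathcal{A}_3}\}$, it suffices to prove that $G\cap G'\cap Y\ne\emptyset$ for every $G\in\widetilde{\mathcal{A}_2}$ and every $G'\in\widetilde{\mathcal{A}_3}$. By Claim \ref{clm3.8}, this follows as soon as one checks the inequality $|Y_i\setminus(G\cup G')|\ge |G\cap G'\cap([n]\setminus Y)|$.

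First I would bound the right-hand side from above. Since $k=4$, $|G'\cap Y|=3$ forces $|G'\cap([n]\setminus Y)|=1$, hence $|G\cap G'\cap([n]\setminus Y)|\le 1$. Next I would bound the left-hand side from below, treating $i\in[5]$ and $i=6$ separately. For $i\in[5]$ we have $|Y_i|=4$, and by Lemma \ref{lem2.1} every member of $\mathcal{G}$ meets $X_i$; thus $|G\cap X_i|\ge 1$ gives $|G\cap Y_i|\le |G\cap Y|-1=1$, and similarly $|G'\cap Y_i|\le |G'\cap Y|-1=2$, so $|Y_i\cap(G\cup G')|\le 3$ and $|Y_i\setminus(G\cup G')|\ge 1$. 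For $i=6$ we have $|Y_6|=9>5=|G\cap Y|+|G'\cap Y|\ge |Y_6\cap(G\cup G')|$, so again $|Y_6\setminus(G\cup G')|\ge 1$. In every case $|Y_i\setminus(G\cup G')|\ge 1\ge |G\cap G'\cap([n]\setminus Y)|$, so Claim \ref{clm3.8} yields $G\cap G'\cap Y\ne\emptyset$, which is exactly the desired cross-intersecting property.

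There is no real obstacle here: once Claim \ref{clm3.8} is in place the whole argument is a short cardinality count exploiting $|G\cap Y|+|G'\cap Y|=5$. The only point that needs attention is the use of the property ``every set of $\mathcal{G}$ meets $X_i$'' for $i\in[5]$ (inherited from Lemma \ref{lem2.1}) to force $|G\cap Y_i|\le 1$ and $|G'\cap Y_i|\le 2$; without it a $2$-set and a $3$-set could in principle occupy the whole $4$-element window $Y_i$, and the hypothesis of Claim \ref{clm3.8} would fail.
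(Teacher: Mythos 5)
Your proposal is correct and follows essentially the same route as the paper: reduce to Claim~\ref{clm3.8} by bounding $|G\cap G'\cap([n]\setminus Y)|\le 1$ (since the $\widetilde{\mathcal A}_3$ set has only one vertex outside $Y$) and using ``every member meets $X_i$'' to guarantee $|Y_i\setminus(G\cup G')|\ge 1$. You spell out the cardinality bookkeeping and the $i=6$ case, which the paper treats as trivial, but the argument is the same.
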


\begin{proof}
Let $G\in \widetilde{\mathcal{A}_2}$ and $G'\in \widetilde{\mathcal{A}_3}$.
Then $|G \cap G' \cap ([n] \setminus Y)|\le 1$.
Since any set in $\mathcal{G}$ intersects with $X_i$ (for $i \in [5]$), $|Y_i\setminus (G\cup G')|\ge 1$. By Claim \ref{clm3.8}, $G\cap G'\cap Y\ne \emptyset$, that is $\mathcal{A}_2$ and $\mathcal{A}_3$ are cross-intersecting, as desired.
\end{proof}

\begin{clm}\label{clm3.11}
$\mathcal{A}_3$ is intersecting.
\end{clm}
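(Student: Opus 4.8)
We work throughout in the case $k=4$, the only case in which Claim \ref{clm3.11} is needed; recall that then $|X_i|=5$ and $|Y_i|=4$ for $1\le i\le 5$, $X_6=\emptyset$, and $|Y|=9$. The plan is to fix two distinct members $A,A'\in\mathcal{A}_3$, write $A=G\cap Y$ and $A'=G'\cap Y$ with $G,G'\in\mathcal{G}$ (so $G\neq G'$), and prove $A\cap A'=G\cap G'\cap Y\neq\emptyset$. Since $|G\cap Y|=|G'\cap Y|=3$, each of $G,G'$ has a single point outside $Y$; write $G=(G\cap Y)\cup\{b\}$ and $G'=(G'\cap Y)\cup\{b'\}$ with $b,b'\in[n]\setminus Y$. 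If $b\neq b'$, then any common point of $G$ and $G'$ lies in $Y$ (it is neither $b$ nor $b'$), so $G\cap G'\subseteq Y$ and we are done because $\mathcal{G}$ is intersecting. So assume $b=b'$ and, for contradiction, $G\cap G'\cap Y=\emptyset$; then $G\cap G'=\{b\}$ and $|G\cap G'\cap([n]\setminus Y)|=1$.

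The next step is to apply Claim \ref{clm3.8}: if $|Y_i\setminus(G\cup G')|\ge 1$ then it already gives $G\cap G'\cap Y\neq\emptyset$, a contradiction, so $Y_i\subseteq G\cup G'$. As $|(G\cup G')\cap Y|\le|G\cap Y|+|G'\cap Y|=6<9$, this is impossible when $i=6$ (there $Y_6=Y$). For $1\le i\le 5$ we have $|Y_i|=4$, every member of $\mathcal{G}$ meets $X_i$ so $|G\cap Y_i|,|G'\cap Y_i|\le k-2=2$, and $G\cap Y_i$, $G'\cap Y_i$ are disjoint (since $G\cap G'\cap Y=\emptyset$); hence $|G\cap Y_i|=|G'\cap Y_i|=2$ and $|G\cap X_i|=|G'\cap X_i|=1$. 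Thus $G=\{a\}\cup P\cup\{b\}$ and $G'=\{a'\}\cup Q\cup\{b\}$, where $a,a'\in X_i$ are distinct (otherwise $a\in G\cap G'\cap Y$) and $\{P,Q\}$ is a partition of $Y_i$ into two pairs. This rigid picture is impossible for $i=5$, since Lemma \ref{case5} forces $|G\cap X_5|\ge 2$ for every $G\in\mathcal{G}$.

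It remains to rule out $i\in\{1,2,3,4\}$, and this is where the extra information carried by $X_i$ from Lemma \ref{lem2.1} enters (Lemmas \ref{case1}--\ref{case4}): in each case one first pins down $\{a,a'\}$ and then exhibits a member of $\mathcal{G}$ disjoint from $G$ or from $G'$, contradicting that $\mathcal{G}$ is intersecting. For $i=1$, since $G\cap\{x,y\}\neq\emptyset$ for every $G\in\mathcal{G}$, we get $\{a,a'\}=\{x,y\}$, and then whichever of $\{y,y',z,w\}$, $\{x,y',z,w\}$ Lemma \ref{case1} places in $\mathcal{G}$ is disjoint from the member of $\{G,G'\}$ containing $x$ (respectively $y$), because its other three points lie in $X_1$ and hence avoid $P\cup\{b\}$ and $Q\cup\{b\}$. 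For $i=2$, $G$ and $G'$ cannot equal the exceptional set $\{z_1,z_2,z_3,z'_4\}$ of Lemma \ref{case2} (they meet $X_2$ in only one point), so again $\{a,a'\}=\{x,y\}$; in the first subcase $\{z_1,z_2,z_3,y\}\in\mathcal{G}$ is disjoint from the member containing $x$, and in the second subcase stability of $\mathcal{G}$ on $[n]\setminus X_2$ (together with the fact that every element of $Y_2$ is smaller than every element of $[n]\setminus Y$) lets us replace the unique point of $\{z_1,z_2,z_3,z'_4\}$ outside $X_2$ by some $c\in Y_2=P\cup Q$, obtaining $\{z_1,z_2,z_3,c\}\in\mathcal{G}$, which is disjoint from $G'$ if $c\in P$ and from $G$ if $c\in Q$. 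For $i=3$ the rigid picture can only arise in case (c) of Lemma \ref{case3}, where every member of $\mathcal{G}$ missing both $x$ and $y$ contains $\{z_1,z_2,z_3\}$; since $|G\cap X_3|=1=|G'\cap X_3|$ this forces $a,a'\in\{x,y\}$, hence $\{a,a'\}=\{x,y\}$, and then the final clause of Lemma \ref{case3} yields $|G\cap G'|\ge 2$, contradicting $G\cap G'=\{b\}$. For $i=4$, since $\mathcal{G}$ contains every $4$-set through $\{x_1,x_2\}$ by (\ref{eq13}), the sets $\{x_1,x_2\}\cup Q$ and $\{x_1,x_2\}\cup P$ must meet $G$ and $G'$ respectively, so $\{a,a'\}=\{x_1,x_2\}$; then the distinguished set $\{x,y,x_1,x_3\}\in\mathcal{G}$ of Lemma \ref{case4} is disjoint from whichever of $G,G'$ contains $x_2$. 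In each instance we contradict the intersecting property of $\mathcal{G}$, which proves the claim.

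I expect this last paragraph to be the main obstacle: the argument is not uniform in $i$, and one must keep careful track of exactly which distinguished set Lemma \ref{lem2.1} produces in each of its sub-cases. The two most delicate points are the second subcase of $i=2$ and case (c) of $i=3$, where the distinguished set is not completely determined and one needs, respectively, an additional stability step or the implication ``a set missing $x$ and $y$ contains the kernel'' to finish. The preliminary reductions---the dichotomy $b\neq b'$ versus $b=b'$, the invocation of Claim \ref{clm3.8}, and the extraction of the rigid structure $G=\{a\}\cup P\cup\{b\}$, $G'=\{a'\}\cup Q\cup\{b\}$---are routine.
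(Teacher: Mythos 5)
Your proof is correct and follows essentially the same route as the paper's: after the dichotomy on $b$ versus $b'$ (which the paper folds into the one-line ``$|G\cap G'\cap([n]\setminus Y)|=1$''), you invoke Claim~\ref{clm3.8} to force $Y_i\subseteq G\cup G'$, extract the rigid structure $|G\cap X_i|=|G'\cap X_i|=1$, $|G\cap Y_i|=|G'\cap Y_i|=2$, and then run a case analysis against the auxiliary information from Lemmas~\ref{case1}--\ref{case5}. Two of your sub-cases are handled a bit differently from the paper, both valid: for Case~6 you observe that $|Y_6\setminus(G\cup G')|\ge 3$ so Claim~\ref{clm3.8} applies directly, whereas the paper reruns a separate minimality-plus-stability argument (which, as you noticed, is unnecessary since Claim~\ref{clm3.8} already covers $X_6=\emptyset$); and for Case~4 you pin down $\{a,a'\}=\{x_1,x_2\}$ via the full star on $\{x_1,x_2\}$ from (\ref{eq13}) and then use the distinguished set $\{x,y,x_1,x_3\}$, whereas the paper argues more directly that every member of $\mathcal{H}_4$, hence of $\mathcal{G}$, satisfies $|F\cap X_4|\ge 2$, immediately contradicting $|G\cap X_4|=1$. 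The paper's Case~4 argument is a little cleaner, but yours is sound.
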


\begin{proof}
 Assume on the contrary, that there exist $A$, $A'\in \mathcal{A}_3$ and $G$, $G' \in \widetilde{\mathcal{A}_3}$ such that $G\cap Y=A$, $G'\cap Y=A'$ and $A\cap A'=\emptyset$, in other words, $G\cap G'\cap Y=\emptyset$ and $|G\cap G'\cap ( [n]\setminus Y)|=1$. If $|(G\cup G')\cap Y_i|\le 3$, by Claim \ref{clm3.8}, $G\cap G'\cap Y\ne\emptyset$, a contradiction. Hence we only need to consider the following case : $|A\cap X_i|=1, |A\cap Y_i|=2, |A'\cap X_i|=1$ and $|A'\cap Y_i|=2$. Now we show the conclusion for each case of Lemma \ref{lem2.1}. All sets below are inherited from the proof of Lemma \ref{lem2.1} for each cooresponding case.


If we meet {\it Cases 1} in Lemma \ref{lem2.1}, then by Lemma \ref{case1}, we have that $X_1=\{x, y, y', z_1, w_1\}$ or $X_1=\{x, y, y', z_2, w_2\}$, and we may assume that $G\cap X_i=\{x\}$ and $G'\cap X_i=\{y\}$. Respectively,  $\{y, y', z_1, w_1\}$ or $\{x, y', z_2, w_2\}$  $\in \mathcal{G}$, which is disjoint from $G$ or $G'$. A contradiction to the intersecting property of $\mathcal{G}$.

If we meet {\it Cases 2}\,  in Lemma \ref{lem2.1}, then by Lemma \ref{case2}, we have that $X_2=\{x, y, z_1, z_2, z_3\}$, and either $\{z_1, z_2, z_3, y\}\in \mathcal{G}$ or $\{z_1, z_2, z_3, z_4'\}\in \mathcal{G}$  for some $y \neq z_4'$. Furthermore, if $\{z_1, z_2, z_3, y\}\in \mathcal{G}$, then every member in $\mathcal{G}$ contains $x$ or $y$. So we may assume that $G\cap X_i=\{x\}$ and $G'\cap X_i=\{y\}$.  Then
$\{z_1, z_2, z_3, y\}\cap G=\emptyset$, a contradiction. If $\{z_1, z_2, z_3, z'_4\}\in \mathcal{G}$, then every other member in $\mathcal{G}$ contains $x$ or $y$,  we may assume that $G\cap X_i=\{x\}$ and $G'\cap X_i=\{y\}$. Since $\mathcal{G}$ is stable, we may assume that
 $z'_4 \in Y_i$. Recall that $|G\cap Y_i|=|G'\cap Y_i|=2$, hence $\{z_1, z_2, z_3, z'_4\}$ must be disjoint from $G$ or $G'$,  a contradiction.

If we meet {\it Cases 3}\,  in Lemma \ref{lem2.1}, then by Lemma \ref{case3},  $|G\cap G'|\ge 2$, a contradiction.

If we meet {\it Cases 4}\,  in Lemma \ref{lem2.1}, then by Lemma \ref{case4}, we have that $X_4=\{x, y, x_1, x_2, x_3\}$, $\{x, y, x_1, x_3\}\in \mathcal{G}$ and $S_{xy}(\mathcal{H}_4)\subseteq\mathcal{G}_2$ with core $\{x, x_1, x_2\}$. So for every set $F$ in $\mathcal{H}_4$, either $|F\cap \{x, x_1, x_2\}|\ge 2$, or $F\cap \{x, x_1, x_2\}=\{x_1\}$ and $y\in F$, or $F\cap \{x, x_1, x_2\}=\{x_2\}$ and $y\in F$. In all cases, $\vert F\cap X_4\vert\ge 2$. Performing shifts in $[n]\setminus X_4$ will not change these properties, hence every set in $\mathcal{G}$ also has the same properties, in particular, $G$ and $G'$ do. This makes a contradiction to $\vert G\cap X_4\vert=\vert G'\cap X_4\vert=1$.

Assume that we meet {\it Case 5} in Lemma \ref{lem2.1},  then by Lemma \ref{case5}, we have that  $\vert G\cap X_5\vert\ge 2$ for each $G\in  \mathcal{G}$. This makes a contradiction to $\vert G\cap X_5\vert=\vert G'\cap X_5\vert =1$.

 At last, assume that we will not  meet any of  Cases 1-5 in Lemma \ref{lem2.1} if we perform shifts repeatedly to $\mathcal{G}$. In this case,  $Y=[2k]$.
 Assume on the contrary, and let $G$ and $G'\in \mathcal{G}$ such that $G\cap G' \cap Y= \emptyset $ and $|G\cap G'|$ is the minimum among all pairs of sets in $\mathcal{G}$ not intersecting in $Y$. Then $|G\cap G' \cap (X\setminus Y)|\ge1$. Consequently, $| (G\cup G' )\cap Y|\le |G\cap Y|+|G'\cap Y|\le 2k-2$ since $|G\cap Y|\le k-1$ and $|G'\cap Y|\le k-1$. So there exists a point $a\in Y\setminus (G\cup G' )$. Pick any point $b\in G\cap G' \cap (X\setminus Y)$. Note that $a<b$, then $G'':=(G'\setminus \{b\})\cup \{a\}\in \mathcal{G}$ since $\mathcal{G}$ is stable. It is easy to see that $G\cap G'' \cap Y= \emptyset$ and $|G\cap G''|<|G\cap G'|$, contradicting the minimality of $|G\cap G'|$.
\end{proof}

 Since $\mathcal{G}$ is intersecting, $\mathcal{A}_2$ and $\mathcal{A}_4$ are cross-intersecting, and $\mathcal{A}_3$ and $\mathcal{A}_4$ are cross-intersecting. Combining with Claims \ref{clm3.9}, \ref{clm3.10} and \ref{clm3.11}, we have completed the proof of (ii).
\end{proof}

\section{Concluding remarks}
It is natural to ask  what is the maximum size of a $k$-uniform intersecting family $\mathcal{F}$ with $\tau(\mathcal{F})\ge 3$. About this problem,  Frankl \cite{Fra1980} gave an upper bound for sufficient large $n$. To introduce the result, we need the following construction.

\begin{construction}
Let $x\in [n]$, $Y\subseteq [n]$ with $|Y|=k$, and $Z\subseteq [n]$ with $|Z|=k-1$, $x\not\in Y\cup Z$, $Z\cap Y=\emptyset$ and $Y_0=\{y_1, y_2\}\subseteq Y$. Define
\begin{gather*}
\mathcal{G}=\{A\subseteq [n]: x\in A, A\cap Y\ne \emptyset \,\,\text{and}\,\, A\cap Z\ne \emptyset\}\cup\{Y, Z\cup \{y_1\}, Z\cup \{y_2\}, \{x, y_1, y_2 \}\},\\
FP(n, k)=\{F\subseteq [n]: |F|=k, \exists G\in \mathcal{G} \,s.t., G\subseteq F\}.
\end{gather*}
\end{construction}
It is easy to see that $FP(n, k)$ is intersecting and $\tau(FP(n, k))=3$.

\begin{theorem}[Frankl \cite{Fra1980}]
Let $k\ge 3$ and $n$ be sufficiently large integers. Let  $\mathcal{H}$ be an $n$-vertex $k$-uniform family with $\tau(\mathcal{H})\ge 3$. Then $|\mathcal{H}|\le |FP(n, k)|$. Moreover, for $k\ge 4$, the equality holds only for $\mathcal{H}=FP(n, k)$.˙
\end{theorem}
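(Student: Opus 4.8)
The strategy is to \emph{localize} $\mathcal H$ around a high-degree vertex and then estimate, separately, how many edges pass through it and how many avoid it. Let $x$ be a vertex of maximum degree $\Delta=d_{\mathcal H}(x)$ and put $\mathcal G:=\{F\in\mathcal H:x\notin F\}$, so $|\mathcal H|=\Delta+|\mathcal G|$. If some single vertex covered $\mathcal G$, then together with $x$ it would form a cover of $\mathcal H$ of size $2$, contradicting $\tau(\mathcal H)\ge 3$; hence $\tau(\mathcal G)\ge 2$. Since $\mathcal G$ is intersecting with $\tau(\mathcal G)\ge 2$ it is not contained in a single star, and a standard argument produces three edges $B_1,B_2,B_3\in\mathcal G$ with $B_1\cap B_2\cap B_3=\emptyset$ (a ``triangle''-type witness of $\tau\ge 2$). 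Because $\mathcal H$ is intersecting, every edge through $x$ meets each $B_i$, so $\Delta\le N(B_1,B_2,B_3)$, where $N(B_1,B_2,B_3)$ is the number of $(k-1)$-subsets of $[n]\setminus\{x\}$ meeting all three of $B_1,B_2,B_3$.

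Next I would evaluate $N(B_1,B_2,B_3)$ by inclusion--exclusion over which of $B_1,B_2,B_3$ a $(k-1)$-set may miss. A short computation shows that the coefficients of $n^{k-1}$ and of $n^{k-2}$ both vanish --- the second one precisely because $B_1\cap B_2\cap B_3=\emptyset$ --- so $N(B_1,B_2,B_3)=\Theta(n^{k-3})$ with leading coefficient an explicit quadratic in the pairwise intersection sizes $|B_i\cap B_j|$. Maximizing this over all triples of $k$-sets that pairwise intersect (forced by $\mathcal G$ being intersecting) and have empty common intersection, one finds that for $k\ge 4$ the unique optimum is the configuration $B_1=Y$, $\{B_2,B_3\}=\{Z\cup\{y_1\},Z\cup\{y_2\}\}$ of the construction $FP(n,k)$ (with $|Y\cap(Z\cup\{y_i\})|=1$ and $|(Z\cup\{y_1\})\cap(Z\cup\{y_2\})|=k-1$), and that for this configuration $N(B_1,B_2,B_3)$ equals exactly the ``star part'' $|FP(n,k)|-3$ --- because a $(k-1)$-set meeting $Y$, $Z\cup\{y_1\}$ and $Z\cup\{y_2\}$ is the same thing as a $k$-set through $x$ that either meets both $Y$ and $Z$ or contains $\{y_1,y_2\}$. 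Hence $\Delta\le |FP(n,k)|-3$.

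It remains to bound $|\mathcal G|$. Since $x$ has maximum degree, every vertex lies in at most $\Delta=O(n^{k-3})$ edges of $\mathcal H$, hence of $\mathcal G$; as $\mathcal G$ is intersecting it is covered by any one of its edges, so $|\mathcal G|=O(n^{k-3})$. The finer, ``rigidity'' step is what is really needed: when $\Delta$ is near its maximum the link of $x$ contains essentially all $(k-1)$-transversals of $B_1,B_2,B_3$, and cross-intersection with this rich family forces every edge of $\mathcal G$ to be one of $B_1,B_2,B_3$, whence $|\mathcal G|\le 3$. Combining, $|\mathcal H|=\Delta+|\mathcal G|\le |FP(n,k)|$. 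For the equality case one reverses all inequalities: the bound on $\Delta$ forces $B_1,B_2,B_3$ into the $FP$-configuration (one recovers $Y,Z,y_1,y_2$ from the intersection pattern above) and forces the link of $x$ to be exactly those transversals, and the bound on $|\mathcal G|$ forces $\mathcal G=\{Y,Z\cup\{y_1\},Z\cup\{y_2\}\}$; reassembling gives $\mathcal H\cong FP(n,k)$. When $k=3$ the set $\{x,y_1,y_2\}$ is itself an edge and the optimization above acquires a second maximizer (a $\mathcal T(n,3)$-type family), which is why uniqueness is claimed only for $k\ge 4$.

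The step I expect to be the main obstacle is the quantitative one: proving that the $FP$-configuration is the \emph{unique} maximizer of $N(B_1,B_2,B_3)$ for $k\ge 4$ while tracking all terms down to order $n^{k-3}$ --- including the edges through $x$ that contain $\{y_1,y_2\}$ and the three exceptional edges of $\mathcal G$ --- precisely enough to get the exact inequality $|\mathcal H|\le|FP(n,k)|$ and the equality characterization. This is exactly where ``$n$ sufficiently large'' is used and where the cases $k=3$ and $k\ge 4$ genuinely diverge; a secondary technical point is making the ``$\tau(\mathcal G)\ge2\Rightarrow$ three edges with empty common intersection'' and the rigidity/cross-intersection arguments fully rigorous.
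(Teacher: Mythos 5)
First, a framing note: the paper does not prove this theorem; it is quoted in the concluding remarks from Frankl~\cite{Fra1980} as motivation for a further question, so there is no proof in this paper to compare against. I therefore evaluate your proposal on its own merits.

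The overall template---bound the degree of a maximum-degree vertex $x$ by counting $(k-1)$-transversals of the edges missing $x$, and separately show the off-star part is tiny---is indeed the standard one for results of this Frankl/Hilton--Milner type, and your inclusion--exclusion computation showing that the $n^{k-1}$ coefficient vanishes and the $n^{k-2}$ coefficient is proportional to $|B_1\cap B_2\cap B_3|$ is correct. However, there is a genuine gap at the pivotal step: you assert that $\tau(\mathcal G)\ge 2$ supplies three edges $B_1,B_2,B_3\in\mathcal G$ with $B_1\cap B_2\cap B_3=\emptyset$, and call this ``a standard argument.'' That implication is false. Take $\mathcal G=\binom{[k+1]}{k}$: this is intersecting, $\tau(\mathcal G)=2$ (so it is not a star), yet any three of its $k$-sets miss at most three points of $[k+1]$, hence have a common element of size at least $k-2\ge 1$ whenever $k\ge 3$. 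Moreover this family can genuinely occur as $\{F\in\mathcal H:x\notin F\}$ for an $\mathcal H$ with $\tau(\mathcal H)\ge 3$. In general, the smallest number of edges of a $\tau\ge 2$ intersecting $k$-uniform family with empty common intersection can be as large as $k+1$, so your bound $\Delta\le N(B_1,B_2,B_3)$ and the ensuing optimization over triples simply do not cover all cases. To make the argument work you either have to run the inclusion--exclusion over a variable-size collection of edges (which changes the optimization problem substantially), or treat the ``sunflower-free'' degenerate configurations like $\binom{[k+1]}{k}$ by a separate, direct count and verify they give strictly fewer transversals than the $FP$ configuration.

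Beyond this, the two steps you yourself flag as expected obstacles---(i) proving the $FP$ configuration is the unique maximizer of the transversal count among all admissible edge collections, and (ii) the rigidity argument forcing $|\mathcal G|\le 3$ near the extremum---are indeed where the real work lies and are not carried out. As written the proposal is a plausible outline of a Frankl-style proof, but it has one concrete false step and leaves the quantitatively hard parts unproven.
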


It is interesting  to consider what is the maximum  $k$-uniform intersecting families with covering number  $s\ge 4$.

\section{Acknowledgements}
This research is supported by  National natural science foundation of China (Grant No. 11931002).
\frenchspacing

\end{document}